\documentclass[final,notitlepage,12pt,reqno]{amsart}

\UseRawInputEncoding
\usepackage{graphicx,extarrows}
\usepackage{calc}
\usepackage{url}
\newlength{\depthofsumsign}
\newcommand{\nsum}[1][1.0]{
   \mathop{%
        \raisebox
            {-#1\depthofsumsign+1\depthofsumsign}
            {\scalebox
                {#1}
                {$\displaystyle\sum$}%
            }
    }
}

\makeatletter
\let\I\@undefined
\makeatother

\usepackage{geometry}
\usepackage{indentfirst}
\usepackage[normalem]{ulem}
\usepackage{float}
\usepackage{amsthm}

\usepackage{enumitem}[2011/09/28]
\setenumerate{align=left, leftmargin=0pt,labelsep=.5em, labelindent=0\parindent,listparindent=\parindent,itemindent=*}
\usepackage{array}
\usepackage[all]{xy}

\usepackage{exscale,relsize}
\usepackage{multirow}

\usepackage{caption}[2012/02/19]

\usepackage{longtable,lscape}

\usepackage{colonequals}

\usepackage{amssymb,bm,amsmath}
\usepackage{amsfonts}

\usepackage[OT2,T1,T2A]{fontenc}

\usepackage[french,german,romanian,russian,english]{babel}
\usepackage{appendix}

\newcommand{\qoppa}{\kern-.1em\rotatebox{180}{\raisebox{-0.42em}{$'$}}\kern-.2em{o}}

\DeclareMathOperator{\ELi}{ELi}

\DeclareMathOperator{\Li}{Li}

\DeclareMathOperator{\am}{am}

\DeclareMathOperator{\D}{d}
\DeclareMathOperator{\I}{Im}
\DeclareMathOperator{\RE}{Re}

\def\eor{\hfill$ \square$}

\newcolumntype{L}{>{$}l<{$}}
\newcolumntype{C}{>{$}c<{$}}
\newcolumntype{R}{>{$}r<{$}}

\theoremstyle{plain}
\newtheorem{theorem}{Theorem}[section]
\newtheorem{proposition}[theorem]{Proposition}
\newtheorem{lemma}[theorem]{Lemma}

\newenvironment{remark}[1][Remark]{\begin{trivlist}
\item[\hskip \labelsep {\bfseries #1}]}{\end{trivlist}}

\theoremstyle{definition}

\numberwithin{equation}{section}

\usepackage{Baskervaldx}
\usepackage[baskervaldx]{newtxmath}
\usepackage[cal=cm,scr=rsfs,frak=euler]{mathalfa}

\usepackage[counter,user]{zref}
\usepackage{refcount}
\makeatletter

\zref@newprop{AAAX}{\the\value{\zref@getcurrent{counter}}}%

\newcommand\AAAXlabel[1]{%
  \zref@labelbyprops{#1}{AAAX}%
  \label{#1}%
}

\zref@newprop{BBBX}{\the\value{\zref@getcurrent{counter}}}%

\newcommand\BBBXlabel[1]{%
  \zref@labelbyprops{#1}{BBBX}%
  \label{#1}%
}

\zref@newprop{AAAY}{\the\value{\zref@getcurrent{counter}}}%

\newcommand\AAAYlabel[1]{%
  \zref@labelbyprops{#1}{AAAY}%
  \label{#1}%
}

\zref@newprop{BBBY}{\the\value{\zref@getcurrent{counter}}}%

\newcommand\BBBYlabel[1]{%
  \zref@labelbyprops{#1}{BBBY}%
  \label{#1}%
}

\zref@newprop{AAAA}{\the\value{\zref@getcurrent{counter}}}%

\newcommand\AAAAlabel[1]{%
  \zref@labelbyprops{#1}{AAAA}%
  \label{#1}%
}

\zref@newprop{BBBB}{\the\value{\zref@getcurrent{counter}}}%

\newcommand\BBBBlabel[1]{%
  \zref@labelbyprops{#1}{BBBB}%
  \label{#1}%
}

\zref@newprop{AAA}{\the\value{\zref@getcurrent{counter}}}%

\newcommand\AAAlabel[1]{%
  \zref@labelbyprops{#1}{AAA}%
  \label{#1}%
}

\zref@newprop{BBB}{\the\value{\zref@getcurrent{counter}}}%

\newcommand\BBBlabel[1]{%
  \zref@labelbyprops{#1}{BBB}%
  \label{#1}%
}

\zref@newprop{AA}{\the\value{\zref@getcurrent{counter}}}%

\newcommand\AAlabel[1]{%
  \zref@labelbyprops{#1}{AA}%
  \label{#1}%
}

\zref@newprop{BB}{\the\value{\zref@getcurrent{counter}}}%

\newcommand\BBlabel[1]{%
  \zref@labelbyprops{#1}{BB}%
  \label{#1}%
}

\zref@newprop{CC}{\the\value{\zref@getcurrent{counter}}}%

\newcommand\CClabel[1]{%
  \zref@labelbyprops{#1}{CC}%
  \label{#1}%
}

\zref@newprop{DD}{\the\value{\zref@getcurrent{counter}}}%

\newcommand\DDlabel[1]{%
  \zref@labelbyprops{#1}{DD}%
  \label{#1}%
}

\zref@newprop{EE}{\the\value{\zref@getcurrent{counter}}}%

\newcommand\EElabel[1]{%
  \zref@labelbyprops{#1}{EE}%
  \label{#1}%
}

\zref@newprop{SS}{\the\value{\zref@getcurrent{counter}}}%

\newcommand\SSlabel[1]{%
  \zref@labelbyprops{#1}{SS}%
  \label{#1}%
}

\zref@newprop{SSO}{\the\value{\zref@getcurrent{counter}}}%

\newcommand\SSOlabel[1]{%
  \zref@labelbyprops{#1}{SSO}%
  \label{#1}%
}

\zref@newprop{SSSO}{\the\value{\zref@getcurrent{counter}}}%

\newcommand\SSSOlabel[1]{%
  \zref@labelbyprops{#1}{SSSO}%
  \label{#1}%
}

\zref@newprop{HHHA}{\the\value{\zref@getcurrent{counter}}}%

\newcommand\HHHAlabel[1]{%
  \zref@labelbyprops{#1}{HHHA}%
  \label{#1}%
}

\zref@newprop{HHHB}{\the\value{\zref@getcurrent{counter}}}%

\newcommand\HHHBlabel[1]{%
  \zref@labelbyprops{#1}{HHHB}%
  \label{#1}%
}

\makeatother
\usepackage[scr=rsfs]{mathalfa}
\DeclareMathAlphabet{\mathsf}{OT1}{\sfdefault}{m}{n}
\SetMathAlphabet{\mathsf}{bold}{OT1}{\sfdefault}{m}{n}
\DeclareSymbolFontAlphabet{\mathbb}{AMSb}

\usepackage{color}

\def\bg{\bigg}
\def\({\bg(}
\def\){\bg)}

\begin{document}

\pagenumbering{roman}
\selectlanguage{english}
\title[Series involving  binomial coefficients  and harmonic numbers]{Series involving central binomial coefficients \\ and higher-order harmonic numbers}
\author{Zhi-Wei Sun}\address[Z.-W.\ Sun]{School of Mathematics, Nanjing
University, Nanjing 210093, People's Republic of China}
\email{{\tt zwsun@nju.edu.cn}
\newline\indent
{\it Homepage}: {\tt http://maths.nju.edu.cn/\lower0.5ex\hbox{\~{}}zwsun}}
 \author{Yajun Zhou}
\address[Y. Zhou]{Program in Applied and Computational Mathematics (PACM), Princeton University, Princeton, NJ 08544} \email{yajunz@math.princeton.edu}\curraddr{\textrm{} \textsc{Academy of Advanced Interdisciplinary Studies (AAIS), Peking University, Beijing 100871, P. R. China}}\email{yajun.zhou.1982@pku.edu.cn}
\date{\today}\thanks{\textit{Keywords}:  Binomial coefficients, harmonic numbers, modular forms, Epstein zeta functions, special $L$-values\\\indent\textit{MSC 2020}: 11B65, 11F67, 11M06; 05A19\\\indent * Z.-W. Sun was supported by the Natural Science Foundation of China (grant no.\ 12371004). Y. Zhou was supported in part  by the Applied Mathematics Program within the Department of Energy
(DOE) Office of Advanced Scientific Computing Research (ASCR) as part of the Collaboratory on
Mathematics for Mesoscopic Modeling of Materials (CM4)}

\begin{abstract}We derive  modular  parametrizations for certain infinite series whose summands involve central binomial coefficients and higher-order harmonic numbers. When the  rates of  convergence  are certain rational numbers, modularity allows us to reduce the corresponding series to special values of the Dirichlet  $L$-functions.
For example, we establish the following identities  conjectured by Sun:\[\sum_{k=0}^\infty\binom{2k}{k}^3\left[ \mathsf H_{2k}^{(2)}-\frac{25}{92}\mathsf H_{ k}^{(2)} +\frac{735L_{-7}(2)-86\pi^{2}}{1104}\right]\frac{1}{4096^{k}}=0,\]\[\sum_{k=0}^\infty\binom{2k}k^3\left[\mathsf H_{2k}^{(3)}-\frac{43}{352}\mathsf H_k^{(3)}\right]\frac{42k+5}{4096^k}=\frac{555\zeta(3)}{77\pi}-\frac{32G}{11},\]    where $ \mathsf H^{(r)}_k\colonequals \sum_{0<n\leq k}\frac{1}{n^r}$, $ L_{-7}(2)\colonequals \sum_{n=1}^\infty\left(\frac{-7}{n}\right)\frac{1}{n^2}=\frac{1}{1^2}+\frac{1}{2^2}-\frac{1}{3^2}+\frac{1}{4^{2}}-\frac{1}{5^{2}}-\frac{1}{6^{2}}+\frac{1}{8^{2}}+\cdots $, $ G\colonequals \sum_{n=0}^\infty\frac{(-1)^n}{(2n+1)^2}$, and $ \zeta(3)\colonequals \sum_{n=1}^\infty\frac1{n^3}$.  \end{abstract}

\maketitle
\pagenumbering{arabic}

\section{Introduction}The four known classical rational Ramanujan-type series (see \cite[Chapter 14]{Cooper2017Theta}) involving cubes of central binomial coefficients $ \binom{2k}k\colonequals \frac{(2k)!}{(k!)^2}$
 are {\allowdisplaybreaks
\begin{align}\sum_{k=0}^\infty\binom{2k}k^3\frac{4k+1}{(-64)^k}&=\frac2{\pi},\label{eq:Rama1}
\\\sum_{k=0}^\infty\binom{2k}k^3\frac{6k+1}{256^k}&=\frac 4{\pi},\label{eq:Rama2}
\\\sum_{k=0}^\infty\binom{2k}k^3\frac{6k+1}{(-512)^k}&=\frac{2\sqrt2}{\pi},
\\\sum_{k=0}^\infty\binom{2k}k^3\frac{42k+5}{4096^k}&=\frac{16}{\pi}.\label{eq:Rama4}
\end{align}}Note that \eqref{eq:Rama1} was first obtained by Bauer \cite{Bauer1859}, and \eqref{eq:Rama2}--\eqref{eq:Rama4} were discovered by Ramanujan \cite{Ramanujan1914}. Usually, 
one proves such identities using the theory of modular forms (see, for example, \cite{HoeijTsaiYe2024,RogersStraub2013,WangYang2022}).
The identities \eqref{eq:Rama1}--\eqref{eq:Rama4} have the following variants involving second-order harmonic numbers $ \mathsf H^{(2)}_k\colonequals \sum_{0<n\leq k}\frac{1}{n^2}$:
{\allowdisplaybreaks\begin{align}\label{eq:-64H2}
\sum_{k=0}^\infty\binom{2k}k^3\left[\mathsf H_{2k}^{(2)}-\frac12\mathsf H_k^{(2)}\right]\frac{4k+1}{(-64)^k}&=-\frac{\pi}{12},
\\\label{eq:256H2}\sum_{k=0}^\infty\binom{2k}k^3\left[\mathsf H_{2k}^{(2)}-\frac5{16}\mathsf H_k^{(2)}\right]\frac{6k+1}{256^k}&=\frac{\pi}{12},
\\\label{eq:-512H2}\sum_{k=0}^\infty\binom{2k}k^3\left[\mathsf H_{2k}^{(2)}-\frac5{16}\mathsf H_k^{(2)}\right]
\frac{6k+1}{(-512)^k}&=-\frac{\sqrt{2}\pi}{48},
\\\label{eq:4096H2}\sum_{k=0}^\infty\binom{2k}k^3\left[\mathsf H_{2k}^{(2)}-\frac{25}{92}\mathsf H_k^{(2)}\right]\frac{42k+5}{4096^k}&
=\frac{2\pi}{69}.
\end{align}}Here is a brief history of the last four displayed formulae:
Wei and Ruan    \cite{WeiRuan2024} established \eqref{eq:-64H2};
Guo and Lian
\cite{GuoLian2021}  had conjectured \eqref{eq:256H2} and \eqref{eq:-512H2}  before both formulae were confirmed by Wei \cite{Wei2023}; Sun \cite[(107)]{Sun2022}  had proposed \eqref{eq:4096H2} before it was  verified by Wei \cite{Wei2023c}.

Motivated by the identities \eqref{eq:-64H2}--\eqref{eq:4096H2}, the first-named author has
recently
conjectured  the following four identities (cf.\ \cite{MOpost}):
{\allowdisplaybreaks\begin{align}\sum_{k=0}^\infty\binom{2k}k^3
\left[\mathsf H_{2k}^{(2)}-\frac1{2}\mathsf H_k^{(2)}+2L_{-8}(2)-\frac{5\pi^2}{24}\right]\frac{1}{(-64)^k}&=0,\label{eq:Sun1}
\\\sum_{k=0}^\infty\binom{2k}k^3\left[\mathsf H_{2k}^{(2)}-\frac5{16}\mathsf H_k^{(2)}
+\frac{135L_{-3}(2)-11\pi^2}{96}\right]\frac{1}{256^k}&=0,\label{eq:Sun2}
\\\sum_{k=0}^\infty \binom{2k}k^3\left[\mathsf H_{2k}^{(2)}-\frac5{16}\mathsf H_k^{(2)}
+\frac{120L_{-4}(2)-11\pi^2}{96}\right]\frac{1}{(-512)^k}&=0,
\\\sum_{k=0}^\infty\binom{2k}k^3\left[\mathsf H_{2k}^{(2)}-\frac{25}{92}\mathsf H_k^{(2)}+\frac{735L_{-7}(2)
-86\pi^2}{1104}\right]\frac{1}{4096^k}&=0.\label{eq:Sun4}\end{align}}Here, the Dirichlet $L$-function\begin{align}
L_d(s)\colonequals \sum_{n=1}^\infty \left(\frac{d}{n}\right)\frac{1}{n^s} ,\quad \RE s>1
\end{align}is defined via the Kronecker symbol $\big(\frac d{\cdot}\big)$.
 The Riemann zeta function\begin{align}
\zeta(s)\colonequals \sum_{n=1}^\infty\frac{1}{n^s},\quad \RE s>1\label{eq:zeta_defn}
\end{align}is the same as $ L_1(s)$.

To attack \eqref{eq:Sun1}--\eqref{eq:Sun4} and to revisit \eqref{eq:-64H2}--\eqref{eq:4096H2}  in \S\ref{sec:H2}, we give modular pa\-ra\-me\-tri\-za\-tions for our series of interest. We will need the Dedekind eta function $ \eta(z)\colonequals e^{\pi iz/12}\prod_{n=1}^\infty(1-e^{2\pi inz})$ for $z\in\mathfrak H\colonequals \{w\in\mathbb C|\I w>0\}$, from which we may define  the modular lambda function\begin{align}
\lambda(2z)\equiv\alpha_{4}(z)\colonequals\frac{2^{4}[\eta(z)]^8[\eta(4z)]^{16}}{[\eta(2z)]^{24}}=1-\alpha_4\left( -\frac{1}{4z} \right)=1-\lambda\left( -\frac{1}{2z} \right)\label{eq:lambda_defn}
\end{align}  and  the weight-4 holomorphic Eisenstein series \begin{align}
E_{4}(z)\colonequals {}&\frac{[\eta(z)]^{40}\{1-\lambda(z)+[\lambda(z)]^2\}}{[\eta(2z)\eta(z/2)]^{16}}=\frac{1}{2\zeta(4)}\sum_{\substack{m,n\in\mathbb Z\\m^2+n^2\neq0}}\frac{1}{(mz+n)^{4}}=1+240\nsum\limits_{n=1}^\infty\dfrac{n^3e^{2\pi inz}}{1-e^{2\pi inz}}
\label{eq:E4defn}
\end{align} for $ z\in\mathfrak H$.
In addition, we will avail ourselves of the Eichler integral\begin{align}
\mathscr
E_4(z)\colonequals\int_{z}^{i\infty} [1-E_4(w)](w-z)^2\D w,\label{eq:EichlerE4defn}
\end{align}the real-analytic Eisenstein series (also known as the Epstein zeta function)\begin{align}
E(z,s)\colonequals \frac{1}{2\zeta(2s)}\sum_{\substack{m,n\in\mathbb Z\\m^2+n^2\neq0}}\frac{(\I z)^s}{|mz+n|^{2s}}=E(z_{_{}}+1,s)=E\left( -\frac{1}{z} ,s\right),\label{eq:EZF_defn}
\end{align}the weight-2 non-holomorphic Eisenstein series  \begin{align}
E_{2}(z)\colonequals\frac{12}{\pi i\eta(z)}\frac{\partial\eta(z)}{\partial z}-\frac{3}{\pi\I z}=1-{\frac{3}{\pi\I z}-24\sum_{n=1}^\infty\frac{ne^{2\pi inz}}{1-e^{2\pi inz}} } ,
\end{align} and the Legendre--Ramanujan function  (cf.\ \cite[(2.2.21)]{AGF_PartI})
\begin{align}
R_{-1/2}(1-2\alpha_4(z))={}&-\frac{2^{4}[E_{2}(4z)]^2-2^{4}E_{4}(4 z)-[E_{2}(z)]^2+E_{4}(z)}{2[4E_{2}(4z)-E_{2}(z)]^{2}}.
\end{align}Whenever a point $ z$ in the upper-half plane $ \mathfrak H$ is a quadratic irrational number (namely, the  minimal polynomial of $z$  has degree $2$), both $ \alpha_4(z)$ and $ R_{-1/2}(1-2\alpha_4(z))$ are explicitly computable algebraic numbers  \cite[\S6]{Zagier2008Mod123}.
\begin{theorem}\label{thm:H2}\begin{enumerate}[leftmargin=*,  label=\emph{(\alph*)},ref=(\alph*),
widest=d, align=left] \item
If  we have either $ 2z/i\geq1$, or $ \RE z=\frac12$ and $ \I z\geq\frac{1}{\sqrt{2}}$,  then {\allowdisplaybreaks
\begin{align}\begin{split}\mathscr Q_1(z)\colonequals {}&
\frac{\displaystyle \sum_{k=0}^\infty\binom{2k}{k}^3\left[ \mathsf H_{2k}^{(2)}-\frac{1}{4}\mathsf H_{ k}^{(2)} \right]\left\{\frac{\alpha_{4}(z)[1-\alpha_{4}(z)]}{2^{4}}\right\}^k}{\displaystyle  \sum_{k=0}^\infty\binom{2k}{k}^3\left\{\frac{\alpha_{4}(z)[1-\alpha_{4}(z)]}{2^{4}}\right\}^k}\\={}&\frac{7\zeta(3)}{4\pi\I z }-\frac{\pi^2\left[4 E \big(z+\frac12,2\big)-E(2z,2)\right]}{90}-\frac{\pi^2 i\left[8\mathscr  E_{4} \big(z+\frac12\big)-\mathscr  E_{4}(2z)\right]}{120\I z}
\end{split}\label{eq:Q1}\intertext{and}\begin{split}\mathscr Q_2(z)\colonequals {}&\frac{\displaystyle \sum_{k=0}^\infty\binom{2k}{k}^3\mathsf H_{ k}^{(2)}\left\{\frac{\alpha_{4}(z)[1-\alpha_{4}(z)]}{2^{4}}\right\}^k}{\displaystyle  \sum_{k=0}^\infty\binom{2k}{k}^3\left\{\frac{\alpha_{4}(z)[1-\alpha_{4}(z)]}{2^{4}}\right\}^k}\\={}&-\frac{2\pi^2}{3}(\I z)^2-\frac{2\zeta(3)}{\pi\I z }-\frac{2\pi^2\left[ E \big(z+\frac12,2\big)-4E(2z,2)\right]}{45}\\&{}-\frac{\pi^2 i\left[\mathscr  E_{4} \big(z+\frac12\big)-2\mathscr  E_{4}(2z)\right]}{15\I z}.
\end{split}\label{eq:Q2}
\end{align}
}
\item If  we have either $ 2z/i\geq1$, or $ \RE z=\frac12$ and $ \I z\geq\frac{1}{\sqrt{2}}$,  then{\allowdisplaybreaks
\begin{align}
\begin{split}&\mathscr R_1(z)\\\colonequals {}&\sum_{k=0}^\infty\binom{2k}{k}^3\left[ \mathsf H_{2k}^{(2)}-\frac{1}{4}\mathsf H_{ k}^{(2)} \right]\frac{2[1-2\alpha_4(z)]k+ R_{-1/2}(1-2\alpha_4(z))}{\I z}\left\{\frac{\alpha_{4}(z)[1-\alpha_{4}(z)]}{2^{4}}\right\}^k
\\={}&\frac{\mathscr Q_1(z)}{\pi(\I z)^2}-\frac{\pi i\left[2\mathscr  E_{4} ''\big(z+\frac12\big)-\mathscr  E_{4}''(2z)\right]}{30\I z}
\end{split}\label{eq:R1(z)}\intertext{and}\begin{split}&\mathscr R_2(z)\\\colonequals {}&\sum_{k=0}^\infty\binom{2k}{k}^3\mathsf H_{ k}^{(2)}\frac{2[1-2\alpha_4(z)]k+ R_{-1/2}(1-2\alpha_4(z))}{\I z}\left\{\frac{\alpha_{4}(z)[1-\alpha_{4}(z)]}{2^{4}}\right\}^k
\\={}&\frac{\mathscr Q_2(z)}{\pi(\I z)^2}-\frac{\pi i\left[\mathscr  E_{4} ''\big(z+\frac12\big)-8\mathscr  E_{4}''(2z)\right]}{15\I z},
\end{split}\label{eq:R2(z)}
\end{align}
}where\begin{align}\label{eq:EichlerE4''}
\mathscr E''_4(z)\colonequals \frac{\partial^2\mathscr
E_4(z)}{\partial z^{2}}=2\int_{z}^{i\infty} [1-E_4(w)]\D w.
\end{align}
\end{enumerate}
\end{theorem}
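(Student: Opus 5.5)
The plan is to view the harmonic sums as derivatives with respect to a parameter of a hypergeometric series, and then to identify that derivative as an explicit integral over the modular curve.

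\textbf{Step 1 (generating functions).} The starting point is
\[
\frac{\partial^2}{\partial\varepsilon^2}\bigg|_{\varepsilon=0}\frac{(1/2+\varepsilon)_k(1/2-\varepsilon)_k}{(1/2)_k^2}=-2\sum_{j=0}^{k-1}\frac{1}{(j+1/2)^2}=-8\Big[\mathsf H^{(2)}_{2k}-\tfrac14\mathsf H^{(2)}_k\Big],
\]
and similarly for $(1\pm\varepsilon)_k/k!$, which produces $\mathsf H_k^{(2)}$. Hence the numerators of $\mathscr Q_1$ and $\mathscr Q_2$ are second $\varepsilon$-derivatives at $\varepsilon=0$ of
\[
{}_3F_2\!\left(\tfrac12+\varepsilon,\tfrac12-\varepsilon,\tfrac12;1,1;4\alpha_4(1-\alpha_4)\right)
\quad\text{and}\quad
{}_3F_2\!\left(\tfrac12,\tfrac12,\tfrac12;1+\varepsilon,1-\varepsilon;\cdot\right).
\]
Clausen's formula turns the first into the product ${}_2F_1(\tfrac14+\tfrac\varepsilon2,\tfrac14-\tfrac\varepsilon2;1;\cdot)^2$, or equivalently into a product of Legendre functions $P_{\nu}(1-2\alpha_4)P_{-\nu-1}(1-2\alpha_4)$ with $\nu=-\tfrac12+\varepsilon$. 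The denominator is $[\,{}_2F_1(\tfrac12,\tfrac12;1;\alpha_4)]^2$, which is proportional to $\theta_3^4(2z)$.

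\textbf{Step 2 (the derivative in $\nu$).} I would express $\partial_\nu^2$ of the Legendre product at $\nu=-1/2$ as a Laplace-type or Ramanujan-type integral over the Hauptmodul $\alpha_4$. Specifically, I would use the differential equation in $\varepsilon$: the second derivative $Y$ satisfies an inhomogeneous version of the Legendre ODE whose source term is $P_{-1/2}^2$. Variation of parameters then gives
\[
Y=\int [P_{-1/2}(x)P_{-1/2}(\xi)\cdots]\,\D\xi ,
\]
a twofold iterated integral. Changing variables to $z$, with $P_{-1/2}(1-2\alpha_4(z))\propto\theta_3^2(2z)$ a weight-1 form and $\D\alpha_4/\D z$ of weight 2, the integrand becomes a weight-4 form for $\Gamma_0(4)$ multiplied by $(w-z)^2$. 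That weight-4 form is a combination of $E_4(z+\tfrac12)$ and $E_4(2z)$, read off from $\theta_3^8$ and $\theta_4^8$ expansions. In this way $\mathscr Q_j(z)$ becomes a combination of $\mathscr E_4(z+\tfrac12)$ and $\mathscr E_4(2z)$ plus polynomial/period terms, which accounts for $(\I z)^2$, $\zeta(3)/\I z$ and $\mathscr E_4/\I z$.

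\textbf{Step 3 (constants).} The constant terms, namely the combination $4E(z+\tfrac12,2)-E(2z,2)$ and the $\zeta(3)$ coefficients, will be fixed by comparing asymptotics as $z\to i\infty$ (equivalently $\alpha_4\to0$) and at the cusp $0$ via $z\mapsto -1/(4z)$. Here one needs the known relation that $E(z,2)$ combined with $\mathscr E_4$ and $\bar{\mathscr E_4}$ gives $\zeta(3)$ and $(\I z)^3$ terms. On the rays $2z/i\ge1$ and $\RE z=\tfrac12$ the quantities $\alpha_4(1-\alpha_4)/16$ are real and lie in the convergence region, so the identities hold there by analytic continuation. I would check that $\mathscr E_4$ enters only through its imaginary-part combination, which makes everything real.

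\textbf{Step 4 (part (b)).} Apply the operator $\frac{1}{2\pi i}\partial_z$, or rather the modular-covariant weight-raising operator, to the part (a) formulas. Using $\D/\D z=\frac{\D\alpha_4}{\D z}\frac{\D}{\D\alpha_4}$ together with the standard Ramanujan-series identity expressing $\theta$-derivatives via $2(1-2\alpha_4)k+R_{-1/2}$ (the same mechanism that yields \eqref{eq:Rama2}--\eqref{eq:Rama4}), $\mathscr R_j$ should equal $\mathscr Q_j/(\pi(\I z)^2)$ plus terms involving $\mathscr E_4''$, since $\mathscr E_4'''$ is proportional to $1-E_4$.

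\textbf{Main obstacle.} I expect the hardest part to be Step 2: producing the exact weight-4 Eisenstein combination and the precise normalizations of the iterated integral. Errors in the coefficients $8$ versus $1$ and $1$ versus $2$ are likely there, and I would check them numerically at $z=i/2$ and $z=\tfrac12+\tfrac i{\sqrt2}$.
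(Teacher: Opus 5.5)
Your plan has a genuine gap in Steps 2--3. The modular substitution does not produce what you claim, and the non-holomorphic shape of the right-hand sides comes from a different source.

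\textbf{What the substitution actually gives.} Put $s=\alpha_4(w)$, $t=\alpha_4(z)$ and use $\mathbf K(\sqrt{1-s})/\mathbf K(\sqrt s)=2w/i$. The variation-of-parameters kernel of the second-order Legendre equation then becomes $\mathbf K(\sqrt{1-s})\mathbf K(\sqrt t)-\mathbf K(\sqrt s)\mathbf K(\sqrt{1-t})=\frac{2}{i}(w-z)\mathbf K(\sqrt s)\mathbf K(\sqrt t)$. This is linear in $w-z$, not quadratic. The source terms are also not $P_{-1/2}^2$:
\begin{itemize}
\item For $\partial_\nu^2P_\nu$ the source is $-2P_{-1/2}$, which yields the weight-4 integrand $\alpha_4(1-\alpha_4)P_{-1/2}^4$.
\item For $\partial_\varepsilon^2P^{\varepsilon}_{-1/2}$ the source is $P_{-1/2}/[2t(1-t)]$, which yields $P_{-1/2}^4$; this does not vanish at $i\infty$. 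This case needs the associated-Legendre identity \eqref{eq:Clausen'}, which your Step 1 never addresses.
\end{itemize}
Either way you obtain \emph{first derivatives} of Eichler integrals. The holomorphic identities are
\[
\mathscr Q_1(z)=\frac{\pi^2}{60}\left[4\mathscr E_4'\left(z+\tfrac12\right)-\mathscr E_4'(2z)\right],\qquad
\mathscr Q_2(z)=\frac{\pi^2}{15}\left[\mathscr E_4'\left(z+\tfrac12\right)-4\mathscr E_4'(2z)\right].
\]
These are the Lambert series \eqref{eq:LR1}--\eqref{eq:LR2}. The second one is obtained only after applying the differentiated sum rule \eqref{eq:sumEichlerE4}, because its natural arguments are $z,2z,4z$. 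Your target, ``$\mathscr Q_j$ equals a combination of $\mathscr E_4(z+\frac12),\mathscr E_4(2z)$ plus polynomial or period terms'', cannot hold as a holomorphic identity; comparing $q$-expansions rules it out. It also could never generate the factors $1/\I z$. Your own Step 4 tacitly uses the correct picture: differentiating $\mathscr E_4$ would give $\mathscr E_4'$, not $\mathscr E_4''$.

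\textbf{Where the non-holomorphic terms come from.} The terms $E(\cdot,2)$, $(\I z)^2$, $\zeta(3)/\I z$ and $\mathscr E_4/\I z$ are functions of $z$, not constants that asymptotics could fix. Likewise, the final identities are not analytic, so they cannot follow from analytic continuation. They come from \eqref{eq:E(z,2)LR'}. Writing $(w-\tau)(w-\bar\tau)=(w-\tau)^2+2i(w-\tau)\I\tau$, that formula says
\[
E(\tau,2)=(\I\tau)^2+\frac{45\zeta(3)}{\pi^3\I\tau}-\frac{3}{2\I\tau}\RE\left[i\mathscr E_4(\tau)+\I\tau\,\mathscr E_4'(\tau)\right].
\]
When $2\RE\tau\in\mathbb Z$ the bracket is real, and solving for $\mathscr E_4'$ gives
\[
\mathscr E_4'(\tau)=-\frac23E(\tau,2)+\frac23(\I\tau)^2+\frac{30\zeta(3)}{\pi^3\I\tau}-\frac{i\,\mathscr E_4(\tau)}{\I\tau}\qquad(2\RE\tau\in\mathbb Z).
\]
Substituting this at $\tau=z+\frac12$ and $\tau=2z$ into the two holomorphic identities above gives exactly \eqref{eq:Q1} and \eqref{eq:Q2}. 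This is why the theorem is restricted to $\RE z\in\{0,\frac12\}$. The paper reaches the same endpoint through \eqref{eq:EZF_LR}, \eqref{eq:E(z,2)LR}, \eqref{eq:sumEichlerE4} and \eqref{eq:E(z,2)add}.

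\textbf{What survives from your plan.} Fixing the homogeneous ambiguity at $z\to i\infty$ is sound once applied to the correct holomorphic identities. For $\mathscr Q_2$ it is simpler than the paper's route, which uses tabulated values to find $c_1,c_2$, the auxiliary evaluation $\frac{7\zeta(3)}{2}-2\pi G$, and \eqref{eq:EichlerE4refl}. One has $\partial_z^2\mathscr Q_1=-2\pi^2\alpha_4(1-\alpha_4)P_{-1/2}^4$ and $\partial_z^2\mathscr Q_2=2\pi^2(1-P_{-1/2}^4)$. The constant $2\pi^2$ comes from the explicit term $-(\partial_\varepsilon P^\varepsilon_{-1/2})^2$. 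The decay $\mathscr Q_j\to0$ then kills the leftover linear polynomial.

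\textbf{Part (b).} The operator you need is
\[
\mathscr R_j=-\frac1\pi\partial_{\I z}\left(\frac{\mathscr Q_j}{\I z}\right)=\frac{\mathscr Q_j}{\pi(\I z)^2}-\frac{i}{\pi\I z}\mathscr Q_j'(z).
\]
It comes from the identity for $\partial_{\I z}\{[\alpha_4(1-\alpha_4)]^k/([\mathbf K(\sqrt{\alpha_4})]^2\I z)\}$. Applied to the holomorphic $\mathscr E_4'$ forms above, it produces the $\mathscr E_4''$ terms in \eqref{eq:R1(z)}--\eqref{eq:R2(z)}.
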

\begin{remark}For many quadratic irrational numbers    $z$, Glasser and Zucker \cite{GlasserZucker1980} have produced closed-form evaluations of $ E(z,s)$ in terms of Dirichlet $L$-functions $ L_d(s)$, with suitably chosen $d$. All the special values of Epstein zeta functions in Table \ref{tab:H2} can be read off from \cite[Table VI]{GlasserZucker1980}.
\eor\end{remark}\begin{remark}When $z$ is a quadratic irrational number listed in the first column of Table \ref{tab:H2}, there is a corresponding rational number $ r=r(z)\in\mathbb Q$, such that the sum\begin{align}
\begin{split}\mathscr S_{r}
(z)\colonequals {}&\frac{7\zeta(3)}{4\pi\I z }-\frac{\pi^2 i\left[8\mathscr  E_{4} \big(z+\frac12\big)-\mathscr  E_{4}(2z)\right]}{120\I z}\\{}&+r\left\{ \frac{2\pi^2}{3}(\I z)^2+\frac{2\zeta(3)}{\pi\I z } +\frac{\pi^2 i\left[\mathscr  E_{4} \big(z+\frac12\big)-2\mathscr  E_{4}(2z)\right]}{15\I z}\right\}\label{eq:Sr(z)}
\end{split}\end{align}becomes a  rational multiple of $ \pi^2$, and that \begin{align}\mathscr T_r(z)\colonequals\mathscr  R_1(z)-r\mathscr R_2(z)\label{eq:Tr(z)}
\end{align} becomes a  rational multiple of $ \pi$. (See \S\ref{subsec:sumEichler} for the explicit computations.)

The aforementioned property of $ \mathscr S_{r(z)}(z)$, together with the reductions of Epstein zeta functions to Dirichlet $L$-values, allow us to prove \eqref{eq:Sun1}--\eqref{eq:Sun4} (cf.\ the penultimate column of Table \ref{tab:H2}), which have been recently conjectured by the first-named author in Question 507709 of  MathOverflow \cite{MOpost}. Note that Eric Naslund has posted an answer to  \cite{MOpost} that contained a complete proof of \eqref{eq:Sun1}, but the more challenging identities \eqref{eq:Sun2}--\eqref{eq:Sun4} did not succumb to his method.

The corresponding property of $ \mathscr T_{r(z)}(z)$ (cf.\ the last column of Table \ref{tab:H2}) constitutes an alternative way to interpret  \eqref{eq:-64H2}--\eqref{eq:4096H2}. \eor\end{remark}

\begin{table}[h]\caption{\label{tab:H2}Selected arithmetic data for special cases of Theorem \ref{thm:H2}}{\small\begin{align*}\begin{array}{c|l@{\,\,}l@{\,}l|l@{\;\,}l|l@{\;\,}l@{\;\,}l}\hline\hline
\vphantom{\frac\int\int}z & \frac{\alpha_{4}(z)[1-\alpha_{4}(z)]}{2^4}&\frac{1-2\alpha_4(z)}{\I z}&\frac{ R_{-1/2}(1-2\alpha_4(z))}{2[1-2\alpha_4(z)]}&E\big(z+\frac12,2\big)&E(2z,2)& r(z) & \mathscr Q_1(z)-r(z)\mathscr Q_2(z)&\mathscr T_{r(z)}(z) \\\hline
\vphantom{\frac\int\int}\frac{\sqrt{3}i}{2} & \frac{1}{2^{8}}&1&\frac{1}{6}&\frac{135L_{-3}(2)}{4\pi^2}&\frac{405L_{-3}(2)}{8\pi^2} & \frac{1}{16} & \frac{11\pi^{2}}{96}-\frac{45L_{-3}(2)}{32}&\frac{\pi}{36} \\
\vphantom{\frac\int\int}\frac{\sqrt{7}i}{2} & \frac{1}{2^{12}} &\frac{3}{4}&\frac{5}{42}&\frac{105L_{-7}(2)}{4\pi^2}&\frac{525L_{-7}(2)}{8\pi^2}& \frac{1}{46} & \frac{43\pi^{2}}{552}-\frac{245L_{-7}(2)}{368} &\frac{\pi}{966}\\\hline
\vphantom{\frac\int\int}\frac{1}{2}+\frac{i}{\sqrt{2}} & -\frac{1}{2^{6}}&2&\frac14&\frac{30L_{-8}(2)}{\pi^2}&\frac{30L_{-8}(2)}{\pi^2} & \frac{1}{4} & \frac{5\pi^{2}}{24}-2L_{-8}(2) &-\frac{\pi}{12}\\
\vphantom{\frac\int\int}\frac{1}{2} +i& -\frac{1}{2^{9}} &\frac{3}{2\sqrt{2}}&\frac16&\frac{30L_{-4}(2)}{\pi^2}&\frac{105L_{-4}(2)}{2\pi^2}& \frac{1}{16} & \frac{11\pi^{2}}{96} -\frac{5L_{-4}(2)}{4}&-\frac{\pi}{96}\\\hline\hline
\end{array}\end{align*}}\end{table}

Inspired by the classical Ramanujan-type series  \eqref{eq:Rama1}--\eqref{eq:Rama4} as well as their variants \eqref{eq:-64H2}--\eqref{eq:4096H2}, the first-named author discovered the following identities \cite[(90), (99), (104), (110)]{Sun2022}  experimentally:{\allowdisplaybreaks
\begin{align}\sum_{k=0}^\infty\binom{2k}k^3\mathsf H^{(3)}_{2k}\frac{4k+1}{(-64)^k}&=\frac{15\zeta(3)}{4\pi}-2L_{-4}(2),\label{eq:SunH3a}
\\\sum_{k=0}^\infty\binom{2k}k^3\left[\mathsf H_{2k}^{(3)}-\frac{7}{64}\mathsf H_k^{(3)}\right]\frac{6k+1}{256^k}&= \frac{25\zeta(3)}{8\pi}-L_{-4}(2),\label{eq:SunH3b}
\\\sum_{k=0}^\infty\binom{2k}k^3\left[\mathsf H_{2k}^{(3)}-\frac{7}{64}\mathsf H_k^{(3)}\right]\frac{6k+1}{(-512)^k}&=\frac{57\zeta(3)}{16\sqrt{2}\pi}-L_{-8}(2),
\label{eq:SunH3c}\\\sum_{k=0}^\infty\binom{2k}k^3\left[\mathsf H_{2k}^{(3)}-\frac{43}{352}\mathsf H_k^{(3)}\right]\frac{42k+5}{4096^k}&=\frac{555\zeta(3)}{77\pi}-\frac{32L_{-4}(2)}{11},\label{eq:SunH3d}
\end{align}}where  $ \mathsf H^{(3)}_k\colonequals \sum_{0<n\leq k}\frac{1}{n^3}$.
Some of these formulae were subsequently proved: Wei and Xu \cite{WeiXu2023} verified \eqref{eq:SunH3a}; Li and Chu \cite{LiChu2025Axioms} confirmed \eqref{eq:SunH3b}; Li and Chu \cite{LiChu2024} demonstrated \eqref{eq:SunH3c}.
The last one
\eqref{eq:SunH3d} has remained hitherto open.

In \S\ref{sec:H3}, we will unify \eqref{eq:SunH3a}--\eqref{eq:SunH3d} in a modular framework. On top of the prerequisites for Theorem \ref{thm:H2}, we will use the weight-6 holomorphic Eisenstein series \begin{align}
\begin{split}E_{6}(z)\colonequals {}&\frac{[\eta(z)]^{60}[1+\lambda(z)][2-\lambda(z)][1-2\lambda(z)]}{2[\eta(2z)\eta(z/2)]^{24}}\\={}&\frac{1}{2\zeta(6)}\sum_{\substack{m,n\in\mathbb Z\\m^2+n^2\neq0}}\frac{1}{(mz+n)^{6}}=1-504\nsum\limits_{n=1}^\infty\dfrac{n^5e^{2\pi inz}}{1-e^{2\pi inz}}
\end{split}\label{eq:E6defn}
\end{align} for $ z\in\mathfrak H$, along with an associated Eichler integral\begin{align}
\mathscr E_6(z)\colonequals\int_{z}^{i\infty} [1-E_6(w)](z-w)^{4}\D w. \label{eq:EichlerE6defn}
\end{align}
\begin{theorem}\begin{enumerate}[leftmargin=*,  label=\emph{(\alph*)},ref=(\alph*),
widest=d, align=left] \item
If  we have either $ 2z/i\geq1$, or $ \RE z=\frac12$ and $ \I z\geq\frac{1}{\sqrt{2}}$,  then \begin{align}
\frac{\displaystyle \sum_{k=0}^\infty\binom{2k}{k}^3\left[ \mathsf H_{2k}^{(3)}-\frac{1}{8}\mathsf H_{ k}^{(3)} \right]\left\{\frac{\alpha_{4}(z)[1-\alpha_{4}(z)]}{2^{4}}\right\}^k}{\displaystyle  \sum_{k=0}^\infty\binom{2k}{k}^3\left\{\frac{\alpha_{4}(z)[1-\alpha_{4}(z)]}{2^{4}}\right\}^k}={}&\frac{\pi ^3 i\big[\mathscr E''_6(2z)-8\mathscr E''_6\big(z+\frac12\big)\big]}{1512}\label{eq:H3Q1}\intertext{and}
\begin{split}\frac{\displaystyle \sum_{k=0}^\infty\binom{2k}{k}^3\mathsf H_{ k}^{(3)}\left\{\frac{\alpha_{4}(z)[1-\alpha_{4}(z)]}{2^{4}}\right\}^k}{\displaystyle  \sum_{k=0}^\infty\binom{2k}{k}^3\left\{\frac{\alpha_{4}(z)[1-\alpha_{4}(z)]}{2^{4}}\right\}^k}={}&\frac{\pi ^3 i\big[\mathscr E''_6\big(z+\frac12\big)-8\mathscr E''_6(2z)\big]}{189}\\&{}-\frac{\pi^3 i\big[4\mathscr E_4(z)-\mathscr E_4(4z)\big]}{15},\end{split}\label{eq:H3Q2}
\end{align}
where\begin{align}
\mathscr E''_6(z)\colonequals \frac{\partial^2\mathscr
E_6(z)}{\partial z^{2}}=12\int_{z}^{i\infty} [1-E_6(w)](z-w)^{2}\D w.
\end{align}
\item If  we have either $ 2z/i\geq1$, or $ \RE z=\frac12$ and $ \I z\geq\frac{1}{\sqrt{2}}$,  then \begin{align}
\begin{split}&\sum_{k=0}^\infty\binom{2k}{k}^3\left[ \mathsf H_{2k}^{(3)}-\frac{1}{8}\mathsf H_{ k}^{(3)} \right]\frac{2[1-2\alpha_4(z)]k+ R_{-1/2}(1-2\alpha_4(z))}{\I z}\left\{\frac{\alpha_{4}(z)[1-\alpha_{4}(z)]}{2^{4}}\right\}^k\\={}&\frac{\pi ^2 i\big[\mathscr E''_6(2z)-8\mathscr E''_6\big(z+\frac12\big)\big]}{1512(\I z)^{2}}+\frac{\pi ^2 \big[\mathscr E'''_6(2z)-4\mathscr E'''_6\big(z+\frac12\big)\big]}{756\I z}\end{split}\intertext{and}\begin{split}&\sum_{k=0}^\infty\binom{2k}{k}^3\mathsf H_{ k}^{(3)}\frac{2[1-2\alpha_4(z)]k+ R_{-1/2}(1-2\alpha_4(z))}{\I z}\left\{\frac{\alpha_{4}(z)[1-\alpha_{4}(z)]}{2^{4}}\right\}^k\\={}&\frac{8\pi^2\I z}{3}-\frac{6\zeta(3)}{\pi(\I z)^{2}}-\frac{8\pi^{2}[E(4z,2)-E(z,2)]}{45\I z}\\&{}+\frac{\pi ^2 i\big[\mathscr E''_6\big(z+\frac12\big)-8\mathscr E''_6(2z)\big]}{189(\I z)^{2}}+\frac{\pi ^2 \big[\mathscr E'''_6\big(z+\frac12\big)-16\mathscr E'''_6(2z)\big]}{189(\I z)^{2}},
\end{split}
\end{align}where\begin{align}
\mathscr E'''_6(z)\colonequals \frac{\partial^3\mathscr
E_6(z)}{\partial z^{3}}=24\int_{z}^{i\infty} [1-E_6(w)](z-w)^{}\D w.
\end{align}
\end{enumerate}\label{thm:H3}
\end{theorem}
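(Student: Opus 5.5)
The plan is to run the same machinery as in Theorem \ref{thm:H2}, one weight higher. The starting point is a generating function in an auxiliary parameter. Consider the deformed hypergeometric sum $\sum_{k}\frac{(\frac12+a)_k(\frac12-a)_k(\frac12+b)_k\cdots}{k!^3}x^k$, or, more concretely, products of the form $\binom{2k}{k}\prod_{0<n\le k}(1-t^2/n^2)$ and $\prod_{0<n\le 2k}(1-t^2/n^2)$. Expanding these in $t$ produces $\mathsf H^{(2)}$, and the cubic terms produce $\mathsf H^{(3)}$ through $\prod(1+t/n)$. I would represent the weighted sums $\sum_k\binom{2k}{k}^3\mathsf H_{2k}^{(3)}x^k$ and $\sum_k\binom{2k}{k}^3\mathsf H_{k}^{(3)}x^k$ as third-order $t$-derivatives, at $t=0$, of a one-parameter family of Legendre-type integrals. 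These are of the form $\int_0^1$ or $\int_0^{\pi/2}$ of a product of $P_{-1/2+t}$-type Legendre functions.

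The first step is the modular translation. With $x=\alpha_4(z)[1-\alpha_4(z)]/2^4$, we have $\sum_k\binom{2k}{k}^3x^k=[\theta_3(2z)]^4$-type weight-2 forms, i.e.\ $\frac{4}{\pi^2}\mathbf K(\sqrt{\alpha_4})^2$ up to normalization, via Clausen's formula. The odd $t$-derivatives of the deformed Legendre functions, divided by the undeformed function, can be written as integrals over $w$ from $z$ to $i\infty$ of weight-$6$ and weight-$4$ modular forms against polynomial kernels in $(w-z)$. The combination $\mathsf H_{2k}^{(3)}-\frac18\mathsf H_k^{(3)}$ is chosen precisely so that the weight-4 (Eisenstein $E_4$) contribution cancels and only $E_6$ survives. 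The levels $2z$ and $z+\frac12$ enter through the standard relation between $\alpha_4$ and the level-4 Hauptmodul. I would verify the identification by matching $q$-expansions: expand both sides of \eqref{eq:H3Q1} and \eqref{eq:H3Q2} in $q=e^{2\pi iz}$, with $x$ as a power series in $q$. Both sides satisfy the same inhomogeneous Picard--Fuchs (third-order) ODE in $x$, whose inhomogeneity is computed from the recurrence for $\mathsf H^{(3)}$. So agreement of finitely many coefficients, together with the initial behaviour as $z\to i\infty$, suffices. The leftover $\mathscr E_4(z),\mathscr E_4(4z)$ term in \eqref{eq:H3Q2} arises from cross terms between second- and first-order $t$-derivatives. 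That is, it comes from the known Theorem \ref{thm:H2} data multiplied by the logarithmic derivative of the weight-2 form.

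The main obstacle is this step: getting the inhomogeneous ODE right and showing that the Eichler-integral side satisfies it. I would handle it by applying the Bol identity $(\partial_z)^5\mathscr E_6=-24[1-E_6]$. Rewriting $\partial_z$ as $\frac{dx}{dz}\partial_x$, with $\frac{dx}{dz}$ expressed through $\alpha_4$ and the weight-2 form, turns the weight-6 Eichler integral divided by the weight-2 form into a solution of a linear ODE in $x$ whose homogeneous part is the symmetric square (the Clausen ODE). Convergence and the branch choice on the region $2z/i\ge1$ or $\RE z=\frac12$, $\I z\ge1/\sqrt2$ are inherited from Theorem \ref{thm:H2}: there $|x|\le 2^{-6}$ and the path $w\in[z,i\infty)$ stays in the fundamental-domain image.

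Part (b) then follows by differentiation. Applying $\frac{1}{2\pi i}\frac{d}{dz}$, or equivalently $x\frac{d}{dx}$ times $\frac{dx}{dz}$-normalization, to the identities of part (a), multiplied by the weight-2 form, gives the operator $k\mapsto 2[1-2\alpha_4]k+R_{-1/2}(1-2\alpha_4)$ divided by $\I z$. This is exactly as in the passage from $\mathscr Q_j$ to $\mathscr R_j$ in Theorem \ref{thm:H2}; the derivative of the weight-2 form is expressed via $E_2$, which is where $R_{-1/2}$ enters. Differentiating $\mathscr E''_6$ yields $\mathscr E'''_6$, and the non-holomorphic correction $-3/(\pi\I z)$ produces the $1/(\I z)^2$ terms. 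For \eqref{eq:H3Q2}, differentiating $4\mathscr E_4(z)-\mathscr E_4(4z)$ and combining with the modular transformation $z\mapsto-1/(4z)$ yields the $\zeta(3)$, $\I z$ and $E(\cdot,2)$ terms. For that I would reuse the evaluation of $\mathscr E_4'$ in terms of $E(z,2)$ underlying \eqref{eq:Q2}.
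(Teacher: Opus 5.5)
Your argument for (a) is sound, but it takes a genuinely different route from the paper's.

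\textbf{How the paper proceeds.} The paper \emph{constructs} the right-hand sides. It writes the two series as third $\varepsilon$-derivatives of ${}_3F_2$ and ${}_4F_3$ deformations. It then solves the resulting inhomogeneous Appell--Legendre equations by variation of parameters (Lemmas \ref{lm:H3a} and \ref{lm:H3b}). The second lemma needs a Guillera--Rogers residue argument and a known integral for ${}_4F_3\big(1,1,1,1;\frac32,\frac32,\frac32\big)$ to exclude complementary terms. Finally it substitutes $s=\lambda(2w)$ and uses Ramanujan's formulas for $E_4,E_6$ together with \eqref{eq:sumE6} to reach Eichler integrals.

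\textbf{How your route works.} You \emph{verify} the stated formulas instead. Take $\theta=x\frac{d}{dx}$, $L=\theta^3-8x(2\theta+1)^3$, and $F_0=\sum_k\binom{2k}{k}^3x^k$. The first-order recurrences for $\mathsf H^{(3)}_{2k}-\frac18\mathsf H^{(3)}_k$ (increment $(2k+1)^{-3}$) and for $\mathsf H^{(3)}_k$ (increment $(k+1)^{-3}$) give $LN_1=8xF_0$ and $LN_2=F_0-1$ for the two numerators. Since $F_0,zF_0,z^2F_0$ solve $L$ and $dx/dz=2\pi i(1-2\alpha_4)xF_0$, one gets $L(F_0g)=g'''(z)/[(2\pi i)^3(1-2\alpha_4)F_0^2]$. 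Bol's identity, together with $\mathscr E_4'''=-2(1-E_4)$, then reduces the check to two identities: $E_6(z+\frac12)-E_6(2z)=\frac{63}{2}\alpha_4(1-\alpha_4)(1-2\alpha_4)F_0^3$ and $\frac{E_4(z)-16E_4(4z)}{15}-\frac{E_6(z+\frac12)-64E_6(2z)}{63}=(1-2\alpha_4)F_0^2(F_0-1)$. Both follow from Ramanujan's relations or a finite $q$-expansion check.

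Uniqueness is clean. The indicial roots of $L$ at $x=0$ are $0,0,0$, so the solutions of $LD=0$ analytic at $x=0$ are exactly the multiples $cF_0$. Both sides are power series in $x=q+O(q^2)$, with $q=e^{2\pi iz}$, and both vanish at the cusp. Analytic continuation along the two rays finishes the proof.

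\textbf{What each route buys.} Your route is shorter and makes the boundary analysis of Lemma \ref{lm:H3b} unnecessary. The ``$-1$'' in $F_0-1$ is exactly the regularization $[\mathbf K(\sqrt s)]^2-(\pi/2)^2$ appearing in \eqref{eq:H3int2}. The paper's route derives the Eichler-integral expressions rather than presupposing them. Its deformation machinery is also what \S\ref{sec:outlook} reuses for mixed-order sums. Your part (b) is essentially the paper's argument.

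\textbf{Inaccuracies in your write-up.} None of these affects the verification.
\begin{enumerate}
\item The $\mathscr E_4$ term in \eqref{eq:H3Q2} is not Theorem \ref{thm:H2} data times a logarithmic derivative. In your own scheme it comes from the constant $-1$ in $F_0-1$, via $E_4(z)-16E_4(4z)=-15(1-2\alpha_4)F_0^2$.
\item The ODE is solved by $F_0$ \emph{times} the Eichler-integral side, not by that side divided by $F_0$.
\item In (b), you divide the identities of (a) by $\I z$ (rather than multiplying by the weight-$2$ form) and differentiate in $\I z$.
\item No transformation $z\mapsto-1/(4z)$ is needed. It suffices to apply \eqref{eq:E(z,2)LR'} at $z$ and at $4z$, using that $i[4\mathscr E_4(z)-\mathscr E_4(4z)]+4\I z\,[\mathscr E_4'(z)-\mathscr E_4'(4z)]$ is real when $\RE z\in\{0,\frac12\}$.
\end{enumerate}
When you carry out (b), you will find the $\mathscr E_6'''$ term of the last formula over $189\,\I z$, not $189(\I z)^2$. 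That is also what \eqref{eq:U_64} requires, so the printed $(\I z)^2$ there appears to be a misprint.
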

\begin{remark} To fill in the second column of Table \ref{tab:H3}, one may consult  \cite[Table VI]{GlasserZucker1980}. For instance, the entry \cite[Table VI, $ (a,b,c)=(1,0,28)$]{GlasserZucker1980} can be transcribed as follows:\begin{align}
E\big(2\sqrt{7}i,s\big)=\frac{\big(2\sqrt{7}\big)^{s}}{2\zeta(2s)}\left[ \left(1-\frac{1}{2^{s-1}}+\frac{3}{2^{2 s}}-\frac{1}{2^{3 s-2}}+\frac{1}{2^{4 s-2}}\right) L_{1}(s)L_{-7}(s)+L_{-4}(s)L_{28}(s)\right],
\end{align}  while we have $ L_1(2)=\zeta(2)=\frac{\pi^2}{6}$ and $ L_{28}(2)=\frac{2 \pi ^2}{7 \sqrt{7}}$. Equipped with the second column of Table \ref{tab:H3} and some entries of Table \ref{tab:H2}, one may compute [cf.\ \eqref{eq:E(z,2)add} below]\begin{align}
E(4z,2)-E(z,2)=E\left( z+\frac{1}{2} ,2\right)-\frac{9}{2}E(2z,2)+2E(4z,2),
\end{align}  as shown in the third column of Table \ref{tab:H3}. Note that   these special values of   $E(4z,2)-E(z,2) $ have fully accounted for all the occurrences of $ L_d(2)$  on the right-hand sides of \eqref{eq:SunH3a}--\eqref{eq:SunH3d}.
\eor\end{remark}\begin{remark}When $z$ is a quadratic irrational number listed in the first column of Table \ref{tab:H3}, there is a corresponding rational number $ \check r=\check r(z)\in\mathbb Q$, such that the sum\begin{align}
\begin{split}\check{\mathscr T}_{\check r}
(z)\colonequals {}&\frac{\pi ^2 i\big[\mathscr E''_6(2z)-8\mathscr E''_6\big(z+\frac12\big)\big]}{1512(\I z)^{2}}+\frac{\pi ^2 \big[\mathscr E'''_6(2z)-4\mathscr E'''_6\big(z+\frac12\big)\big]}{756\I z}+\check r\left\{\frac{8\pi^2\I z}{3}-\frac{6\zeta(3)}{\pi(\I z)^{2}} \right.\\&\left.{}+\frac{\pi ^2 i\big[\mathscr E''_6\big(z+\frac12\big)-8\mathscr E''_6(2z)\big]}{189(\I z)^{2}}+\frac{\pi ^2 \big[\mathscr E'''_6\big(z+\frac12\big)-16\mathscr E'''_6(2z)\big]}{189(\I z)^{2}}\right\}\label{eq:Ur(z)}
\end{split}\end{align}becomes a  rational multiple of $ \frac{\zeta(3)}{\pi}$.   \big[See \S\ref{subsec:specEichlerE6} for the detailed computations that draw on selected values of  $E\big(z+\frac12,3\big)$ and $E(2z,3)$.\big]

For each listed quadratic irrational $ z$, the entry $ E\big(z+\frac12,2\big)$ [resp.\ $ E(2z,2)$] in Table \ref{tab:H2} and the entry $ E\big(z+\frac12,3\big)$ [resp.\ $ E(2z,3)$] in Table \ref{tab:H3} appear in drastically different forms, but they are indeed special cases of the same formula involving Dirichlet  $L$-functions. For example, the identity  \cite[Table VI, $ (a,b,c)=(1,0,7)$]{GlasserZucker1980}\begin{align}
E\big(\sqrt{7}i,s\big)=\frac{\big(\sqrt{7}\big)^{s}}{\zeta(2s)} \left(1-\frac{1}{2^{s-1}}+\frac{1}{2^{2 s-1}}\right)L_{1}(s)L_{-7}(s)
\end{align}entails the exact values
 of both $ E\big(\sqrt{7}i,2\big)$ and $ E\big(\sqrt{7}i,3\big)$.
\eor\end{remark}\begin{remark}In addition to \eqref{eq:SunH3d}, the first-named author also proposed the following formula (cf.\ \cite[(5.13)]{Sun2026binomII}): \begin{align}
\sum_{k=0}^\infty\binom{2k}k^3\left[(42k+5)\mathsf H_k^{(3)}-\frac{352}{(2k+1)^2}\right]\frac{1}{4096^k}&=\frac{32}{7}\left[ \frac{335\zeta(3)}{\pi} -224L_{-4}(2)\right],\label{eq:SunH3e}
\end{align}based on numerical evidence. One can  now prove \eqref{eq:SunH3e}  by eliminating the $ \mathsf H_{2k}^{(3)}$ term from a linear combination of  \eqref{eq:SunH3d} [to be demonstrated in Proposition \ref{prop:H3proof}(b)] and an identity established by Wei and Xu (cf.\ \cite[(1.9)]{WeiXu2025}):
\begin{align}
\sum_{k=0}^\infty\binom{2k}k^3\left\{(42k+5)\left[17\mathsf H^{(3)}_{2k}-2\mathsf H_k^{(3)}\right]-\frac{27}{(2k+1)^2}\right\}\frac{1}{4096^k}=\frac{240\zeta(3)}{\pi} -128L_{-4}(2).\label{eq:WeiXu}
\end{align}In \cite[\S4]{WeiXu2025}, Wei and Xu verified \eqref{eq:WeiXu} by hypergeometric transformations, extending Wei's hypergeometric approach \cite{Wei2023c} to \eqref{eq:4096H2}. However, such a hypergeometric method was not sharp enough to settle either \eqref{eq:SunH3d} or \eqref{eq:SunH3e} individually.
\eor\end{remark}

\begin{table}[h]\caption{\label{tab:H3}Selected arithmetic data for special cases of Theorem \ref{thm:H3}}{\small\begin{align*}\begin{array}{c|ll|ll|ll}\hline\hline
\vphantom{\frac\int\int}z & E(4z,2)& E(4z,2)-E(z,2)&E\big(z+\frac12,3\big)&E(2z,3)& \check r(z)&\check{\mathscr T}_{\check r(z)}(z) \\\hline
\vphantom{\frac\int\int}\frac{\sqrt{3}i}{2} &\frac{3105L_{-3}(2)}{32\pi^{2}}+\frac{30\sqrt{3}L_{-4}(2)}{\pi^{2}}&\frac{60\sqrt{3}L_{-4}(2)}{\pi^{2}} &\frac{105\zeta (3)}{2 \pi ^3} & \frac{1155 \zeta (3)}{8 \pi ^3} &\frac1{64}&\frac{25\zeta(3)}{24\pi}\\
\vphantom{\frac\int\int}\frac{\sqrt{7}i}{2} & \frac{4305L_{-7}(2)}{32\pi^{2}}+\frac{360L_{-4}(2)}{\sqrt{7}\pi^2}&\frac{720L_{-4}(2)}{\sqrt{7}\pi^{2}}&\frac{540 \zeta (3)}{7 \pi ^3}&\frac{3375 \zeta (3)}{7 \pi ^3} &\frac1{352}& \frac{555 \zeta (3)}{2156 \pi } \\\hline
\vphantom{\frac\int\int}\frac{1}{2}+\frac{i}{\sqrt{2}} &\frac{105L_{-8}(2)}{2\pi^2}+\frac{45L_{-4}(2)}{\sqrt{2}\pi^{2}}&\frac{45\sqrt{2}L_{-4}(2)}{\pi^{2}} &\frac{2835 \zeta (3)}{32 \pi ^3} &\frac{2835 \zeta (3)}{32 \pi ^3} &\frac18&\frac{15\zeta(3)}{4\pi} \\
\vphantom{\frac\int\int}\frac{1}{2} +i&\frac{825L_{-4}(2)}{8\pi^2}+\frac{45\sqrt{2}L_{-8}(2)}{\pi^2} &\frac{90\sqrt{2}L_{-8}(2)}{\pi^2} &\frac{945 \zeta (3)}{16 \pi ^3}&\frac{27405 \zeta (3)}{128 \pi ^3}&\frac1{64}&\frac{57 \zeta (3)}{64 \pi } \\\hline\hline
\end{array}\end{align*}}\end{table}

In \S\ref{sec:outlook}, we wrap up this article with discussions of some problems related to Theorem \ref{thm:H2} and Theorem  \ref{thm:H3}.\section{\protect{Infinite series involving $ \binom{2k}k^3$, $ \mathsf H_k^{(2)}$, and $ \mathsf H_{2k}^{(2)}$}\label{sec:H2}}In \S\ref{subsec:pFqODE}, we construct integral representations for our series of interest, by solving inhomogeneous differential equations. In \S\ref{subsec:modH2}, we introduce modular parametrizations for the series and their integral representations, thereby proving the main statements of Theorem \ref{thm:H2}. In \S\ref{subsec:sumEichler}, we study some special cases  of Theorem \ref{thm:H2}, as listed in  Table \ref{tab:H2}.

 \subsection{Second-order hypergeometric deformations and variation of parameters\label{subsec:pFqODE}}Hereafter, we will use the following notation for a convergent generalized hypergeometric series
\begin{align}{_pF_q}\left(\left.\begin{array}{@{}c@{}}
a_{1},\dots,a_p \\[4pt]
b_{1},\dots,b_q \\
\end{array}\right| t\right):=1+\sum_{n=1}^\infty\frac{\prod_{j=1}^p(a_{j})_n}{\prod_{k=1}^q(b_{k})_n }\frac{t^n}{n!},\label{eq:defn_pFq}\end{align}where $ (a)_{n}\colonequals\prod_{m=0}^{n-1}(a+m)$ is the rising factorial, and $ b_1,\dots,b_q$ are neither zero nor negative integers. Occasionally, we will also write $ _pF_q\left(\!\begin{smallmatrix}a_1,\dots,a_p\\b_1,\dots,b_q\end{smallmatrix}\!\middle| t\right)$ for the analytic continuation of the convergent series defined above.
For example, the complete elliptic integral of the first kind\begin{align}
\mathbf K\big(\sqrt{t}\big)\colonequals \int_0^{\pi/2}\frac{\D \theta}{\sqrt{1-t\sin^2\theta}}=\frac{\pi}{2}{_2F_1}\left(\left.\begin{array}{@{}c@{}}
\frac{1}{2},\frac{1}{2} \\[4pt]
1 \\
\end{array}\right| t\right)
\end{align}is defined for $ t\in\mathbb C\smallsetminus[1,\infty)$, whereas the corresponding hypergeometric series converges absolutely for $ |t|<1$.

When we talk about a ``hypergeometric deformation'', we refer to every possible  (first-order or higher-order) partial derivative of a generalized hypergeometric series  $ _pF_q\left(\!\begin{smallmatrix}a_1,\dots,a_p\\b_1,\dots,b_q\end{smallmatrix}\!\middle| t\right)$
with respect to its hypergeometric parameters $a_1,\dots,a_p$ and $b_1,\dots,b_q $.  These hypergeometric deformations usually satisfy some inhomogeneous differential equations, which in turn, can be solved by a standard method called ``variation of parameters''. Such solutions lead to integral representations of certain hypergeometric deformations of our interest, as illustrated by the lemma below.\begin{lemma}\label{lm:var_para}For $ |4t(1-t)|\leq1$ and $\RE t\leq\frac12 $, we have the following integral representations for hypergeometric deformations:{\allowdisplaybreaks\begin{align}
\begin{split}{}&\sum_{k=0}^\infty\binom{2k}{k}^3\left[ \mathsf H_{2k}^{(2)}-\frac{1}{4}\mathsf H_{ k}^{(2)} \right]\left[\frac{t(1-t)}{2^{4}}\right]^k=-\frac{1}{8}\left. \!\frac{\partial^2}{\partial\nu^2} {_3}F_2\left(\left. \begin{array}{@{}c@{}}
-\nu,1+\nu,\frac{1}{2}\ \\1,1\ \\
\end{array} \right| 4t(1-t)\right)\right|_{\nu=-1/2}\\={}&\left(\frac{2}{\pi}\right)^{3}\int_0^t\mathbf K\big(\sqrt{\mathstrut s}\big)\mathbf K\big(\sqrt{\mathstrut t}\big)\big[\mathbf K\big(\sqrt{\mathstrut 1-s}\big)\mathbf K\big(\sqrt{\mathstrut t}\big)-\mathbf K\big(\sqrt{\mathstrut s}\big)\mathbf K\big(\sqrt{\mathstrut 1-t}\big)\big]\D s,\end{split}
\label{eq:nu''}\\\begin{split}{}&\sum_{k=0}^\infty\binom{2k}{k}^3\mathsf H_{ k}^{(2)}\left[\frac{t(1-t)}{2^{4}}\right]^k=\frac{1}{2}\left. \!\frac{\partial^2}{\partial\varepsilon^2} {_3}F_2\left(\left. \begin{array}{@{}c@{}}
\frac{1}{2},\frac{1}{2},\frac{1}{2}\ \\1-\varepsilon,1+\varepsilon\ \\
\end{array} \right| 4t(1-t)\right)\right|_{\varepsilon=0}
\\={}&\left(\frac{2}{\pi}\right)^{3}\int_{1/2}^t\mathbf K\big(\sqrt{\mathstrut s}\big)\mathbf K\big(\sqrt{\mathstrut t}\big)\big[\mathbf K\big(\sqrt{\mathstrut 1-s}\big)\mathbf K\big(\sqrt{\mathstrut t}\big)-\mathbf K\big(\sqrt{\mathstrut s}\big)\mathbf K\big(\sqrt{\mathstrut 1-t}\big)\big]\frac{\D s}{s(1-s)}\\{}&-\frac{\big[\mathbf K\big(\sqrt{\mathstrut t}\big)\big]^{2}}{3}-\big[\mathbf K\big(\sqrt{\mathstrut 1-t}\big)\big]^{2}+\frac{2^{4}\mathbf K\big(\sqrt{\mathstrut t}\big)\mathbf K\big(\sqrt{\mathstrut 1-t}\big)G}{\pi^{2}},\end{split}\label{eq:eps''}
\end{align}}where $ G=L_{-4}(2)\colonequals \sum_{n=0}^\infty\frac{(-1)^n}{(2n+1)^2}$ is Catalan's constant.
\end{lemma}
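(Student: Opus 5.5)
The plan has three stages. We first identify the series as parameter derivatives of ${}_3F_2$. We then derive the inhomogeneous ODE that these derivatives satisfy. Finally, we solve that ODE by variation of parameters, using Clausen's formula to get the homogeneous solutions.

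\emph{Step 1: the series as parameter derivatives.} For the first equality in \eqref{eq:nu''}, write the coefficient of $x^k$ with $x=4t(1-t)$ as $(-\nu)_k(1+\nu)_k(\frac12)_k/(k!)^3$. At $\nu=-\frac12$ this is $\binom{2k}k^3/64^k$. Put $\nu=-\frac12+\delta$ and pair the factors:
\[(\tfrac12-\delta)_k(\tfrac12+\delta)_k=\prod_{j=0}^{k-1}\bigl[(j+\tfrac12)^2-\delta^2\bigr].\]
Its second $\delta$-derivative at $\delta=0$ equals $-2[(\frac12)_k]^2\sum_{j<k}(j+\frac12)^{-2}$. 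The sum is $\sum_{j<k}(j+\frac12)^{-2}=4(\mathsf H_{2k}^{(2)}-\frac14\mathsf H_k^{(2)})$, which produces the factor $-\frac18$. Similarly, $1/[(1-\varepsilon)_k(1+\varepsilon)_k]=\prod_{j=1}^{k}(j^2-\varepsilon^2)^{-1}$ has second $\varepsilon$-derivative at $0$ equal to $2\mathsf H_k^{(2)}/(k!)^2$, which gives the factor $\frac12$ in \eqref{eq:eps''}.

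\emph{Step 2: the inhomogeneous ODE.} The function $F_\nu(x)={}_3F_2(-\nu,1+\nu,\frac12;1,1;x)$ satisfies $[\theta^3-x(\theta-\nu)(\theta+1+\nu)(\theta+\frac12)]F_\nu=0$ with $\theta=x\,d/dx$. Differentiating twice in $\nu$ at $\nu=-\frac12$ kills the first-order term, because $\partial_\nu[(\theta-\nu)(\theta+1+\nu)]=-(2\nu+1)$ vanishes there. So $L\,\partial_\nu^2F=2x(\theta+\frac12)F_{-1/2}$, where $L$ is the operator at $\nu=-\frac12$. The analogous computation for $\varepsilon$ gives an inhomogeneity of the form $2\theta F$.

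By Clausen's formula, $F_{-1/2}(4t(1-t))=(\frac2\pi)^2\mathbf K(\sqrt t)^2$. A basis of solutions of $L$ in the variable $t$ is $\{\mathbf K(\sqrt t)^2,\ \mathbf K(\sqrt t)\mathbf K(\sqrt{1-t}),\ \mathbf K(\sqrt{1-t})^2\}$, since $L$ is the symmetric square of the Legendre operator. Changing variables $x\mapsto t$ turns the inhomogeneous third-order equation into the symmetric square of the Legendre operator applied to the unknown, with an explicit right-hand side built from $\mathbf K(\sqrt t)^2$ and its derivative.

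\emph{Step 3: variation of parameters.} For a symmetric square, the Green's kernel is $(K(s)K'(t)-K'(s)K(t))^2$ up to Wronskian factors, using the Legendre relation $W[\mathbf K(\sqrt t),\mathbf K(\sqrt{1-t})]\propto 1/[t(1-t)]$. Integrating by parts once should bring the representation to the stated kernel
\[\mathbf K(\sqrt s)\mathbf K(\sqrt t)\bigl[\mathbf K(\sqrt{1-s})\mathbf K(\sqrt t)-\mathbf K(\sqrt s)\mathbf K(\sqrt{1-t})\bigr],\]
with measure $ds$ in the $\nu$ case and $ds/[s(1-s)]$ in the $\varepsilon$ case. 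The more direct check is to differentiate the proposed right-hand sides with respect to $t$, confirm that they satisfy the same ODE, and then match boundary data.

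For \eqref{eq:nu''}, the base point $0$ is natural. The left side is analytic at $t=0$ and vanishes there to order one. The integral also vanishes at $t=0$, and it has no $\log t$ terms once one checks that the logarithms from $\mathbf K(\sqrt{1-s})$ integrate out harmlessly. Since analytic solutions of $L$ at $0$ form a one-dimensional space spanned by $\mathbf K^2$, matching the coefficient of $t$ finishes this case.

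For \eqref{eq:eps''}, the measure $ds/[s(1-s)]$ is singular at $0$, so the base point is moved to $\frac12$. The resulting homogeneous correction $a\mathbf K(\sqrt t)^2+b\mathbf K(\sqrt t)\mathbf K(\sqrt{1-t})+c\mathbf K(\sqrt{1-t})^2$ is fixed by analyticity at $t=0$ and the value at $t=0$. This step is where the constants $-\frac13$, $-1$ and $16G/\pi^2$ arise.

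\emph{Main obstacle.} The hardest step is determining these three constants. The delicate one is $b$, which requires evaluating integrals such as $\int_0^{1/2}\mathbf K(\sqrt s)\mathbf K(\sqrt{1-s})\,ds/s$ (after regularization) in terms of Catalan's constant. I would first try the symmetry $s\mapsto1-s$ together with known moments like $\int_0^1\mathbf K(\sqrt s)^2ds$ and $\int_0^1\mathbf K(\sqrt s)\mathbf K(\sqrt{1-s})\,ds$. Failing that, I would compare the asymptotics at $t\to1^-$ with the known value of ${}_3F_2$-deformations at $x=1$.

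Finally, the validity range $|4t(1-t)|\le1$ with $\RE t\le\frac12$ follows by analytic continuation along the region where $t\mapsto4t(1-t)$ is a bijection onto the unit disk. Abel's theorem handles the boundary.
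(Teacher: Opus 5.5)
Your Step 1 is correct. For \eqref{eq:nu''} your plan can also be completed: verify the ODE, note that the difference is $a[\mathbf K(\sqrt t)]^2+b\,\mathbf K(\sqrt t)\mathbf K(\sqrt{1-t})+c[\mathbf K(\sqrt{1-t})]^2$, and let $t\to0^+$.

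\textbf{The gap is \eqref{eq:eps''}.} Beyond its generic shape, the entire content of that identity is the three constants $-\frac13$, $-1$, $\frac{16G}{\pi^2}$, and you leave these as the ``main obstacle'' with only speculative strategies. Matching at $t\to0^+$ with base point $\frac12$ is legitimate in principle, but only the $\log^2 t$ term cancels for free. The $\log t$ and constant terms require closed forms of regularized integrals such as $\int_0^{1/2}\bigl\{\frac{[\mathbf K(\sqrt s)]^2}{s(1-s)}-\frac{\pi^2}{4s}\bigr\}\D s$ and $\int_0^{1/2}\bigl\{\frac{\mathbf K(\sqrt s)\mathbf K(\sqrt{1-s})}{s(1-s)}-\frac{\pi}{4s}\log\frac{16}{s}\bigr\}\D s$. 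The proposal gives no way to evaluate them.

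Two smaller points. First, the symmetric-square Green's function carries the squared kernel $D(s,t)^2$, where $D(s,t)\colonequals\mathbf K(\sqrt{1-s})\mathbf K(\sqrt t)-\mathbf K(\sqrt s)\mathbf K(\sqrt{1-t})$. Your claim that one integration by parts turns this into $\mathbf K(\sqrt s)\mathbf K(\sqrt t)D(s,t)$ is unsupported; your fallback of direct verification would be needed instead. Second, differentiating the operator twice in $\nu$ gives $L\,\partial_\nu^2F=-2x(\theta+\frac12)F_{-1/2}$, not $+2x(\theta+\frac12)F_{-1/2}$.

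\textbf{The missing idea} is to use the Clausen-type identity with the parameters still free, not just its value at $\nu=-\frac12$, $\varepsilon=0$:
${}_3F_2(-\nu,1+\nu,\frac12;1+\varepsilon,1-\varepsilon;4t(1-t))=\frac{\varepsilon\pi}{\sin(\varepsilon\pi)}P^{-\varepsilon}_\nu(1-2t)P^{\varepsilon}_\nu(1-2t)$.

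For the $\nu$-case: at $\varepsilon=0$, using $\partial_\nu P_\nu|_{\nu=-1/2}=0$ (from $P_\nu=P_{-1-\nu}$), the first series becomes $-\frac14P_{-1/2}(1-2t)\,\partial_\nu^2P_\nu(1-2t)|_{\nu=-1/2}$. Here $u=\partial_\nu^2P_\nu$ solves the \emph{second-order} equation $\frac{\D}{\D t}[t(1-t)u']-\frac14u=-2P_{-1/2}(1-2t)$, with $u$ regular and vanishing at $t=0$. Its variation-of-parameters solution produces exactly the first-power kernel; the extra $\mathbf K(\sqrt t)$ is just the prefactor $P_{-1/2}$.

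For the $\varepsilon$-case: at $\nu=-\frac12$ the second series equals $\frac23[\mathbf K(\sqrt t)]^2-[\partial_\varepsilon P^\varepsilon_{-1/2}]^2+\frac2\pi\mathbf K(\sqrt t)\,\partial_\varepsilon^2P^\varepsilon_{-1/2}$. The first derivative is known: $\partial_\varepsilon P^\varepsilon_{-1/2}|_{\varepsilon=0}=\mathbf K(\sqrt{1-t})-\frac2\pi\mathbf K(\sqrt t)(\gamma_0+2\log2)$. The second derivative $u=\partial_\varepsilon^2P^\varepsilon_{-1/2}$ solves $\frac{\D}{\D t}[t(1-t)u']-\frac14u=\frac{P_{-1/2}(1-2t)}{2t(1-t)}$.

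The base point $\frac12$ is chosen because the integral and its $t$-derivative both vanish at $t=\frac12$, where the Legendre argument is $1-2t=0$. So the two free constants are read off from the values of $\partial^2_\varepsilon P^\varepsilon_{-1/2}$ and its $x$-derivative at the origin. These follow by differentiating twice in $\mu$ the classical formulas
$P^\mu_{-1/2}(0)=2^\mu\sqrt\pi/[\Gamma(\frac34-\frac\mu2)]^2$ and $\partial_xP^\mu_{-1/2}(0)=-2^{\mu+1}\sqrt\pi/[\Gamma(\frac14-\frac\mu2)]^2$.
Catalan's constant enters through $\psi'(\frac14)-\psi'(\frac34)=16G$. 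The $\gamma_0+2\log2$ terms then cancel against $[\partial_\varepsilon P^\varepsilon_{-1/2}]^2$, leaving exactly $-\frac13$, $-1$, $\frac{16G}{\pi^2}$.
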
\begin{proof}The first equalities in \eqref{eq:nu''} and \eqref{eq:eps''} are both consequences of termwise derivatives of the  $_3F_2$ series.

Before proceeding, we recall  from \cite[(3.21)]{Zhou2025NotesBHS} that  an identity of  Clausen's type \begin{align}
{_3}F_2\left(\left. \begin{array}{@{}c@{}}
-\nu,1+\nu,\frac{1}{2}\ \\1+\varepsilon,1-\varepsilon\ \\
\end{array} \right| 4t(1-t)\right)=\frac{\varepsilon\pi}{\sin(\varepsilon\pi)}P^{-\varepsilon}_\nu(1-2t)P^{\varepsilon}_\nu(1-2t)
\label{eq:Clausen'}\end{align}holds for  $ \varepsilon\notin\mathbb Z$,  $ |4t(1-t)|\leq1$ and $\RE t\leq \frac12 $, where (cf.\ \cite[(3.20)]{Zhou2025NotesBHS})\footnote{Throughout this article,  Euler's gamma function $ \Gamma(s)$ for $s\in\mathbb C\smallsetminus\mathbb Z_{\leq0}$ is defined by  analytically  continuing $ \Gamma(s)\colonequals \int_0^\infty t^{s-1}e^{-t}\D t$ for $ \RE s>0$. 
}\begin{align}
\begin{split}P^{-\varepsilon}_\nu(1-2t)\colonequals \frac{1}{\Gamma(1+\varepsilon)}{}&{_2}F_1\left(\left. \begin{array}{@{}c@{}}
-\nu,1+\nu\ \\
1+\varepsilon\ \\
\end{array} \right| t\right)\left(\frac{t}{1-t}\right)^{\varepsilon/2}
\end{split}\label{eq:Pnumu}
\end{align} is the associated Legendre function of degree $\nu$ and order $-\varepsilon$. In particular, $ P_\nu^{}\equiv P_\nu^0$ is the Legendre function of degree $\nu$, and $ P_{-1/2}(1-2t)=\frac{2}{\pi}\mathbf K\big(\sqrt{t}\big)$.

In view of the last paragraph, we may write the infinite series in  \eqref{eq:nu''} as \begin{align}
-\frac{P_{-1/2}(1-2t)}{4}\left.\!\frac{\partial^2P_\nu(1-2t)}{\partial\nu^2}\right|_{\nu=-1/2}=-\frac{\mathbf K\big(\sqrt{t}\big)}{2\pi}\left.\!\frac{\partial^2P_\nu(1-2t)}{\partial\nu^2}\right|_{\nu=-1/2}.
\end{align}   Here,  twice differentiating the Legendre differential equation \begin{align}
\widehat{\mathscr L}_{\nu}
 P_\nu(1-2t)\colonequals \frac{\partial }{\partial t}\left[ t(1-t)\frac{\partial P_\nu(1-2t)}{\partial t} \right]+\nu(1+\nu)P_\nu(1-2t)=0
\end{align}with respect to $\nu$ at $ \nu=-\frac12$, we get\begin{align}
\widehat{\mathscr L}_{-1/2}\left.\!\frac{\partial^2P_\nu(1-2t)}{\partial\nu^2}\right|_{\nu=-1/2}=-2P_{-1/2}(1-2t).
\end{align}Such an inhomogeneous Legendre differential equation for a second-order hypergeometric deformation [equipped with a natural boundary condition\begin{align}
\left.\!\frac{\partial^2P_\nu(1)}{\partial\nu^2}\right|_{\nu=-1/2}=0
\end{align} that descends from the identity $ P_\nu(1)\equiv1$] can be solved by variation of parameters. The corresponding result \begin{align}
\left.\!\frac{\partial^2P_\nu(1-2t)}{\partial\nu^2}\right|_{\nu=-1/2}=-\frac{16}{\pi^{2}}\int_0^t\mathbf K\big(\sqrt{\mathstrut s}\big)\big[\mathbf K\big(\sqrt{\mathstrut 1-s}\big)\mathbf K\big(\sqrt{\mathstrut t}\big)-\mathbf K\big(\sqrt{\mathstrut s}\big)\mathbf K\big(\sqrt{\mathstrut 1-t}\big)\big]\D s
\end{align}has  been essentially reported in \cite[Theorem 4.2 and Lemma 4.4]{Zhou2025NotesBHS}. This explains  \eqref{eq:nu''}  in full.

Before deducing the integral representation in \eqref{eq:eps''}, we rewrite the infinite series in question as\begin{align}
\frac{2\big[\mathbf K\big(\sqrt{t}\big)\big]^{2}}{3}-\left[\left.\!\frac{\partial P_{-1/2}^\varepsilon(1-2t)}{\partial\varepsilon}\right|_{\varepsilon=0} \right]^2+\frac{2\mathbf K\big(\sqrt{t}\big)}{\pi}\left.\!\frac{\partial^2 P_{-1/2}^{\varepsilon}(1-2t)}{\partial\varepsilon^2}\right|_{\varepsilon=0}.
\end{align}We may transcribe \cite[(2.15)]{Zhou2025NotesBHS} into\begin{align}
\left.\!\frac{\partial P_{-1/2}^\varepsilon(1-2t)}{\partial\varepsilon}\right|_{\varepsilon=0}=\mathbf K\big(\sqrt{1-t}\big)-\frac{2\mathbf K\big(\sqrt{t}\big)}{\pi}(\gamma_0+2\log2),\label{eq:eps'}
\end{align}  where $\gamma_0$ is the Euler--Mascheroni constant. Meanwhile, extracting the $ O(\varepsilon^2)$ term from the associated Legendre differential equation \begin{align}\widehat{\mathscr L}_{\nu,\varepsilon}^{}
 P_\nu^{\varepsilon}(1-2t)\colonequals \frac{\partial }{\partial t}\left[ t(1-t)\frac{\partial P_\nu^{\varepsilon}(1-2t)}{\partial t} \right]+\left[\nu(1+\nu)-\frac{\varepsilon^{2}}{4t(1-t)}\right]P_\nu^{\varepsilon}(1-2t)=0,
\end{align}we obtain an inhomogeneous version of the Legendre  differential equation:\begin{align}
\widehat{\mathscr L}_{-1/2}\left.\!\frac{\partial^2 P_{-1/2}^{\varepsilon}(1-2t)}{\partial\varepsilon^2}\right|_{\varepsilon=0}=\frac{P_{-1/2}(1-2t)}{2t(1-t)}.
\end{align}Solving this inhomogeneous equation by variation of parameters, we get\begin{align}
\begin{split}\left.\!\frac{\partial^2 P_{-1/2}^{\varepsilon}(1-2t)}{\partial\varepsilon^2}\right|_{\varepsilon=0}={}&\frac{4}{\pi^{2}}\int_{1/2}^t\mathbf K\big(\sqrt{\mathstrut s}\big)\big[\mathbf K\big(\sqrt{\mathstrut 1-s}\big)\mathbf K\big(\sqrt{\mathstrut t}\big)-\mathbf K\big(\sqrt{\mathstrut s}\big)\mathbf K\big(\sqrt{\mathstrut 1-t}\big)\big]\frac{\D s}{s(1-s)}\\{}&+c_1\mathbf  K\big(\sqrt{\mathstrut t}\big)+c_2\mathbf  K\big(\sqrt{\mathstrut 1-t}\big),
\end{split}
\end{align}  where $c_1$ and $c_2$  are constants. One may deduce \begin{align}
c_{1}={}&-\frac{\pi}{2}+\frac{2(\gamma_0+2\log2)^2}{\pi}\intertext{and}c_2={}&\frac{8G}{\pi}-2(\gamma_0+2\log2)
\end{align} from (cf.\ \cite[\S3.4(20) and \S3.4(23), the latter being corrected in the errata]{HTF1})
\begin{align}
\left.\!\frac{\partial^2 P_{-1/2}^{\varepsilon}(0)}{\partial\varepsilon^2}\right|_{\varepsilon=0}={}&P_{-1/2}(0)\left[4G+(\gamma_{0} +2 \log 2)^2-\pi  (\gamma _{0}+2 \log 2)-\frac{\pi ^2}{4}\right]\intertext{and}
\left.\!\frac{\partial^3 P_{-1/2}^{\varepsilon}(x)}{\partial\varepsilon^2\partial x}\right|_{\raisebox{.3em}{\ensuremath{\substack{\varepsilon=0\\x=0}}}}
={}&\left.\!\frac{\partial P_{-1/2}^{}(x)}{\partial x}\right|_{x=0}\left[-4 G-\frac{\pi ^2}{4}+(\gamma_{0} +2 \log 2)^2+\pi  (\gamma_{0} +2 \log 2) -\frac{\pi ^2}{4}\right].
\end{align}
Thus far, we have verified  \eqref{eq:eps''} in its entirety.\end{proof}Later on, we will reparametrize \eqref{eq:nu''}  and \eqref{eq:eps''} with $ t=\alpha_4(z)=\lambda(2z)$, using the modular lambda function given in \eqref{eq:lambda_defn}, together with  a key relation \begin{align}
z=\frac{i\mathbf K\big(\sqrt{1-\lambda(z)}\big)}{\mathbf K\big(\sqrt{\lambda(z)}\big)},\quad\text{where } \I z>0,|\RE z|<1,\left|z+\frac{1}{2}\right|>\frac{1}{2},\left|z-\frac{1}{2}\right|>\frac{1}{2}.
\label{eq:zKratio}\end{align}
\subsection{Lambert--Ramanujan series, Eichler integrals, and Epstein zeta functions\label{subsec:modH2}}
The standard Lambert series take the form\begin{align}
\sum_{n=1}^\infty \frac{a_n x^n}{1-x^{n}}.
\end{align}The Lambert--Ramanujan series\footnote{These were simply called ``Ramanujan series'' in \cite{EZF}. Here, we use the hyphenated terminology ``Lambert--Ramanujan'' to avoid confusion with other series \cite{Cooper2017Theta,GuilleraRogers2014} bearing   Ramanujan's name.} refer to the special cases where $ a_n=\frac{1}{n^{r}}$ for $r\in\mathbb Z_{>0}$, as attested in Ramanujan's second notebook   \cite[pp.~276--277]{RN2}. The Lambert--Ramanujan series satisfy functional equations like (cf.\ \cite[p.~276]{RN2} or \cite[Lemma 2.2]{EZF})\begin{align}&
\begin{split}&\frac{1}{(z/i)^m}\sum_{n=1}^\infty\frac{1}{n^{2m+1}\big(e^{2n\pi\frac{z}{i} }-1\big)}-(-z/i)^{m}\sum_{n=1}^\infty\frac{1}{n^{2m+1}\big(e^{2n\pi\frac{i}{z} }-1\big)}\\={}&\frac{\zeta(2m+2)}{2\pi}\left[\frac{1}{(z/i)^{m+1}}+(-z/i)^{m+1}\right]-\frac{\zeta(2m+1)}{2}\left[\frac{1}{(z/i)^{m}}-(-z/i)^{m}\right]\\{}&+\frac{1}{\pi}\sum_{k=0}^{m-1}\frac{\zeta(2k+2)\zeta(2m-2k)}{(-1)^{k}(z/i)^{m-2k-1}},
\end{split}\label{eq:Ramanujan_reflection_notebook}\end{align}where $ z\in\mathfrak H,m\in\mathbb Z_{>0}$. For $m=1$, we may rewrite the functional equation  above as a sum rule for Eichler integrals [defined in \eqref{eq:EichlerE4defn}]\begin{align}
\mathscr E_4(z)-z^2\mathscr E_4\left( -\frac{1}{z} \right)=-\frac{z^4-5 z^2+1}{3 z}-\frac{30 \zeta (3) \left(z^2-1\right) }{\pi ^3i},\label{eq:EichlerE4refl}
\end{align}
because the Lambert series representation of $ E_4(z)$ [the last term in \eqref{eq:E4defn}] leaves us\begin{align}\mathscr
E_4(z)=\frac{60 i}{\pi^3}\sum_{n=1}^\infty\frac{1}{n^3(e^{-2\pi in z}-1)}.\label{eq:Eichler4_defn}\tag{\ref{eq:EichlerE4defn}$'$}
\end{align}By extension, we will also name \begin{align}
\sum_{n=0}^\infty \frac{1}{(2n+1)^{r}}\frac{ x^{2n+1}}{1-x^{2n+1}}=\sum_{n=1}^\infty \frac{1}{n^{r}}\frac{ x^n}{1-x^{n}}-\frac{1}{2^{r}}\sum_{n=1}^\infty \frac{1}{n^{r}}\frac{ x^{2n}}{1-x^{2n}}
\end{align}after Lambert and Ramanujan.\begin{proposition}
If  $z$ satisfies the following constraints:\begin{align}
\I z>0,\;|\RE z|\leq\frac12,\;|4\alpha_4(z)[1-\alpha_4(z)]|\leq1,\;|z|\geq\frac{1}{2},\label{eq:z_ineq_H2}
\end{align}then
 we have {\allowdisplaybreaks\begin{align}\frac{\displaystyle \sum_{k=0}^\infty\binom{2k}{k}^3\left[ \mathsf H_{2k}^{(2)}-\frac{1}{4}\mathsf H_{ k}^{(2)} \right]\left\{\frac{\alpha_{4}(z)[1-\alpha_{4}(z)]}{2^{4}}\right\}^k}{\displaystyle  \sum_{k=0}^\infty\binom{2k}{k}^3\left\{\frac{\alpha_{4}(z)[1-\alpha_{4}(z)]}{2^{4}}\right\}^k}={}&\sum_{n=0}^\infty\frac{2}{(2n+1)^{2}\cosh^2\frac{(2n+1)\pi z_{}}{i}},\label{eq:LR1}\end{align}and\begin{align}\begin{split}&\frac{\displaystyle \sum_{k=0}^\infty\binom{2k}{k}^3\mathsf H_{ k}^{(2)}\left\{\frac{\alpha_{4}(z)[1-\alpha_{4}(z)]}{2^{4}}\right\}^k}{\displaystyle  \sum_{k=0}^\infty\binom{2k}{k}^3\left\{\frac{\alpha_{4}(z)[1-\alpha_{4}(z)]}{2^{4}}\right\}^k}\\={}&\sum_{n=1}^\infty\frac{2}{n^{2}\sinh^2\frac{4n\pi z _{}}{i}}-\sum_{n=1}^\infty\frac{1}{2n^{2}\sinh^2\frac{2n\pi z _{}}{i}}+\sum_{n=0}^\infty\frac{2}{(2n+1)^{2}\sinh^2\frac{(2n+1)\pi z _{}}{i}},
\end{split}\label{eq:LR2}
\end{align}}where $\alpha_4(z) =\lambda(2z)$ [see \eqref{eq:lambda_defn}]. \end{proposition}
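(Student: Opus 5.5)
Our plan is to take the integral representations of Lemma \ref{lm:var_para} with $t=\alpha_4(z)=\lambda(2z)$ and convert them into $q$-series in $z$. The denominators in \eqref{eq:LR1}--\eqref{eq:LR2} are handled by Clausen's formula \eqref{eq:Clausen'} at $\varepsilon=0$, $\nu=-\frac12$, which gives
\[\sum_{k}\binom{2k}k^3\Bigl[\tfrac{t(1-t)}{16}\Bigr]^k=\Bigl(\tfrac{2}{\pi}\Bigr)^2\bigl[\mathbf K(\sqrt t)\bigr]^2 .\]
The constraints \eqref{eq:z_ineq_H2} are precisely what is needed for two things: $t$ stays in the region $|4t(1-t)|\le1$, $\RE t\le\frac12$ of Lemma \ref{lm:var_para}, and the relation \eqref{eq:zKratio} (applied at $2z$) gives $2z=i\mathbf K(\sqrt{1-t})/\mathbf K(\sqrt t)$. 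Each left-hand side therefore becomes $\frac{\pi^2}{4\mathbf K(\sqrt t)^2}$ times an integral.

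Next we change variables $s=\lambda(2w)$ in the integrals, with $w$ running along a path in $\mathfrak H$ from $i\infty$ (where $s=0$) to $z$; for \eqref{eq:eps''} the path starts at $w=i/2$ (where $s=\frac12$). Using $\mathbf K(\sqrt{1-s})=-2iw\,\mathbf K(\sqrt s)$, the bracket becomes
\[\mathbf K(\sqrt s)\mathbf K(\sqrt t)\cdot 2i(z-w)\cdot\Bigl(-\frac{1}{i}\Bigr)\cdots,\]
that is, a multiple of $(w-z)\mathbf K(\sqrt s)\mathbf K(\sqrt t)$. The classical formula $\D\lambda/\D\tau=\pi i\lambda(1-\lambda)\theta_3^4$, together with $\frac{2}{\pi}\mathbf K(\sqrt{\lambda(\tau)})=\theta_3(\tau)^2$, turns $\D s$ into a theta expression. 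After dividing by $\mathbf K(\sqrt t)^2$, the integrand is $(w-z)$ times a weight-$4$ form:
\begin{itemize}
\item for \eqref{eq:nu''}, $\theta_3(2w)^8\lambda(2w)(1-\lambda(2w))$, a weight-4 form on $\Gamma_0(4)$-type groups (essentially $\eta$-quotients of weight 4);
\item for \eqref{eq:eps''}, where $\D s/[s(1-s)]$ removes the factor $\lambda(1-\lambda)$, simply $\theta_3(2w)^8$, plus the boundary terms.
\end{itemize}
Expanding these weight-4 forms as Eisenstein series for $\Gamma_0(4)$ in Lambert form, with coefficients like $\sum_{d\mid n}(-1)^{\cdot}d^3$, we integrate $(w-z)$ against each $q$-series term by term. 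This turns $\sum n^3 q^n/(1-q^n)$ into $\sum n^{-1}\cdot\frac{\text{stuff}}{(1-q^n)^2}$-type series, which is exactly the origin of the $1/(n^2\sinh^2)$ and $1/(n^2\cosh^2)$ summands on the right.

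For \eqref{eq:LR1} we expect the relevant weight-4 form $\theta_3^8\lambda(1-\lambda)$ to be, up to a constant, $\theta_2^4\theta_4^4\theta_3^4$-like. We would check that it equals $\sum_{n\text{ odd}} n^3 q^{n}/(1+q^{n})$-type (a cusp-vanishing-at-$i\infty$ Eisenstein combination). Double integration then gives $\sum_{n\text{ odd}} \frac{2}{n^2\cosh^2(n\pi z/i)}$ with no constant or polynomial terms, which is consistent with the boundary value $0$ at $t=0$.

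For \eqref{eq:LR2}, $\theta_3(2w)^8$ has constant term $1$, so term-by-term integration produces polynomial terms in $z$. These must cancel against $-\frac{\mathbf K(\sqrt t)^2}{3}-\mathbf K(\sqrt{1-t})^2+\frac{16G}{\pi^2}\mathbf K\mathbf K'$ after division, that is, against $-\frac{\pi^2}{12}-\pi^2(z/i)^2\cdot(\ldots)+\frac{4G}{i}\cdots$. The lower limit $w=i/2$ also contributes constants, involving $\sum(\ldots)$ at $z=i/2$, which should produce exactly the Catalan constant.

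We expect the main obstacle to be this bookkeeping: verifying that the constants from the lower endpoint $i/2$ (Lambert series evaluated at $e^{-\pi}$-type nomes) collapse to $G$ and rational multiples of $\pi^2$. The first thing we would try is the modular involution $w\mapsto -1/(4w)$, which fixes $i/2$. Splitting $\int_{1/2}^t$ into $\int_0^t-\int_0^{1/2}$ and evaluating the latter by symmetry, with the $\varepsilon$-analog of Ramanujan's reflection formula \eqref{eq:Ramanujan_reflection_notebook} at $m=0$/odd characters, should yield $G=L_{-4}(2)$. Finally, we would confirm the identity near $z=i\infty$ (where $t\to0$) by comparing leading $q$-expansion terms, and extend it to the whole region \eqref{eq:z_ineq_H2} by analytic continuation.
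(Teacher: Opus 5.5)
Your argument for \eqref{eq:LR1} works; it is essentially the computation behind \cite[(4.8)]{EZF}, which the paper differentiates in $z$. The weight-$4$ form there is exactly $\theta_2(2w)^4\theta_4(2w)^4=\frac1{15}\big[E_4(2w)-E_4\big(w+\frac12\big)\big]$, with no extra $\theta_3^4$.

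Your reduction for \eqref{eq:LR2} is also correct as far as it goes. Put $z=iy$, $w=iv$ and $g(w)=\theta_3(2w)^8=1+f(w)$. The integral in \eqref{eq:eps''}, divided by the denominator, becomes $2\pi^2\int_{1/2}^{y}(y-v)\,g(iv)\D v$. The constant term of $g$ cancels the $-\pi^2y^2$ produced by $-\big[\mathbf K\big(\sqrt{1-t}\big)\big]^2$. What remains is $\frac{\pi^2}{6}+(8G-\pi^2)y$ plus the contribution of $f$.

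You cannot literally split at $s=0$, since $\int_0 \frac{\D s}{s(1-s)}$ diverges against $\big[\mathbf K(\sqrt s)\big]^2\mathbf K\big(\sqrt{1-s}\big)$. The split is only legitimate after removing the constant term. The paper avoids this by writing $\frac1{s(1-s)}=\frac1s+\frac1{1-s}$ and integrating the $\frac1s$ part over $[t,1]$.

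Since the left-hand side of \eqref{eq:LR2} tends to $0$ as $y\to\infty$, the whole proof rests on the two endpoint constants
\[
\int_{1/2}^{\infty}f(iv)\D v=\frac12-\frac{4G}{\pi^2},\qquad \int_{1/2}^{\infty}v\,f(iv)\D v=\frac1{12}.
\]
This is where your plan has a genuine gap.

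The involution $w\mapsto-\frac1{4w}$ cannot produce the first constant. From $g\big(-\frac1{4w}\big)=16w^4g(w)$, it maps the moment of $g$ against $w$ on $[\frac i2,i\infty)$ to the same moment on $[0,\frac i2]$. So the second constant is half of a regularized full period, i.e.\ an $L$-value of $\theta_3^8$ involving $\zeta(2)\zeta(-1)$; that part of your plan works. However, the involution maps the moment against $1$ to the moment against $4w^2$. The anti-invariant combination $1-4w^2$ is therefore left undetermined, and the zeroth moment is not fixed by any functional equation.

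This is the same obstruction the paper meets in Proposition \ref{prop:E4''sum}: at a fixed point, \eqref{eq:EichlerE4''refl} gives only one relation between $\mathscr E_4'$ and $\mathscr E_4''$. The Catalan constant is a genuinely arithmetic (CM) input. The paper obtains it from the lattice sums $E\big(\frac i2,2\big)=E(2i,2)=\frac{105G}{2\pi^2}$ and $E(i,2)=\frac{30G}{\pi^2}$, which rest on the factorization through $\zeta(s)L_{-4}(s)$. Feeding these into \eqref{eq:EZF_LR} gives $\int_0^{1/2}\frac{[\mathbf K(\sqrt s)]^2}{1-s}\D s-\int_{1/2}^1\frac{[\mathbf K(\sqrt s)]^2}{s}\D s=\frac{7\zeta(3)}2-2\pi G$.

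Your appeal to ``the $\varepsilon$-analog of Ramanujan's reflection formula at $m=0$/odd characters'' does not supply this. You need the Epstein zeta evaluation, or an equivalent arithmetic identity, to cancel the $8Gy$ term. Once both constants are in hand, your direct term-by-term expansion at $i\infty$ does yield the right-hand side of \eqref{eq:LR2}, without the paper's final use of \eqref{eq:EichlerE4refl}.
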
\begin{proof}From \cite[(4.8)]{EZF}, we know that \begin{align}
\int_0^{\lambda(2z)}\frac{\big[\mathbf K\big(\sqrt{\smash[b]s}\big)\big]^2}{8}\left[ \frac{i\mathbf K\big(\sqrt{1-s}\big)}{\mathbf K\big(\sqrt{s}\big)} -2z\right]^2\D s=-\sum_{n=0}^\infty\frac{1}{(2n+1)^{3}\big[e^{(2n+1)\pi \frac{2z}{i}}+1\big]}\frac{}{}
\end{align}for $z/i>0$. Differentiating both sides of the equation above in $z$, before recalling  \cite[(3.15)]{Zhou2025NotesBHS}\footnote{In view of \eqref{eq:zKratio}, \eqref{eq:z_ineq_H2}, and \eqref{eq:KK}, the denominators on the left-hand sides of \eqref{eq:LR1} and \eqref{eq:LR2} never vanish, thus
making the two quotients well-defined. }\begin{align}
\sum_{k=0}^\infty\binom{2k}{k}^3\left[\frac{t(1-t)}{2^{4}}\right]^k=\left( \frac{2}{\pi} \right)^{2}\big[\mathbf K\big(\sqrt{t}\big)\big]^2,\quad \text{where }|4t(1-t)|<1,\RE t<\frac{1}{2}\label{eq:KK}
\end{align} along with  \eqref{eq:nu''} and \eqref{eq:zKratio}, we are able to confirm \eqref{eq:LR1} for  $2z/i\geq1$ \big[such that $ 0<\lambda(2z)\leq\frac12$\big]. For $z$ fulfilling the requirements in  \eqref{eq:z_ineq_H2}, one can prove by analytic continuation.\footnote{Note that the left-hand side of  \eqref{eq:LR1} cannot be analytically continued across the barrier $ |z|=\frac12$, on which $ |4\alpha_4(z)[1-\alpha_4(z)]|\geq1$. Therefore, although the left-hand side of  \eqref{eq:LR1} is ostensibly invariant under the transformation $ z\mapsto -\frac{1}{4z}$, the right-hand side of \eqref{eq:LR1}  does not enjoy such an invariance. The same can be said for \eqref{eq:LR2}.}

As a precursor to the verification of \eqref{eq:LR2}, we reformulate the integral representation in   \eqref{eq:eps''} as{\allowdisplaybreaks\begin{align}
\begin{split}&\sum_{k=0}^\infty\binom{2k}{k}^3\mathsf H_{ k}^{(2)}\left[\frac{t(1-t)}{2^{4}}\right]^k\\={}&\left(\frac{2}{\pi}\right)^{3}\int_{0}^t\mathbf K\big(\sqrt{\mathstrut s}\big)\mathbf K\big(\sqrt{\mathstrut t}\big)\big[\mathbf K\big(\sqrt{\mathstrut 1-s}\big)\mathbf K\big(\sqrt{\mathstrut t}\big)-\mathbf K\big(\sqrt{\mathstrut s}\big)\mathbf K\big(\sqrt{\mathstrut 1-t}\big)\big]\frac{\D s}{1-s}\\&-\left(\frac{2}{\pi}\right)^{3}\int_{t}^1\mathbf K\big(\sqrt{\mathstrut s}\big)\mathbf K\big(\sqrt{\mathstrut t}\big)\big[\mathbf K\big(\sqrt{\mathstrut 1-s}\big)\mathbf K\big(\sqrt{\mathstrut t}\big)-\mathbf K\big(\sqrt{\mathstrut s}\big)\mathbf K\big(\sqrt{\mathstrut 1-t}\big)\big]\frac{\D s}{s}\\{}&-\frac{\big[\mathbf K\big(\sqrt{\mathstrut t}\big)\big]^{2}}{3}-\big[\mathbf K\big(\sqrt{\mathstrut 1-t}\big)\big]^{2}+\frac{28\mathbf K\big(\sqrt{\mathstrut t}\big)\mathbf K\big(\sqrt{\mathstrut 1-t}\big)\zeta(3)}{\pi^{3}}.\end{split}\label{eq:'eps''}\tag{\ref{eq:eps''}$'$}
\end{align}}To justify this reformulation, it suffices to show that\begin{align}
\int_{0}^{1/2}\frac{[\mathbf K(\sqrt{\smash[b]s})]^2}{1-s}\D s-\int_{1/2}^{1}\frac{[\mathbf K(\sqrt{\smash[b]s})]^2}{s}\D s=\frac{7\zeta(3)}{2}-2\pi G.
\end{align}      Towards this end, we note that \cite[(4.7)]{EZF}\begin{align}
\int_0^{\lambda(2z)}\frac{\big[\mathbf K\big(\sqrt{\smash[b]s}\big)\big]^2}{8}\left[ \frac{i\mathbf K\big(\sqrt{1-s}\big)}{\mathbf K\big(\sqrt{s}\big)} -2z\right]^2\frac{\D s}{1-s}=-\sum_{n=0}^\infty\frac{1}{(2n+1)^{3}\big[e^{(2n+1)\pi \frac{2z}{i}}-1\big]}\label{eq:sinh_prep}
\end{align}holds for  $z/i>0$, and its leading-order Taylor expansion around $ z=\frac{i}{2}$ yields a pair of identities:{\allowdisplaybreaks\begin{align}
\begin{split}&8\sum_{n=0}^\infty\frac{1}{(2n+1)^{3}\big[e^{(2n+1)\pi }-1\big]}=\int_0^{1/2}\frac{\big[\mathbf K\big(\sqrt{\smash[b]s}\big)\big]^2}{1-s}\left[ \frac{\mathbf K\big(\sqrt{1-s}\big)}{\mathbf K\big(\sqrt{s}\big)} -1\right]^2\D s\\={}&\int_{0}^{1/2}\frac{\big[\mathbf K\big(\sqrt{\smash[b]s}\big)\big]^2}{1-s}\D s+\int_{1/2}^{1}\frac{\big[\mathbf K\big(\sqrt{\smash[b]s}\big)\big]^2}{s}\D s-2\int_{0}^{1/2}\frac{\mathbf K\big(\sqrt{\smash[b]s}\big)\mathbf K\big(\sqrt{1-s}\big)}{1-s}\D s,
\end{split}\\\begin{split}&\pi\sum_{n=0}^\infty\frac{1}{(2n+1)^{2}\sinh^2\frac{(2n+1)\pi _{}}{2}}=\int_0^{1/2}\frac{\big[\mathbf K\big(\sqrt{\smash[b]s}\big)\big]^2}{1-s}\left[ \frac{\mathbf K\big(\sqrt{1-s}\big)}{\mathbf K\big(\sqrt{s}\big)} -1\right]\D s\\={}&\int_{0}^{1/2}\frac{\mathbf K\big(\sqrt{\smash[b]s}\big)\mathbf K\big(\sqrt{1-s}\big)}{1-s}\D s-\int_{0}^{1/2}\frac{\big[\mathbf K\big(\sqrt{\smash[b]s}\big)\big]^2}{1-s}\D s,
\end{split}
\end{align}}which combine into\begin{align}
\begin{split}&\int_{1/2}^{1}\frac{\big[\mathbf K\big(\sqrt{\smash[b]s}\big)\big]^2}{s}\D s-\int_{0}^{1/2}\frac{\big[\mathbf K\big(\sqrt{\smash[b]s}\big)\big]^2}{1-s}\D s\\={}&8\sum_{n=0}^\infty\frac{1}{(2n+1)^{3}\big[e^{(2n+1)\pi }-1\big]}+2\pi\sum_{n=0}^\infty\frac{1}{(2n+1)^{2}\sinh^2\frac{(2n+1)\pi _{}}{2}}\\={}&-\frac{7\zeta(3)}{2}+\frac{2\pi^{3}}{3}\frac{4E\big(\frac{i}{2},2\big)-E(i,2)}{60}.
\end{split}
\end{align}   Here in the last step, we have set $z=\frac{i}{2} $ in the following formula applicable to all $z\in\mathfrak H$:\begin{align}
\begin{split}&\frac{4E(z,2)-E(2z,2)}{60}\\={}&\frac{21 \zeta (3)}{8 \pi ^3\I z}+\frac{6}{\pi^3\I z}\RE\sum_{n=0}^\infty\frac{1}{(2n+1)^{3}\big[e^{(2n+1)\pi\frac{2z}{i} }-1\big]}+\frac{3}{\pi^{2}}\RE\sum_{n=0}^\infty\frac{1}{(2n+1)^{2}\sinh^2\frac{(2n+1)\pi z _{}}{i}},\end{split}\label{eq:EZF_LR}
\end{align}which in turn, descends from special cases of \cite[(1.9) and (3.5)]{EZF}. Since the evaluations [cf.\ \eqref{eq:EZF_defn}]\begin{align}
E\left(\frac{i}{2},2\right)=E(2i,2)=\frac{105G}{2\pi^{2}}\text{ and }E(i,2)=\frac{30G}{\pi^{2}}
\end{align}can be found in   \cite[Table VI]{GlasserZucker1980} (also reproduced in Table \ref{tab:H2} of this article), we have completed our justification of \eqref{eq:'eps''}. Now, appealing again to the leading-order Taylor expansion of \eqref{eq:sinh_prep}, we have\begin{align}
\int_0^{\lambda(2z)}\frac{\big[\mathbf K\big(\sqrt{\smash[b]s}\big)\big]^2}{1-s}\left[ \frac{i\mathbf K\big(\sqrt{1-s}\big)}{\mathbf K\big(\sqrt{s}\big)} -2z\right]\D s={}&\pi i\sum_{n=0}^\infty\frac{1}{(2n+1)^{2}\sinh^2\frac{(2n+1)\pi z _{}}{i}}
\end{align}and{\allowdisplaybreaks\begin{align}
\begin{split}&\int^1_{\lambda(2z)}\frac{\big[\mathbf K\big(\sqrt{\smash[b]s}\big)\big]^2}{s}\left[ \frac{i\mathbf K\big(\sqrt{1-s}\big)}{\mathbf K\big(\sqrt{s}\big)} -2z\right]\D s=\int_0^{\lambda\left( -\frac{1}{2z} \right)}\frac{\big[\mathbf K\big(\sqrt{\smash[b]1-s}\big)]^2}{1-s}\left[ \frac{i\mathbf K\big(\sqrt{s}\big)}{\mathbf K\big(\sqrt{1-s}\big)} -2z\right]\D s\\={}&2z\int_0^{\lambda\left( -\frac{1}{2z} \right)}\frac{\big[\mathbf K\big(\sqrt{\smash[b]s}\big)\big]^2}{1-s}\left[ \frac{i\mathbf K\big(\sqrt{1-s}\big)}{\mathbf K\big(\sqrt{s}\big)} +\frac{1}{2z}\right]^{2}\D s\\{}&-\int_0^{\lambda\left( -\frac{1}{2z} \right)}\frac{\big[\mathbf K\big(\sqrt{\smash[b]s}\big)\big]^2}{1-s}\left[ \frac{i\mathbf K\big(\sqrt{1-s}\big)}{\mathbf K\big(\sqrt{s}\big)} +\frac{1}{2z}\right]\D s\\={}&-16z\sum_{n=0}^\infty\frac{1}{(2n+1)^{3}\big[e^{(2n+1)\frac{\pi i}{2z} }-1\big]}-\pi i\sum_{n=0}^\infty\frac{1}{(2n+1)^{2}\sinh^2\frac{(2n+1)\pi i _{}}{4z}}\frac{}{}\end{split}
\end{align}}for $z/i>0$. Thus, we have identified the left-hand side of  \eqref{eq:LR2} with\begin{align}
\begin{split}&\frac{14 z \zeta (3)}{\pi i}+\left( z^2 -\frac{1}{12}\right)\pi^2+\frac{z}{\pi}\sum_{n=0}^\infty\frac{32}{(2n+1)^{3}\big[e^{(2n+1)\frac{\pi i}{2z} }-1\big]}\\&+\sum_{n=0}^\infty\frac{2}{(2n+1)^{2}\sinh^2\frac{(2n+1)\pi i _{}}{4z}}+\sum_{n=0}^\infty\frac{2}{(2n+1)^{2}\sinh^2\frac{(2n+1)\pi z _{}}{i}}.\end{split}
\end{align}This expression is clearly real-valued for    $2z/i>1 $ [as is  the left-hand side of  \eqref{eq:LR2}], but not manifestly so when $   \RE z=\frac12$ and $ \I z\geq\frac{1}{\sqrt{2}}$.  As an amendment for the latter, we differentiate the  reflection formula \eqref{eq:EichlerE4refl}  with respect to $z$, so as to deduce\begin{align}
\begin{split}&\frac{14 z \zeta (3)}{\pi i}+\left( z^2 -\frac{1}{12}\right)\pi^2+\frac{z}{\pi}\sum_{n=0}^\infty\frac{32}{(2n+1)^{3}\big[e^{(2n+1)\frac{\pi i}{2z} }-1\big]}+\sum_{n=0}^\infty\frac{2}{(2n+1)^{2}\sinh^2\frac{(2n+1)\pi i _{}}{4z}}\\={}&\sum_{n=1}^\infty\frac{2}{n^{2}\sinh^2\frac{4n\pi z _{}}{i}}-\sum_{n=1}^\infty\frac{1}{2n^{2}\sinh^2\frac{2n\pi z _{}}{i}}.
\end{split}
\end{align}This concludes our proof of \eqref{eq:LR2}  for   $2z/i>1 $, as well as (after analytic continuation) for $z$ in  the general setting  \eqref{eq:z_ineq_H2}.
\end{proof}

With the foregoing preparations, we are ready for the verification of our first major result  in this article.\begin{proof}[Proof of Theorem \ref{thm:H2}]One may translate the inequality $ 2z/i\geq1$ \big(resp.\ the relations $ \RE z=\frac12$ and $ \I z\geq\frac{1}{\sqrt{2}}$\big) into $  4\alpha_4(z)[1-\alpha_4(z)]\in(0,1]$ [resp.\ $4\alpha_4(z)[1-\alpha_4(z)]\in[-1,0)$], so the conditions in  \eqref{eq:z_ineq_H2} are met.

\begin{enumerate}[leftmargin=*,  label=(\alph*),ref=(\alph*),
widest=d, align=left] \item
Comparing  \eqref{eq:LR1} and \eqref{eq:EZF_LR},  we get {\allowdisplaybreaks\begin{align}\begin{split}\mathscr Q_1(z)\colonequals{} &
\frac{\displaystyle \sum_{k=0}^\infty\binom{2k}{k}^3\left[ \mathsf H_{2k}^{(2)}-\frac{1}{4}\mathsf H_{ k}^{(2)} \right]\left\{\frac{\alpha_{4}(z)[1-\alpha_{4}(z)]}{2^{4}}\right\}^k}{\displaystyle  \sum_{k=0}^\infty\binom{2k}{k}^3\left\{\frac{\alpha_{4}(z)[1-\alpha_{4}(z)]}{2^{4}}\right\}^k}\\={}&\frac{7\zeta(3)}{4\pi\I z }-\frac{\pi^2\left[4 E \big(z+\frac12,2\big)-E(2z,2)\right]}{90}\\{}&+\frac{4}{\pi\I z}\sum_{n=0}^\infty\frac{}{}\frac{1}{(2n+1)^{3}\big[e^{(2n+1)\frac{\pi (2z+1)}{i} }-1\big]}.
\end{split}
\end{align}}Referring back to \eqref{eq:Eichler4_defn}, we may convert the last infinite series into a linear combination of two Eichler integrals, as declared in \eqref{eq:Q1}.

Putting \eqref{eq:LR2} together with \eqref{eq:EZF_LR} and its analog  \cite[(1.8), $ m=1$]{EZF}\begin{align}
\begin{split}E(z,2)={}&(\I z)^2+\frac{45\zeta(3)}{\pi^3\I z}+\frac{90}{\pi^3\I z}\RE\sum_{n=1}^\infty\frac{1}{n^{3}\big(e^{\frac{2n\pi z}{i}}-1\big)}\\&+\frac{45}{\pi^{2}}\RE\sum_{n=1}^\infty\frac{1}{n^{2}\sinh^2\frac{n\pi z _{}}{i}},\quad z\in\mathfrak H,
\end{split}\label{eq:E(z,2)LR}
\end{align} we obtain{\allowdisplaybreaks
\begin{align}
\begin{split}\mathscr Q_2(z)\colonequals{} &\frac{\displaystyle \sum_{k=0}^\infty\binom{2k}{k}^3\mathsf H_{ k}^{(2)}\left\{\frac{\alpha_{4}(z)[1-\alpha_{4}(z)]}{2^{4}}\right\}^k}{\displaystyle  \sum_{k=0}^\infty\binom{2k}{k}^3\left\{\frac{\alpha_{4}(z)[1-\alpha_{4}(z)]}{2^{4}}\right\}^k}\\={}&-\frac{2\pi^2}{3}(\I z)^2-\frac{2\zeta(3)}{\pi\I z }+\frac{\pi^2\left[2 E (z,2)-E(2z,2)+2E(4z,2)\right]}{45}\\&{}+\frac{1}{2\pi \I z}\sum_{n=1}^\infty\frac{1}{n^{3}\big(e^{\frac{4n\pi z}{i}}-1\big)}-\frac{1}{\pi \I z}\sum_{n=1}^\infty\frac{1}{n^{3}\big(e^{\frac{8n\pi z}{i}}-1\big)}\\&{}-\frac{4}{\pi\I z}\sum_{n=0}^\infty\frac{}{}\frac{1}{(2n+1)^{3}\big[e^{(2n+1)\frac{\pi (2z)}{i} }-1\big]},\end{split}
\end{align}}the right-hand side of which is equivalent to\begin{align}
\begin{split}&-\frac{2\pi^2}{3}(\I z)^2-\frac{2\zeta(3)}{\pi\I z }+\frac{\pi^2\left[2 E (z,2)-E(2z,2)+2E(4z,2)\right]}{45}\\&{}+\frac{\pi^2 i\left[4\mathscr  E_{4} (z)-\mathscr  E_{4}(2z)+\mathscr  E_{4}(4z)\right]}{60\I z},
\end{split}
\end{align} by virtue of  \eqref{eq:Eichler4_defn}. To bridge the gap between the last displayed formula and the statement in \eqref{eq:Q2}, we need to manipulate series of the  Lambert(--Ramanujan) type, to establish\begin{align}
0={}&E_4\left( z+\frac{1}{2} \right)+E_4(z)-18E_{4}(2z)+16E_4(4z)\label{eq:sumE4}\intertext{and}0={}&4\mathscr E_4\left( z+\frac{1}{2} \right)+4\mathscr E_4(z)-9\mathscr E_{4}(2z)+\mathscr E_4(4z)\label{eq:sumEichlerE4}
\end{align}for all $ z\in\mathfrak H$, before putting \eqref{eq:sumE4} into the context of \begin{align}
E(z,2)={}&(\I z)^2+\frac{45\zeta(3)}{\pi^3\I z}-\frac{3}{2\I z}\RE\left\{ i\int_z^{i\infty}[1-E_4(w)](w-z)(w-\overline z)\D w \right\}
\tag{\ref{eq:E(z,2)LR}$'$}\label{eq:E(z,2)LR'}
\end{align} and constructing a sum rule for the Epstein zeta functions\begin{align}
0=2E\left( z+\frac{1}{2} ,2\right)+2E(z,2)-9E(2z,2)+2E(4z,2)\label{eq:E(z,2)add}
\end{align}  that is applicable to all $ z\in\mathfrak H$.
\item By \cite[(2.17) and (3.25)]{Zhou2025NotesBHS}, we have \begin{align}\begin{split}&
\frac{\partial}{\partial \I z }\frac{\left\{\alpha_{4}(z)[1-\alpha_{4}(z)]\right\}^{k}}{\big[\mathbf K\big(\sqrt{\alpha_4(z)}\big)\big]^2\I z}\\={}&-\frac{4}{\pi}\frac{2[1-2\alpha_4(z)]k+ R_{-1/2}(1-2\alpha_4(z))}{\I z}\left\{\alpha_{4}(z)[1-\alpha_{4}(z)]\right\}^{k}.
\end{split}
\end{align}Simplifying the denominator of $ \mathscr Q_1(z)$ with \eqref{eq:KK}, and referring to the last displayed equation, we arrive at \begin{align}
\begin{split}&\mathscr R_1(z)\\\colonequals {}&\sum_{k=0}^\infty\binom{2k}{k}^3\left[ \mathsf H_{2k}^{(2)}-\frac{1}{4}\mathsf H_{ k}^{(2)} \right]\frac{2[1-2\alpha_4(z)]k+ R_{-1/2}(1-2\alpha_4(z))}{\I z}\left\{\frac{\alpha_{4}(z)[1-\alpha_{4}(z)]}{2^{4}}\right\}^k
\\={}&\frac{\mathscr Q_1(z)}{\pi(\I z)^2}+\frac{4}{\I z}\sum_{n=0}^\infty\frac{\tanh\frac{(2n+1)\pi z _{}}{i}}{(2n+1)\cosh^2\frac{(2n+1)\pi z}{i}}.
\end{split}\label{eq:R1(z)''}
\end{align}To convert the remaining series into the $ \mathscr E''_4$ expressions in  \eqref{eq:R1(z)},  twice differentiate \eqref{eq:Eichler4_defn}.

Arguing in a similar manner, we  have{\allowdisplaybreaks
\begin{align}
\begin{split}&\mathscr R_2(z)\\\colonequals {}&\sum_{k=0}^\infty\binom{2k}{k}^3\mathsf H_{ k}^{(2)}\frac{2[1-2\alpha_4(z)]k+ R_{-1/2}(1-2\alpha_4(z))}{\I z}\left\{\frac{\alpha_{4}(z)[1-\alpha_{4}(z)]}{2^{4}}\right\}^k
\\={}&\frac{\mathscr Q_2(z)}{\pi(\I z)^2}-\frac{2}{\I z}\sum_{n=1}^\infty\frac{\coth\frac{2n\pi z _{}}{i}}{n\sinh^2\frac{2n\pi z}{i}}+\frac{16}{\I z}\sum_{n=1}^\infty\frac{\coth\frac{4n\pi z _{}}{i}}{n\sinh^2\frac{4n\pi z}{i}}\\{}&+\frac{4}{\I z}\sum_{n=0}^\infty\frac{\coth\frac{(2n+1)\pi z _{}}{i}}{(2n+1)\sinh^2\frac{(2n+1)\pi z}{i}},\end{split}
\end{align}
}which implies   \eqref{eq:R2(z)}, thanks to   the second-order derivatives of both \eqref{eq:Eichler4_defn} and \eqref{eq:sumEichlerE4}.
    \qedhere\end{enumerate} \end{proof}\subsection{Special  sum rules for $ \mathscr E_4$ and derivatives\label{subsec:sumEichler}}Now, we evaluate $ \mathscr S_{r(z)}(z)$ [defined in \eqref{eq:Sr(z)}] and  $ \mathscr T_{r(z)}(z)$ [defined in \eqref{eq:Tr(z)}] for some special points  $z$. In this process, we will repeatedly invoke the sum rules  \eqref{eq:EichlerE4refl} and  \eqref{eq:sumEichlerE4} for the Eichler integrals, as well as (tacitly) the translational invariance $ \mathscr E_4(z)=\mathscr E_4(z+1)$. These efforts will amount to the final proofs of \eqref{eq:Sun1}--\eqref{eq:Sun4} in Proposition \ref{prop:E4sum} and refreshed views of \eqref{eq:-64H2}--\eqref{eq:4096H2} in Proposition \ref{prop:E4''sum}.\begin{proposition}\label{prop:E4sum}
\begin{enumerate}[leftmargin=*,  label=\emph{(\alph*)},ref=(\alph*),
widest=d, align=left] \item
We have \begin{align}
\mathscr E_4\left( \frac{1+\sqrt{3}i}{2} \right)=\frac{2i}{\sqrt{3}}+\frac{30\zeta(3)}{\pi^3i},\label{eq:EichlerE4isqrt3}
\end{align} which entails\begin{align}
\mathscr S_{1/16}\left( \frac{\sqrt{3}i}{2} \right)=\frac{11\pi^{2}}{96}.\label{eq:S16isqrt3}
\end{align}
\item We have \begin{align}
12\mathscr E_4\left( \frac{1+\sqrt{7} i}{2}\right)-\mathscr E_4\left( \sqrt{7} i\right)=\frac{29 \sqrt{7} i}{6}+\frac{330  \zeta (3)}{\pi ^3i},\label{eq:EichlerE4isqrt7}
\end{align}which entails \begin{align}
\mathscr S_{1/46}\left( \frac{\sqrt{7}i}{2} \right)=\frac{43\pi^{2}}{552}.\label{eq:S46isqrt7}
\end{align}
\item We have \begin{align}2\mathscr E_4\left( \frac{i}{\sqrt{2}} \right)+
\mathscr E_4\left(\sqrt{2}i\right)=\frac{5i}{\sqrt{2}}+\frac{90\zeta(3)}{\pi^3i},\label{eq:EichlerE4isqrt2}
\end{align} which entails\begin{align}
\mathscr S_{1/4}\left( \frac{1}{2} +\frac{i}{\sqrt{2}}\right)=\frac{5\pi^{2}}{24}.\label{eq:S4isqrt2}
\end{align}
\item We have \begin{align}
\mathscr E_4(i)=\frac{7i}{6}+\frac{30\zeta(3)}{\pi^3i},\label{eq:EichlerE4i}
\end{align} which entails\begin{align}
\mathscr S_{1/16}\left( \frac{1}{2} +i\right)=\frac{11\pi^{2}}{96}.\label{eq:S16i}
\end{align}
\end{enumerate}\end{proposition}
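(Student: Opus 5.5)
The plan is to treat every part the same way. We use only three tools: the reflection formula \eqref{eq:EichlerE4refl}, the four-term sum rule \eqref{eq:sumEichlerE4}, and the translation invariance $\mathscr E_4(z+1)=\mathscr E_4(z)$. With these we build a small linear system linking the values of $\mathscr E_4$ at the finitely many CM points that occur. We solve it for the one combination that is needed. Finally we substitute into \eqref{eq:Sr(z)}, where the $\zeta(3)$ contributions should cancel for the specific $r$, leaving a rational multiple of $\pi^2$.

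For (a), the key observation is that $z_0=\frac{1+\sqrt3 i}{2}$ satisfies $z_0^2=z_0-1$. Hence $-1/z_0=z_0-1$, and so $\mathscr E_4(-1/z_0)=\mathscr E_4(z_0)$. Then \eqref{eq:EichlerE4refl} reads $(1-z_0^2)\mathscr E_4(z_0)=-\frac{z_0^4-5z_0^2+1}{3z_0}-\frac{30\zeta(3)(z_0^2-1)}{\pi^3 i}$. Using $z_0^3=-1$, the right side simplifies to $\frac{2i}{\sqrt3}(1-z_0^2)+\frac{30\zeta(3)}{\pi^3i}(1-z_0^2)$, which gives \eqref{eq:EichlerE4isqrt3}.

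For \eqref{eq:S16isqrt3} we take $z=\frac{\sqrt3 i}{2}$, so that $z+\frac12=z_0$ and $2z=\sqrt3 i$. We still need $\mathscr E_4(\sqrt3 i)$. We plan to get it as follows.
\begin{itemize}
\item Apply \eqref{eq:EichlerE4refl} at $z=\sqrt3 i$. This ties $\mathscr E_4(\sqrt3 i)$ to $\mathscr E_4(i/\sqrt3)$.
\item Apply \eqref{eq:sumEichlerE4} at $w=\frac{\sqrt3 i}{2}$ and at $w=\frac{i}{2\sqrt3}$. The points involved are $\frac{\sqrt3 i}{2}$, $\frac{1+\sqrt3 i}{2}$, $\sqrt3 i$, $2\sqrt3 i$, $\frac{i}{2\sqrt3}$, $\frac{i}{\sqrt3}$, and so on.
\item Close the system with reflections such as $\frac{\sqrt3 i}{2}\leftrightarrow\frac{2i}{\sqrt3}$ and $2\sqrt3 i\leftrightarrow \frac{i}{2\sqrt3}$.
\item Translate $\frac{1+i/\sqrt3}{2}$ back to $z_0$ via $-1/z$ followed by a shift.
\end{itemize}
In practice we only need the combination $8\mathscr E_4(z_0)-\mathscr E_4(\sqrt3 i)$ together with $r\,[\mathscr E_4(z_0)-2\mathscr E_4(\sqrt3 i)]$ for $r=\frac1{16}$. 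So we eliminate unknowns until precisely $\mathscr E_4(z_0)+c\,\mathscr E_4(\sqrt3 i)$ with the right $c$ remains expressible. We then check that the $\zeta(3)/(\pi\I z)$ terms (namely $\frac74+2r$, plus those coming from the Eichler values) cancel, and that the polynomial pieces combine with $\frac{2\pi^2}{3}r(\I z)^2$ into $\frac{11\pi^2}{96}$.

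Parts (b)–(d) follow the same pattern, with these specifics.
\begin{itemize}
\item In (d), $i$ is fixed by $z\mapsto-1/z$, so \eqref{eq:EichlerE4refl} at $z=i$ gives \eqref{eq:EichlerE4i} at once. For $z=\frac12+i$ we need $\mathscr E_4(1+i)=\mathscr E_4(i)$ and $\mathscr E_4(2i)$. The latter follows from reflection at $2i\leftrightarrow i/2$ and \eqref{eq:sumEichlerE4} at $w=i/2$, which involves $\frac{1+i}{2}$; that point is $-1/(1+i)$ shifted, hence again reduces to $\mathscr E_4(i)$.
\item In (c), reflection at $z=\sqrt2 i$ relates $\mathscr E_4(\sqrt2 i)$ and $\mathscr E_4(i/\sqrt2)$. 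For $z=\frac12+\frac{i}{\sqrt2}$ we use $\mathscr E_4(z+\frac12)=\mathscr E_4(\frac{i}{\sqrt2})$ and $2z\equiv\sqrt2 i \pmod 1$, so \eqref{eq:EichlerE4isqrt2} is exactly the needed combination.
\item In (b), the orbit is larger. It involves $\frac{1+\sqrt7 i}{2}$, $\sqrt7 i$, $\frac{\sqrt7 i}{2}$, $2\sqrt7 i$, and their reflections, including $\frac{1+\sqrt7 i}{4}$-type points that map back under $-1/z$ plus translations. We would apply \eqref{eq:sumEichlerE4} at $w=\frac{\sqrt7 i}{2}$ and at the reflected points.
\end{itemize}

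The main obstacle is (b), and to a lesser degree (a). The linear system there may not determine individual values, only the specific combination \eqref{eq:EichlerE4isqrt7}. We must organise the elimination carefully so that exactly $12\mathscr E_4(\frac{1+\sqrt7i}{2})-\mathscr E_4(\sqrt7 i)$ survives. Our first attempt would be to write every point as $\gamma(w)$ for $w$ in a fundamental set, list all available relations, and solve by hand. We would then verify numerically from the Lambert series \eqref{eq:Eichler4_defn} before confirming that $r=\frac1{46}$ makes \eqref{eq:Sr(z)} collapse to $\frac{43\pi^2}{552}$.
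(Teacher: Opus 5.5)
Your derivations of \eqref{eq:EichlerE4isqrt3}, \eqref{eq:EichlerE4isqrt2} and \eqref{eq:EichlerE4i} agree with the paper: reflection at $z_0$ using $-1/z_0=z_0-1$, at $i/\sqrt2$, and at the fixed point $i$. Part (c) as a whole also agrees. The trouble in the second halves of (a) and (d) is that you never checked which Eichler values $\mathscr S_{1/16}$ actually contains. In \eqref{eq:Sr(z)} the coefficient of $\mathscr E_4(2z)$ is $\frac{\pi^2 i}{\I z}\big(\frac{1}{120}-\frac{2r}{15}\big)$, and this vanishes at $r=\frac1{16}$. Hence
$\mathscr S_{1/16}(z)=\frac{\pi^2}{24}(\I z)^2+\frac{15\zeta(3)}{8\pi\I z}-\frac{\pi^2 i}{16\,\I z}\mathscr E_4\big(z+\frac12\big)$,
and only $\mathscr E_4(z_0)$, respectively $\mathscr E_4(1+i)=\mathscr E_4(i)$, is needed.

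This matters for more than economy: the auxiliary values you set out to compute are not delivered by the relations you list.
\begin{itemize}
\item Reflection at $2i$ gives $\mathscr E_4(2i)+4\mathscr E_4(\frac i2)$. The sum rule \eqref{eq:sumEichlerE4} at $w=\frac i2$, after removing $\mathscr E_4(\frac{1+i}{2})$ by reflection at $1+i$, gives the very same combination. So $\mathscr E_4(2i)$ is never isolated.
\item In your $\sqrt3$ system, eliminating $\mathscr E_4(\frac{\sqrt3 i}{2})$, $\mathscr E_4(\frac{2i}{\sqrt3})$, $\mathscr E_4(2\sqrt3 i)$ and $\mathscr E_4(\frac{i}{2\sqrt3})$ leaves information only about $\mathscr E_4(\sqrt3 i)+3\mathscr E_4(i/\sqrt3)$. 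That is just the reflection relation at $\sqrt3 i$.
\end{itemize}

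For (b), the decisive relation is missing. The paper applies \eqref{eq:sumEichlerE4} at $w=-1/z_0=\frac{-1+\sqrt7 i}{4}$, where $z_0=\frac{1+\sqrt7 i}{2}$. Because $z_0^2=z_0-2$, one has $-\frac{2}{z_0}=z_0-1$, $-\frac{4}{z_0}=2z_0-2$, and $\frac12-\frac{1}{z_0}=\frac{1}{1-z_0}=-\frac{1}{z_0-1}$. So, modulo translations, the four terms are $\mathscr E_4(-\frac{1}{z_0-1})$, $\mathscr E_4(-\frac{1}{z_0})$, $\mathscr E_4(z_0)$ and $\mathscr E_4(2z_0)$. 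Reflecting the first two via \eqref{eq:EichlerE4refl} at $z_0-1$ and at $z_0$ yields
$\big[\frac{4}{(1-z_0)^2}+\frac{4}{z_0^2}-9\big]\mathscr E_4(z_0)+\mathscr E_4(\sqrt7 i)$ equal to an explicit quantity. The bracket equals $-12$, and this gives \eqref{eq:EichlerE4isqrt7}.

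The relation you actually name, \eqref{eq:sumEichlerE4} at $w=\frac{\sqrt7 i}{2}$, cannot do this job. Combine it with the sum rule at $\frac{i}{2\sqrt7}$ and with the reflections $\frac{\sqrt7 i}{2}\leftrightarrow\frac{2i}{\sqrt7}$, $2\sqrt7 i\leftrightarrow\frac{i}{2\sqrt7}$, and $\frac12+\frac{i}{2\sqrt7}\to\frac{1+\sqrt7 i}{4}\to z_0$. Every elimination of the auxiliary values then leaves only $\mathscr E_4(\sqrt7 i)+7\mathscr E_4(i/\sqrt7)$, because the $\mathscr E_4(z_0)$ contributions cancel identically.

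So your (b) is a search plan whose stated entry point leads nowhere. Once the relation at $-1/z_0$ is supplied, your final step goes through. For $r=\frac1{46}$ the Eichler part of \eqref{eq:Sr(z)} is $-\frac{\pi^2 i}{184\,\I z}\big[12\mathscr E_4(z+\frac12)-\mathscr E_4(2z)\big]$, and at $z=\frac{\sqrt7 i}{2}$ the $\zeta(3)$ terms cancel, leaving $\frac{43\pi^2}{552}$.
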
\begin{proof}\begin{enumerate}[leftmargin=*,  label=(\alph*),ref=(\alph*),
widest=d, align=left] \item
To verify \eqref{eq:EichlerE4isqrt3}, plug $ z=\frac{1+\sqrt{3}i}{2}$ into \eqref{eq:EichlerE4refl}.

For any $ z\in\mathfrak H$, we have \begin{align}
\mathscr S_{1/16}(z)=\frac{\pi^{2}}{24}(\I z)^{2}+\frac{15\zeta(3)}{8\pi\I z}-\frac{\pi^2i\mathscr  E_{4} \big(z+\frac12\big)}{16}.\label{eq:S16}
\end{align}Specializing the equation above  to $ z=\frac{\sqrt{3}i}{2}$, we get \eqref{eq:S16isqrt3}.
\item For  $ z=\frac{1+\sqrt{7}i}2$, we have\begin{align}
\frac{1}{2}-\frac{1}{z}=\frac{1}{1-z},\quad -\frac{2}{z}=z-1,\quad -\frac{4}{z}=2z-2,
\end{align}   so \eqref{eq:sumEichlerE4} brings us \begin{align}
0={}&4\mathscr E_4\left( \frac{1}{1-z} \right)+4\mathscr E_4\left( -\frac{1}{z} \right)-9\mathscr E_{4}(z)+\mathscr E_4(2z).
\end{align}With the aid of  \eqref{eq:EichlerE4refl}, we may invert the arguments of the first two  Eichler integrals on the right-hand side of the equation above, thereby confirming  \eqref{eq:EichlerE4isqrt7}.
Here, the coefficient $12$ comes from the algebra
\begin{align}
\frac{4}{(1-z)^{2}}+\frac{4}{z^2}-9=-12
\end{align}for  $ z=\frac{1+\sqrt{7}i}2$.

We have\begin{align}
\mathscr S_{1/46}(z)=\frac{\pi^{2}}{69}(\I z)^{2}+\frac{165\zeta(3)}{92\pi\I z}-\frac{\pi^2i\left[12\mathscr  E_{4} \big(z+\frac12\big)-\mathscr E_4(2z)\right]}{184}
\end{align} for any  $ z\in\mathfrak H$. Specializing the equation above  to $ z=\frac{\sqrt{7}i}{2}$, we get \eqref{eq:S46isqrt7}.\item To verify \eqref{eq:EichlerE4isqrt2}, plug $ z=\frac{i}{\sqrt{2}}$ into \eqref{eq:EichlerE4refl}.

We have\begin{align}
\mathscr S_{1/4}(z)=\frac{\pi^{2}}{6}(\I z)^{2}+\frac{9\zeta(3)}{4\pi\I z}-\frac{\pi^2i\left[2\mathscr  E_{4} \big(z+\frac12\big)+\mathscr E_4(2z)\right]}{40}
\end{align} for any  $ z\in\mathfrak H$. Specializing the last displayed equation   to $ z=\frac{1}{2} +\frac{i}{\sqrt{2}}$, we get \eqref{eq:S4isqrt2}.
\item To verify \eqref{eq:EichlerE4i}, plug $ z=i$ into \eqref{eq:EichlerE4refl}.

Specializing \eqref{eq:S16} to $ z=\frac{1}{2}+i$, we get \eqref{eq:S16i}.
\qedhere\end{enumerate}\end{proof}
\begin{proposition}\label{prop:E4''sum}\begin{enumerate}[leftmargin=*,  label=\emph{(\alph*)},ref=(\alph*),
widest=d, align=left] \item We have \begin{align}
\mathscr E_4''\left( \frac{1+\sqrt{3}i}{2} \right)=-\frac{15\sqrt{3}L_{-3}(2)}{\pi^{2}i}-\sqrt{3}i,\label{eq:EichlerE4''isqrt3}
\end{align} which entails\begin{align}
\mathscr T_{1/16}\left( \frac{\sqrt{3}i}{2} \right)=\frac{\pi}{36}.\label{eq:T16isqrt3}
\end{align}
\item We have \begin{align}
3\mathscr E_4''\left( \frac{1+\sqrt{7}i}{2} \right)-\mathscr E_4''\left( \sqrt{7} i\right)=-\frac{35\sqrt{7}L_{-7}(2)}{4\pi^2i}-\sqrt{7}i,\label{eq:EichlerE4''isqrt7}
\end{align}which entails\begin{align}
\mathscr T_{1/46}\left( \frac{\sqrt{7}i}{2} \right)=\frac{\pi}{966}.\label{eq:T46isqrt7}
\end{align}
\item We have \begin{align}
\mathscr E_4''\left( \frac{i}{\sqrt{2}} \right)+2\mathscr E_4''\left( \sqrt{2} i\right)=-\frac{40\sqrt{2}L_{-8}(2)}{\pi^2i}-5\sqrt{2}i,\label{eq:EichlerE4''isqrt2}
\end{align}which entails\begin{align}
\mathscr T_{1/4}\left( \frac{1}{2} +\frac{i}{\sqrt{2}}\right)=-\frac{\pi}{12}.\label{eq:T4isqrt2}
\end{align}
\item We have \begin{align}
\mathscr E_4''(i)=-\frac{20L_{-4}(2)}{\pi^2i}-2i,\label{eq:EichlerE4''i}
\end{align}which entails\begin{align}
\mathscr T_{1/16}\left( \frac{1}{2} +i\right)=-\frac{\pi}{96}.\label{eq:T16i}
\end{align}
\end{enumerate}
\end{proposition}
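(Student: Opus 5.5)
The plan is to treat each part in two stages. First evaluate the combination of $\mathscr E_4''$ values. Then substitute it into $\mathscr T_{r}(z)=\mathscr R_1(z)-r\mathscr R_2(z)$, using \eqref{eq:R1(z)} and \eqref{eq:R2(z)}.

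For the $\mathscr T$ stage, note that $\mathscr R_1-r\mathscr R_2$ splits into two pieces. The first is $[\mathscr Q_1(z)-r\mathscr Q_2(z)]/[\pi(\I z)^2]$, which is read off from the penultimate column of Table \ref{tab:H2} (already justified by Proposition \ref{prop:E4sum} and the Glasser--Zucker values). The second is a linear combination of $\mathscr E_4''(z+\frac12)$ and $\mathscr E_4''(2z)$ with coefficients depending on $r$. The whole content is therefore to check that the $L_d(2)$ contributions from the two pieces cancel and leave a rational multiple of $\pi$. That cancellation is a routine check once the $\mathscr E_4''$ values are in hand. Note that $\mathscr E_4''$ is $1$-periodic, so $\mathscr E_4''(z+\frac12)$ at $z=\frac{\sqrt3 i}{2}$ is exactly $\mathscr E_4''(\frac{1+\sqrt3 i}{2})$, and similarly in the other cases.

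To evaluate $\mathscr E_4''$ at the special points, I would differentiate the reflection formula \eqref{eq:EichlerE4refl} twice in $z$. This expresses $\mathscr E_4''(z)$ minus its transform at $-1/z$ in terms of $\mathscr E_4(-1/z)$, $\mathscr E_4'(-1/z)$ and an explicit rational function of $z$ plus a $\zeta(3)$ term.

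\begin{itemize}
\item At $z=i$, $z=\frac{i}{\sqrt2}$ (paired with $\sqrt2 i$), and $z=\frac{1+\sqrt3 i}{2}$ (where $-1/z=z-1$), this reduces the target to already-known $\mathscr E_4$ values from Proposition \ref{prop:E4sum} and to first derivatives $\mathscr E_4'$.
\item For $z=\frac{1+\sqrt7 i}{2}$, I would instead differentiate the sum rule \eqref{eq:sumEichlerE4} twice, and use the same substitutions $\frac12-\frac1z=\frac1{1-z}$, $-\frac2z=z-1$, $-\frac4z=2z-2$ as in the proof of Proposition \ref{prop:E4sum}(b). Differentiating \eqref{eq:EichlerE4refl} then inverts the arguments, and the coefficient algebra is what should produce the combination $3\mathscr E_4''(\frac{1+\sqrt7 i}{2})-\mathscr E_4''(\sqrt7 i)$.
\end{itemize}

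The first derivatives $\mathscr E_4'$ are where the $L$-values enter. Expanding $(w-z)(w-\overline z)=(w-z)^2+2i\I z\,(w-z)$ in \eqref{eq:E(z,2)LR'} gives
\[
E(z,2)=(\I z)^2+\frac{45\zeta(3)}{\pi^3\I z}-\frac{3}{2\I z}\RE\big\{i\mathscr E_4(z)-i\I z\,\mathscr E_4'(z)\big\}.
\]
This ties $\RE\{i\,\I z\,\mathscr E_4'(z)\}$ to $E(z,2)$, whose values at the relevant points are the Glasser--Zucker entries of Table \ref{tab:H2}, involving $L_{-3}(2)$, $L_{-7}(2)$, $L_{-8}(2)$ and $L_{-4}(2)=G$. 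The remaining real or imaginary parts should be pinned down by symmetry. At these CM points the reflection $z\mapsto-\overline z$ (or $z\mapsto1-\overline z$) together with the involution fixing the point forces $\mathscr E_4'$, and likewise $\mathscr E_4''$, to lie on a fixed line in $\mathbb C$. For example, $\mathscr E_4''(i)$ must be purely imaginary, which matches the stated form.

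I expect this bookkeeping to be the main obstacle, especially for the $\sqrt7$ case. There one must make sure that enough relations are produced (from the twice-differentiated sum rule and the once-differentiated reflection) to determine $3\mathscr E_4''(\frac{1+\sqrt7 i}{2})-\mathscr E_4''(\sqrt7 i)$ without needing the individual unknown values. If the linear system is underdetermined, my fallback is to differentiate \eqref{eq:E(z,2)add} as well, and to use it together with \eqref{eq:sumE4} to supply the missing relation among $\mathscr E_4'$ at $z+\frac12$, $z$, $2z$, $4z$.
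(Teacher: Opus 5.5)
Your skeleton is the paper's. You twice-differentiate \eqref{eq:EichlerE4refl} at $i$, $\frac{i}{\sqrt2}$ and $\frac{\pm1+\sqrt3 i}{2}$. For $\sqrt7$, you twice-differentiate \eqref{eq:sumEichlerE4} at $\frac{-1+\sqrt7 i}{4}$ and invert the arguments $\frac{\pm1+\sqrt7 i}{4}=-\frac{2}{\mp1+\sqrt7 i}$ with \eqref{eq:EichlerE4''refl}. You bring in $L$-values through \eqref{eq:E(z,2)LR'}, and you finish the $\mathscr T$ stage with \eqref{eq:R1(z)}--\eqref{eq:R2(z)} and Table~\ref{tab:H2}; that last stage is handled correctly.

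However, the one formula that carries the arithmetic is wrong: you dropped the overall factor $i$ from the second term. By \eqref{eq:EichlerE4'}, $\int_z^{i\infty}[1-E_4(w)](w-z)\,\D w=-\tfrac12\mathscr E_4'(z)$, so your own expansion gives
\[i\int_z^{i\infty}[1-E_4(w)](w-z)(w-\overline z)\,\D w=i\mathscr E_4(z)+\I z\,\mathscr E_4'(z),\]
not $i\mathscr E_4(z)-i\,\I z\,\mathscr E_4'(z)$.

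This is not cosmetic. At every point you need, $2\RE z\in\mathbb Z$. The relation $\mathscr E_4^{(k)}(-\overline z)=(-1)^{k+1}\overline{\mathscr E_4^{(k)}(z)}$, immediate from \eqref{eq:Eichler4_defn} together with $1$-periodicity, then makes $\mathscr E_4'$ real there, as you observe; no involution fixing the point is needed. Hence $\RE\{i\,\I z\,\mathscr E_4'(z)\}=0$, so your version of the identity contains no $\mathscr E_4'$ at all. The step ``the $L$-values enter through $\mathscr E_4'$'' therefore fails as written, and your identity is in fact false: combined with \eqref{eq:EichlerE4i} it would force $E(i,2)=\frac{11}{4}$, whereas $E(i,2)=\frac{30G}{\pi^2}$.

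With the corrected identity, your bookkeeping worries disappear. Put $u=-1/z$ in \eqref{eq:EichlerE4''refl}. The lower-order terms there are exactly
\[-2\mathscr E_4(u)+2u\,\mathscr E_4'(u)=2\RE u\,\mathscr E_4'(u)+2i\big[i\mathscr E_4(u)+\I u\,\mathscr E_4'(u)\big].\]
At your points the first summand is real. The bracket is also real, and equals $\frac{2\I u}{3}\big[(\I u)^2+\frac{45\zeta(3)}{\pi^3\I u}-E(u,2)\big]$.

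So take imaginary parts, which is legitimate because all $\mathscr E_4''$ values involved are purely imaginary. For $\sqrt7$ you can instead simply add the two instances at $z=-\frac{2}{\pm1+\sqrt7 i}$. This cancels the $\pm\mathscr E_4'(w)$ terms and leaves $-4\mathscr E_4(w)+2\sqrt7 i\,\mathscr E_4'(w)=4i\big[i\mathscr E_4(w)+\I w\,\mathscr E_4'(w)\big]$, with $w=\frac{1+\sqrt7 i}{2}$. Either way, each target combination of $\mathscr E_4''$ is expressed directly through $E(u,2)$ and the Glasser--Zucker values.

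Consequently:
\begin{itemize}
\item No individual values of $\mathscr E_4$ or $\mathscr E_4'$ are needed. Proposition~\ref{prop:E4sum} supplies only combinations in cases (b) and (c) anyway.
\item The $\sqrt7$ system is not underdetermined.
\item The fallback through \eqref{eq:E(z,2)add} is unnecessary.
\end{itemize}
This is exactly how the paper's proof runs.
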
\begin{proof}\begin{enumerate}[leftmargin=*,  label=(\alph*),ref=(\alph*),
widest=d, align=left] \item The second-order derivative of  \eqref{eq:EichlerE4refl} reads\begin{align}
\mathscr E_4'' (z)-\frac{1}{z^{2}}\mathscr E_4'' \left( -\frac{1}{z} \right)-2\mathscr E_4 ^{}\left(- \frac{1}{z} \right)-\frac{2}{z}\mathscr E_4 '\left(- \frac{1}{z} \right)=-\frac{2}{3z^{3}}-2z-\frac{60\zeta(3)}{\pi^{3}i}\label{eq:EichlerE4''refl}
\end{align} for any $ z\in\mathfrak H$, where the integral representation of  $\mathscr E_4' $ (resp.\ $ \mathscr E''_4$) is given by\begin{align}
\mathscr E'_4(z)\colonequals \frac{\partial\mathscr
E_4(z)}{\partial z}=2\int_{z}^{i\infty} [1-E_4(w)](z-w)\D w\label{eq:EichlerE4'}
\end{align}[resp.\
 \eqref{eq:EichlerE4''}]. Setting $z=\frac{-1+\sqrt{3}i}{2} $ in the functional equation \eqref{eq:EichlerE4''refl}, we get \begin{align}\begin{split}&
\mathscr E_4''\left( \frac{-1+\sqrt{3}i}{2} \right)+\frac{1-\sqrt{3}i}{2}\mathscr E_4''\left( \frac{1+\sqrt{3}i}{2} \right)-2\mathscr E_4 ^{}\left(\frac{1+\sqrt{3}i}{2} \right)+\big(1+\sqrt{3}i\big)\mathscr E_4 '\left(\frac{1+\sqrt{3}i}{2} \right)\\={}&\frac{1}{3}-\sqrt{3}i-\frac{60\zeta(3)}{\pi^3i},
\end{split}\end{align}whose imaginary part leaves us [cf.\ \eqref{eq:E(z,2)LR'}]\begin{align}
\begin{split}\frac{3}{2}\mathscr E_4''\left( \frac{1+\sqrt{3}i}{2} \right)+\frac{\sqrt{3}i}{2}-\frac{60\zeta(3)}{\pi^3i}-\frac{2i}{\sqrt{3}}E\left( \frac{1+\sqrt{3}i}{2},2 \right)=-\sqrt{3}i-\frac{60\zeta(3)}{\pi^3i}.
\end{split}
\end{align}This is equivalent to \eqref{eq:EichlerE4''isqrt3}, thanks to the closed-form evaluation $E\left( \frac{1+\sqrt{3}i}{2},2 \right)=\frac{135L_{-3}(2)}{4\pi^{2}} $ (cf.\ Table \ref{tab:H2}).

Specializing\begin{align}
\mathscr T_{1/16}\left( z \right)=\frac{\mathscr Q_1(z)-\frac{1}{16}\mathscr Q_2(z)}{\pi(\I z)^2}-\frac{\pi i}{16\I z}\mathscr E''_4\left( z+\frac{1}{2} \right)\label{eq:T16}
\end{align}to $ z=\frac{\sqrt{3}i}{2}$, while recalling the proven evaluation $ \mathscr  Q_1\left( \frac{1+\sqrt{3}i}{2} \right)-\frac{1}{16} \mathscr  Q_2\left( \frac{1+\sqrt{3}i}{2} \right)=\frac{11\pi^{2}}{96}-\frac{45L_{-3}(2)}{32}$ from the penultimate column of Table \ref{tab:H2}, we reach \eqref{eq:T16isqrt3}.\item Twice differentiating   \eqref{eq:sumEichlerE4} in $z$, we may establish \begin{align}0=\mathscr E_4''\left( z+\frac{1}{2} \right)+\mathscr E_4''(z)-9\mathscr E_{4}''(2z)+4\mathscr E_4''(4z)\end{align}for any $ z\in\mathfrak H$. When  $ z=-\frac{2}{1+\sqrt{7}i}$, this boils down to \begin{align}
\begin{split}0={}&\mathscr E_4''\left( \frac{1+\sqrt{7 }i}{4} \right)+\mathscr E_4''\left( \frac{-1+\sqrt{7 }i}{4} \right)-9\mathscr E_{4}''\left( \frac{-1+\sqrt{7}i}{2} \right)+4\mathscr E_4''\left( -1+\sqrt{7} i\right)\\={}&\mathscr E_4''\left( -\frac{2}{-1+\sqrt{7}i} \right)+\mathscr E_4''\left( -\frac{2}{1+\sqrt{7}i} \right)-9\mathscr E_{4}''\left( \frac{1+\sqrt{7}i}{2} \right)+4\mathscr E_4''\left( \sqrt{7} i\right).
\end{split}
\end{align}Applying \eqref{eq:EichlerE4''refl}   to   $ z=-\frac{2}{\pm 1+i\sqrt{7}}$, we receive\begin{align}
\begin{split}&\mathscr E_4''\left( -\frac{2}{\pm1+\sqrt{7}i} \right)+\frac{3\mp\sqrt{7}i}{2}\mathscr E_4''\left( \frac{1+\sqrt{7}i}{2} \right)\\{}&-2\mathscr E_4^{}\left( \frac{1+\sqrt{7}i}{2} \right)+\big(\!\pm1+\sqrt{7}i\big)\mathscr E_4'\left( \frac{1+\sqrt{7}i}{2} \right)\\={}&\mp\frac{7}{6}-\frac{5\sqrt{7}i}{6}-\frac{60\zeta(3)}{\pi^{3}i}
\end{split}
\end{align} in return. Extracting the  imaginary part, we get \begin{align}\begin{split}&
\mathscr E_4''\left( -\frac{2}{-1+\sqrt{7}i} \right)+\mathscr E_4''\left( -\frac{2}{1+\sqrt{7}i} \right)+3\mathscr E_4''\left( \frac{1+\sqrt{7}i}{2} \right)\\&{}-4\mathscr E_4^{}\left( \frac{1+\sqrt{7}i}{2} \right)+2\sqrt{7}i\mathscr E_4'\left( \frac{1+\sqrt{7}i}{2} \right)\\={}&-\frac{5\sqrt{7}i}{3}-\frac{120\zeta(3)}{\pi^{3}i}.
\end{split}
\end{align} Now that \begin{align}
-4\mathscr E_4^{}\left( \frac{1+\sqrt{7}i}{2} \right)+2\sqrt{7}i\mathscr E_4'\left( \frac{1+\sqrt{7}i}{2} \right)=\frac{7\sqrt{7}i}{3}-\frac{120\zeta(3)}{\pi^{3}i}-\frac{4\sqrt{7}i}{3}E\left( \frac{1+\sqrt{7}i}{2},2 \right)
\end{align} issues from  \eqref{eq:E(z,2)LR'}, we arrive at  \eqref{eq:EichlerE4''isqrt7} after recalling $ E\left( \frac{1+\sqrt{7}i}{2},2 \right)=\frac{105L_{-7}(2)}{4\pi^2}$ (cf.\ Table \ref{tab:H2}).

To show \eqref{eq:T46isqrt7}, simply study the $ z=\frac{\sqrt{7}i}{2}$ case of\begin{align}
\mathscr T_{1/46}(z)=\frac{\mathscr Q_1(z)-\frac{1}{46}\mathscr Q_2(z)}{\pi(\I z)^2}-\frac{\pi i}{46\I z}\left[3\mathscr E''_4\left( z+\frac{1}{2} \right)-\mathscr E''_4(2z)\right]
\end{align}along with the proven identity  $ \mathscr  Q_1\left( \frac{1+\sqrt{7}i}{2} \right)-\frac{1}{46} \mathscr  Q_2\left( \frac{1+\sqrt{7}i}{2} \right)=\frac{43\pi^{2}}{552}-\frac{245L_{-7}(2)}{368}$  (cf.\ Table \ref{tab:H2}).

\item With  $ z=\frac{i}{\sqrt{2}}$ in \eqref{eq:EichlerE4''refl}, we are left with\begin{align}
\mathscr E_4''\left( \frac{i}{\sqrt{2}} \right)+2\mathscr E_4''\left( \sqrt{2} i\right)-2\mathscr E_4^{}\left( \sqrt{2} i\right)+2\sqrt{2}i\mathscr E_4'\left( \sqrt{2} i\right)=-\frac{7\sqrt{2}i}{3}-\frac{60\zeta(3)}{\pi^{3}i},
\end{align}while \begin{align}
-2\mathscr E_4^{}\left( \sqrt{2} i\right)+2\sqrt{2}i\mathscr E_4'\left( \sqrt{2} i\right)=\frac{8\sqrt{2}i}{3}-\frac{60\zeta(3)}{\pi^{3}i}+\frac{4\sqrt{2}E\big(\sqrt{2}i,2\big)}{3i}
\end{align}is a result of \eqref{eq:E(z,2)LR'}. Therefore,  we may verify \eqref{eq:EichlerE4''isqrt2} through   $ E\big(\sqrt{2}i,2\big)=E\big(1+\sqrt{2}i,2\big)=\frac{30L_{-8}(2)}{\pi^2}$ (cf.\ Table \ref{tab:H2}).

The formula\begin{align}\mathscr T_{1/4}(z)=
\frac{\mathscr Q_1(z)-\frac{1}{4}\mathscr Q_2(z)}{\pi(\I z)^2}-\frac{\pi i}{20\I z}\left[\mathscr E''_4\left( z+\frac{1}{2} \right)+2\mathscr E''_4(2z)\right]
\end{align} becomes   \eqref{eq:T4isqrt2} when $ z=\frac{1}{2}+\frac{i}{\sqrt{2}}$, thanks to the proven evaluation  $ \mathscr  Q_1\left( \frac{1}{2}+\frac{i}{\sqrt{2}} \right)-\frac{1}{4} \mathscr  Q_2\left( \frac{1}{2}+\frac{i}{\sqrt{2}} \right)=\frac{5\pi^{2}}{24}-2L_{-8}(2)$ (cf.\ Table \ref{tab:H2}).
\item Setting $ z=i$ in \eqref{eq:EichlerE4''refl}, we find\begin{align}
2\mathscr E''_4(i)-2\mathscr E_4^{}(i)+2i\mathscr E_4'(i)=-\frac{8i}{3}-\frac{60\zeta(3)}{\pi^{3}i},
\end{align}while \begin{align}
-2\mathscr E_4^{}(i)+2i\mathscr E_4'(i)=\frac{4i}{3}-\frac{60\zeta(3)}{\pi^{3}i}+\frac{4E(i,2)}{3i}
\end{align}follows from \eqref{eq:E(z,2)LR'}. Therefore,  we may confirm \eqref{eq:EichlerE4''i} right after invoking  $ E(i,2)=\frac{30L_{-4}(2)}{\pi^2}$ from  Table \ref{tab:H2}.

Specializing  \eqref{eq:T16} to $ z=\frac{1}{2}+i$, while looking up  the entry $ \mathscr  Q_1\left( \frac{1}{2}+i \right)-\frac{1}{16} \mathscr  Q_2\left( \frac{1}{2}+i \right)=\frac{11\pi^{2}}{96} -\frac{5L_{-4}(2)}{4}$ in the penultimate column of Table \ref{tab:H2}, we reach \eqref{eq:T16i}. \qedhere\end{enumerate}
\end{proof}

\begin{remark}It might be a little counterintuitive that the generalizations of  \eqref{eq:-64H2}--\eqref{eq:4096H2} [see Theorem \ref{thm:H2}(b)] are more sophisticated than those of  \eqref{eq:Sun1}--\eqref{eq:Sun4} [see Theorem \ref{thm:H2}(a)]. As shown in the proof above, the ``magic cancellations'' of all the special Dirichlet  $L$-values in the final forms of \eqref{eq:-64H2}--\eqref{eq:4096H2} may have obscured the actual complexity of Theorem \ref{thm:H2}(b).
\eor\end{remark}
\section{Infinite series involving $ \binom{2k}k^3$, $ \mathsf H_k^{(3)}$, and $ \mathsf H_{2k}^{(3)}$\label{sec:H3}}In \S\ref{subsec:3deriv}, we upgrade the procedures of \S\ref{subsec:pFqODE} to third-order partial derivatives with respect to the  hypergeometric parameters, as well  the  integral representations that solve the corresponding inhomogeneous differential equations.
In \S\ref{subsec:Thm1.2}, we prove the main statements of Theorem  \ref{thm:H3}, by introducing modular parametrizations to the integral representations constructed in \S\ref{subsec:3deriv}. In \S\ref{subsec:specEichlerE6}, we study the special cases listed in Table \ref{tab:H3}.
  \subsection{Third-order hypergeometric deformations and variation of parameters\label{subsec:3deriv}}We open this subsection by extending the first half of Lemma \ref{lm:var_para}.   \begin{lemma}\label{lm:H3a}For   $ |4t(1-t)|\leq1$ and $ \RE t\leq\frac12$, we have  {\allowdisplaybreaks
\begin{align}
\begin{split}{}&\sum_{k=0}^\infty\binom{2k}{k}^3\left[ \mathsf H^{(3)}_{2k}-\frac{1}{8}\mathsf H^{(3)}_k \right]\left[ \frac{t(1-t)}{2^4} \right]^{k}=-\frac{1}{96}\left.\!\frac{\partial^3}{\partial\varepsilon^3}{_3F_2}\left(\left. \begin{array}{@{}c@{}}
 \frac{1}{2}+\varepsilon,\frac{1}{2}+\varepsilon,\frac{1}{2}-2\varepsilon \\
1,1 \\
\end{array} \right| 4t(1-t)\right)\right|_{\varepsilon=0}\\={}&\left(\frac{2}{\pi}\right)^4\int_0^t(1-2s)\big[\mathbf K\big(\sqrt{s}\big)\big]^2\big[\mathbf K\big(\sqrt{\mathstrut 1-s}\big)\mathbf K\big(\sqrt{\mathstrut t}\big)-\mathbf K\big(\sqrt{\mathstrut s}\big)\mathbf K\big(\sqrt{\mathstrut 1-t}\big)\big]^{2}\D s.\end{split}\label{eq:H3int1}
\end{align}
}
\end{lemma}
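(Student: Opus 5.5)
Two equalities are at stake in \eqref{eq:H3int1}, and I would treat them separately. The first is a termwise computation. Write the $k$-th coefficient of the $_3F_2$ as
\[
\frac{(\frac12+\varepsilon)_k^2(\frac12-2\varepsilon)_k}{(k!)^3}.
\]
Its logarithmic derivative in $\varepsilon$ is
\[
\sum_{j<k}\Big[\frac{2}{j+\frac12+\varepsilon}-\frac{2}{j+\frac12-2\varepsilon}\Big].
\]
The first-order term cancels at $\varepsilon=0$. The second-order term is proportional to $\sum_{j<k}(\frac12+j)^{-2}(1-4)\cdot$const. With $S_r\colonequals\sum_{j<k}(j+\frac12)^{-r}$, the third derivative of the coefficient at $\varepsilon=0$ equals the $\binom{2k}{k}^3/4^{k}\cdot4^{k}$-normalised coefficient times $(2\cdot2!-(-2)^3\cdot 2!)S_3/\dots$. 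The first and second derivatives of the log-coefficient vanish and equal $-6S_2$ respectively, so no cross terms survive: $F'''/F=\ell'''+3\ell'\ell''+\ell'^3=\ell'''$. Here
\[
\ell'''=2\cdot2S_3+2\cdot(-2)^3\cdot(-1)\cdot\dots .
\]
I would compute this carefully. It gives $\ell'''=-96\,S_3/8$ up to sign, and
\[
S_3=\sum_{j<k}\frac{8}{(2j+1)^3}=8\Big[\mathsf H^{(3)}_{2k}-\tfrac18\mathsf H^{(3)}_k\Big],
\]
which matches the prefactor $-\tfrac1{96}$.

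For the second equality I would mimic Lemma~\ref{lm:var_para}, but with a different Clausen-type factorization. The natural one is the classical
\[
{_3F_2}\Big(\begin{smallmatrix}2a,2b,a+b\\ 2a+2b,\,a+b+\frac12\end{smallmatrix}\Big|\,\cdot\Big)
\]
identity, or better Orr/Clausen in the form
\[
{_2F_1}\Big(\begin{smallmatrix}a,b\\ a+b+\frac12\end{smallmatrix}\Big|x\Big)^2 .
\]
That does not directly fit parameters $(\frac12+\varepsilon,\frac12+\varepsilon,\frac12-2\varepsilon;1,1)$. Instead I would use the quadratic relation ${}_3F_2(\dots|4t(1-t))$ versus products of Legendre-type functions, as in \eqref{eq:Clausen'}. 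I would first look for a one-parameter family $f_\varepsilon(t)$ solving a second-order ODE, with the $_3F_2$ equal to a symmetric product $f_\varepsilon g_\varepsilon$ and with $f_0=g_0=\frac2\pi\mathbf K(\sqrt t)$.

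If no such exact factorization is available, I would switch to a differential-equation approach. The $_3F_2$ satisfies a third-order operator $\theta^3-x(\theta+\frac12+\varepsilon)^2(\theta+\frac12-2\varepsilon)$ with $x=4t(1-t)$. Differentiating three times in $\varepsilon$ at $0$, the first- and second-order deformations satisfy inhomogeneous equations whose forcing terms I would compute explicitly. The operator at $\varepsilon=0$ is the symmetric square of the Legendre operator $\widehat{\mathscr L}_{-1/2}$. Its solution space is spanned by $\mathbf K(\sqrt t)^2$, $\mathbf K(\sqrt t)\mathbf K(\sqrt{1-t})$ and $\mathbf K(\sqrt{1-t})^2$.

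Variation of parameters for a symmetric square then yields a Green's kernel proportional to
\[
[\mathbf K(\sqrt{1-s})\mathbf K(\sqrt t)-\mathbf K(\sqrt s)\mathbf K(\sqrt{1-t})]^2
\]
with Wronskian factor $\pi/(4s(1-s))$. This accounts for the squared bracket in \eqref{eq:H3int1}. The source term must then be $\propto(1-2s)\mathbf K(\sqrt s)^2/[s(1-s)]\cdot s(1-s)$. Such a source arises because the first two $\varepsilon$-derivatives vanish in the right combination, so only $\partial_\varepsilon^3$ of the operator acting on $F_0$ survives. That operator is $-x(\dots)$ cubic in $\varepsilon$, giving $\propto x\,F_0$ with a $\theta$-free coefficient. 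Converting $x\,d/dx$ to $t$-derivatives produces the factor $(1-2t)$.

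The initial conditions at $t=0$ (value and first two derivatives of order $O(t)$ from the series) fix the lower limit at $0$ and exclude homogeneous terms. I would finish by checking that the kernel and its first two $t$-derivatives vanish at $s=t$, and by matching the leading coefficient at $t\to0$, which is $\binom21^3\cdot\frac{7}{8}\cdot\frac{t}{16}$. I expect the main obstacle to be verifying that the lower-order deformations genuinely drop out of the forcing, i.e.\ that $\partial_\varepsilon F|_0$ and $\partial_\varepsilon^2F|_0$ enter only through combinations killed by the operator. I would check this via the termwise computation above ($\ell'=0$) together with the operator's explicit $\varepsilon$-dependence. The other delicate point is tracking the precise constant $(2/\pi)^4$.
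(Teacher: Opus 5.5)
Your plan is correct and is essentially the paper's proof: your deformed operator $L_\varepsilon=L_0+3\varepsilon^2x(\theta+\frac12)+2\varepsilon^3x$, multiplied by $(1-2t)/[t(1-t)]$, is exactly the paper's $\widehat{\mathscr A}+12\varepsilon^2t(1-t)\frac{\partial}{\partial t}+2\varepsilon^2(3+4\varepsilon)(1-2t)$, so $\partial_\varepsilon F|_{\varepsilon=0}=0$ gives $\widehat{\mathscr A}\,\partial_\varepsilon^3F|_{\varepsilon=0}=-48(1-2t)F_0$; the paper likewise solves this by variation of parameters and discards homogeneous terms via the $t\to0^+$ asymptotics of $\mathbf K(\sqrt t)$ and $\mathbf K(\sqrt{1-t})$. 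Two slips need fixing: the kernel vanishes at $t=s$ only together with its first $t$-derivative, and its second derivative there must equal $2\cdot\frac{8}{\pi^2}\bigl[\frac{\pi}{4s(1-s)}\bigr]^2=[s(1-s)]^{-2}$ (the reciprocal of the leading coefficient of $\widehat{\mathscr A}$, and the source of $(2/\pi)^4$); also the $k=1$ term is $8\cdot1\cdot\frac{t}{16}=\frac t2$, because $\mathsf H^{(3)}_2-\frac18\mathsf H^{(3)}_1=1$, not $\frac78$.
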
 \begin{proof}The first equality in \eqref{eq:H3int1} follows immediately  from termwise differentiation of the $ _3F_2$ series.

With the Appell--Legendre differential operator\footnote{The operator $ \widehat {\mathscr A}$ was written as $ \widehat{\mathsf A}_{-1/2,t}$ in \cite[Proposition 4.8]{Zhou2025NotesBHS}, where the subscript $ -1/2$ referred to the degree of the Legendre function.}\begin{align}
\widehat{\mathscr A}f(t)\colonequals\frac{\partial }{\partial t}\left[t (1-t)\frac{\partial }{\partial t} \right]^2f(t)-\sqrt{t(1-t)}\frac{\partial}{\partial t}\left[ \sqrt{t(1-t)}f(t) \right],
\end{align} we may construct the following differential equation:\begin{align}\left[
\widehat{\mathscr A}+12\varepsilon^{2}t(1-t)\frac{\partial}{\partial t}+2\varepsilon^{2}(3+4\varepsilon)(1-2t)\right]{_3F_2}\left(\left. \begin{array}{@{}c@{}}
 \frac{1}{2}+\varepsilon,\frac{1}{2}+\varepsilon,\frac{1}{2}-2\varepsilon \\
1,1 \\
\end{array} \right| 4t(1-t)\right)=0,
\end{align}whose third-order derivative in $ \varepsilon$ steers us to an  inhomogeneous Appell--Legendre differential equation, namely\begin{align}\begin{split}\widehat{\mathscr A}
\left.\!\frac{\partial^3}{\partial\varepsilon^3}{_3F_2}\left(\left. \begin{array}{@{}c@{}}
 \frac{1}{2}+\varepsilon,\frac{1}{2}+\varepsilon,\frac{1}{2}-2\varepsilon \\
1,1 \\
\end{array} \right| 4t(1-t)\right)\right|_{\varepsilon=0}={}&-48(1-2t){_3F_2}\left(\left. \begin{array}{@{}c@{}}
 \frac{1}{2},\frac{1}{2},\frac{1}{2} \\
1,1 \\
\end{array} \right| 4t(1-t)\right)\\\xlongequal{\text{\eqref{eq:KK}}}{}&-48(1-2t)\left( \frac{2}{\pi} \right)^2\big[\mathbf K\big(\sqrt{t}\big)\big]^2.\label{eq:AL1}
\end{split}\end{align}In doing so, we have already exploited the fact that \begin{align}
\left.\!\frac{\partial}{\partial\varepsilon}{_3F_2}\left(\left. \begin{array}{@{}c@{}}
 \frac{1}{2}+\varepsilon,\frac{1}{2}+\varepsilon,\frac{1}{2}-2\varepsilon \\
1,1 \\
\end{array} \right| 4t(1-t)\right)\right|_{\varepsilon=0}=0
\end{align}for $ |4t(1-t)|\leq1$, which in turn, is a  consequence of termwise differentiation.

Solving the inhomogeneous equation  \eqref{eq:AL1} by variation of parameters
(see \cite[Proposition 4.8]{Zhou2025NotesBHS} for the treatment of a similar-looking one), we arrive at the integral representation in \eqref{eq:H3int1}, up to a trailing term that is a linear combination of $ \big[\mathbf K\big(\sqrt{t}\big)\big]^2$, $ \mathbf K\big(\sqrt{t}\big)\mathbf K\big(\sqrt{1-t}\big)$, and $ \big[\mathbf K\big(\sqrt{1-t}\big)\big]^2$. Since the left-hand side of  \eqref{eq:H3int1} has order $ O(t)$ in the $ t\to0^+$ regime, while\begin{align}
\mathbf K\big(\sqrt{t}\big)=\frac{\pi}{2}+O(t),\quad \mathbf K\big(\sqrt{1-t}\big)=\frac{1}{2}\log\frac{16}{t}+O(t\log t),
\end{align}  such a linear combination must vanish identically.\end{proof}To extend the second half of Lemma \ref{lm:var_para} to third-order harmonic numbers, we require an additional twist, as explained in the next lemma. \begin{lemma}\label{lm:H3b}For   $ |4t(1-t)|\leq1$ and $ \RE t\leq\frac12$, we have \begin{align}
\begin{split}{}&\sum_{k=0}^\infty\binom{2k}{k}^3\mathsf H^{(3)}_k \left[ \frac{t(1-t)}{2^4} \right]^{k}=\frac{1}{12}\left.\!\frac{\partial^3}{\partial\varepsilon^3}{_4F_3}\left(\left. \begin{array}{@{}c@{}}
 \frac{1}{2},\frac{1}{2},\frac{1}{2},1 \\
1+\varepsilon,1+\varepsilon,1-2\varepsilon\ \\
\end{array} \right| 4t(1-t)\right)\right|_{\varepsilon=0}\\={}&\left(\frac{2}{\pi}\right)^4\int_0^t\frac{2(1-2s)}{s(1-s)}\left\{\big[\mathbf K\big(\sqrt{s}\big)\big]^2-\left(\frac{\pi}{2}\right)^{2}\right\}\big[\mathbf K\big(\sqrt{\mathstrut 1-s}\big)\mathbf K\big(\sqrt{\mathstrut t}\big)-\mathbf K\big(\sqrt{\mathstrut s}\big)\mathbf K\big(\sqrt{\mathstrut 1-t}\big)\big]^{2}\D s.\end{split}\label{eq:H3int2}
\end{align}
\end{lemma}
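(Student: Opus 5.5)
We follow the template of Lemma \ref{lm:H3a}, in three steps.

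\emph{Step 1: the first equality.} Differentiate the $_4F_3$ series term by term. With $\binom{2k}{k}^3 (4t(1-t))^k/2^{6k}\cdot\ldots$, the summand is
\[
\frac{[(1/2)_k]^3(1)_k}{(1+\varepsilon)_k^2(1-2\varepsilon)_k\,k!}[4t(1-t)]^k .
\]
Its $\varepsilon$-dependence is $\exp\bigl[-2\log(1+\varepsilon)_k-\log(1-2\varepsilon)_k\bigr]$. Expanding,
\[
\log(1+\varepsilon)_k=\log k!+\varepsilon\mathsf H_k-\tfrac{\varepsilon^2}{2}\mathsf H^{(2)}_k+\tfrac{\varepsilon^3}{3}\mathsf H^{(3)}_k+\cdots .
\]
The linear terms cancel ($-2\varepsilon+2\varepsilon$). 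The quadratic terms give $\varepsilon^2(1+2)\mathsf H^{(2)}_k$, a coefficient $3\varepsilon^2$ inside the exponent. The cubic terms give $-\tfrac23\varepsilon^3\mathsf H^{(3)}_k+\tfrac{8}{3}\varepsilon^3\mathsf H^{(3)}_k=2\varepsilon^3\mathsf H^{(3)}_k$. Hence the third derivative at $\varepsilon=0$ is $12\mathsf H^{(3)}_k$ times the undeformed summand, and the factor $\frac1{12}$ is explained. The first derivative vanishes identically, a fact used below.

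\emph{Step 2: the inhomogeneous equation.} Find a differential operator in $t$ annihilating the deformed $_4F_3$. Its hypergeometric operator in $x=4t(1-t)$ has order $4$. Because the upper parameter $1$ matches the lower parameter $1+O(\varepsilon)$, this operator factors as $\theta\cdot(\text{third-order operator})$ plus $O(\varepsilon)$ corrections.

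Concretely, I would write the equation as
\[
\Bigl[\widehat{\mathscr A}\,t(1-t)\frac{\partial}{\partial t}\Big/(1-2t)+\varepsilon^2(\cdots)+\varepsilon^3(\cdots)\Bigr]F_\varepsilon=\text{const}.
\]
The right side is a constant (nonzero, independent of $\varepsilon$) because the leading term of $F_\varepsilon$ is $1$. Equivalently, I would find an operator $\widehat{\mathscr B}$ with $\widehat{\mathscr A}\,\widehat{\mathscr B}F_\varepsilon=O(\varepsilon^2)\cdot(\ldots)$.

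Differentiating three times in $\varepsilon$ and using $\partial_\varepsilon F|_0=0$, the $\varepsilon^2$ coefficient acts on $\partial_\varepsilon F|_0=0$ and drops out. The $\varepsilon^3$ coefficient acts on $F_0$. The result should read
\[
\widehat{\mathscr A}\,\Phi=-\frac{2\cdot 48(1-2t)}{t(1-t)}\left(\frac2\pi\right)^2\Bigl\{[\mathbf K(\sqrt t)]^2-\Bigl(\frac\pi2\Bigr)^2\Bigr\}\quad\text{(up to normalisation)},
\]
where $\Phi=\frac1{12}\partial_\varepsilon^3F|_0$.

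The subtraction of $(\pi/2)^2$ reflects the ``twist'': the constant solution $1$ of the $_4F_3$ operator. The source must be regular at $t=0$, which requires subtracting $F_0(0)$. The factor $\frac1{t(1-t)}$ comes from the extra $\theta=x\,d/dx$ factor. I would confirm the exact coefficients by comparing low-order Taylor coefficients in $t$. I expect this to be the main obstacle: pinning down the right factored operator and verifying the source term exactly.

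\emph{Step 3: variation of parameters and boundary terms.} The kernel of $\widehat{\mathscr A}$ is spanned by products of pairs from $\{\mathbf K(\sqrt t),\mathbf K(\sqrt{1-t})\}$, as in \cite[Proposition 4.8]{Zhou2025NotesBHS}. Its Green's function, with Wronskian $\pi/4$ from Legendre's relation, is the square of $\mathbf K(\sqrt{1-s})\mathbf K(\sqrt t)-\mathbf K(\sqrt s)\mathbf K(\sqrt{1-t})$ times constants, exactly as in \eqref{eq:H3int1}. Applying this Green's function to the source of Step 2 yields the integral in \eqref{eq:H3int2}.

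The integral converges at $s=0$ because $\{[\mathbf K(\sqrt s)]^2-(\pi/2)^2\}/s$ is bounded there. Any added homogeneous solution is a combination of $[\mathbf K(\sqrt t)]^2$, $\mathbf K(\sqrt t)\mathbf K(\sqrt{1-t})$ and $[\mathbf K(\sqrt{1-t})]^2$. Both the series and the integral are $O(t)$ as $t\to0^+$, while these three functions behave like $\text{const}$, $\log t$ and $\log^2t$. So, as in the proof of Lemma \ref{lm:H3a}, the combination must vanish. Analytic continuation then extends the identity to $|4t(1-t)|\le1$, $\RE t\le\frac12$.
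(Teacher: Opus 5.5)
There is a genuine gap, and it sits in Step 2, which carries the entire content of the lemma. Steps 1 and 3 are fine. In Step 3, the kernel $\frac{8}{\pi^2}[\mathbf K(\sqrt{1-s})\mathbf K(\sqrt t)-\mathbf K(\sqrt s)\mathbf K(\sqrt{1-t})]^2$ is fixed by the jump $\pi/[4t(1-t)]$ of $\partial_t$ of the bracket at $s=t$, as in Lemma \ref{lm:H3a}.

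What you wrote in Step 2 is not a derivation. The source term is read off from the target, your normalization $-2\cdot 48$ is wrong, and the stated mechanism is false.

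\emph{The factorization and the constant.} The factorization is exact, not merely up to $O(\varepsilon)$. With $\theta=x\frac{\D}{\D x}$ you have $\theta(\theta+\varepsilon)^2(\theta-2\varepsilon)=\theta(\theta^3-3\varepsilon^2\theta-2\varepsilon^3)$ and $x(\theta+\frac12)^3(\theta+1)=\theta\,x(\theta+\frac12)^3$. So the ${}_4F_3$ operator is $\theta L_\varepsilon$ with $L_\varepsilon=\theta^3-x(\theta+\frac12)^3-3\varepsilon^2\theta-2\varepsilon^3$. Note that $\theta$ sits on the left, not $\widehat{\mathscr A}\circ\theta$ as you wrote. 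Integrating once and evaluating at $x=0$ gives $L_\varepsilon F_\varepsilon=-2\varepsilon^3$.

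Hence the constant is not independent of $\varepsilon$. Nor is $1$ a solution, since $L_\varepsilon 1=-2\varepsilon^3-x/8$. Had the constant been $\varepsilon$-independent, as you assert, $\partial_\varepsilon^3$ would kill it. You would then be left with the unsubtracted source $\frac{12(1-2t)}{t(1-t)}F_0$, which is incompatible with an $O(t)$ solution. Your regularity remark detects this inconsistency but cannot produce the correct term. The $-(\pi/2)^2$ comes precisely from $\partial_\varepsilon^3(-2\varepsilon^3)=-12$.

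\emph{How to close the gap.} For $x=4t(1-t)$ one has $\theta^3-x(\theta+\frac12)^3=\frac{t(1-t)}{1-2t}\widehat{\mathscr A}$; this is the identity behind the operator in Lemma \ref{lm:H3a}. The relation above therefore reads
\[
\Bigl[\widehat{\mathscr A}-3\varepsilon^2\frac{\partial}{\partial t}-\frac{2\varepsilon^3(1-2t)}{t(1-t)}\Bigr]F_\varepsilon=-\frac{2\varepsilon^3(1-2t)}{t(1-t)}.
\]
Apply $\partial_\varepsilon^3$ at $\varepsilon=0$, using $\partial_\varepsilon F_\varepsilon|_{\varepsilon=0}=0$. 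This gives
\[
\widehat{\mathscr A}\Phi=\frac{1-2t}{t(1-t)}\Bigl(\frac2\pi\Bigr)^2\Bigl\{[\mathbf K(\sqrt t)]^2-\Bigl(\frac\pi2\Bigr)^2\Bigr\},
\]
with coefficient $+1$ rather than $-96$. The kernel $\frac{8}{\pi^2}[\cdots]^2$ then yields exactly \eqref{eq:H3int2}, and your $t\to0^+$ argument removes the homogeneous terms.

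\emph{Comparison with the paper.} Once corrected, your argument is genuinely different from the paper's, and shorter. The paper never writes an inhomogeneous equation for $F_\varepsilon$. Instead it uses a Guillera--Rogers residue identity to build $\varphi_\varepsilon=F_\varepsilon+\frac{16\varepsilon^3}{T}\,{}_4F_3(\ldots|\frac1T)$. This combination satisfies the homogeneous version of the same equation. The paper solves it by an $\int_t^\infty$ variation of parameters on $|4t(1-t)|=1$, and gets the $-(\pi/2)^2$ by subtracting the known representation \eqref{eq:4F3Rama} of the $1/T$ piece. Your route needs neither the residue calculus nor \eqref{eq:4F3Rama}. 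The paper's route instead exposes the $T\mapsto 1/T$ structure that it reuses around \eqref{eq:H3mix2}.
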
\begin{proof}The first equality in \eqref{eq:H3int2}  is an elementary exercise in termwise differentiation. The rationale behind the second equality will occupy the rest of this proof, in four separate paragraphs.

Firstly, with\begin{align}
\left\{\begin{array}{@{}r@{{}\colonequals{}}l}
f_{\nu,\varepsilon}(T) &
\sqrt{\frac{\pi}{-T}} \frac{2 \tan (\nu\pi   ) \Gamma (1-2\varepsilon ) [\Gamma (1+\varepsilon)]^2}{(1+2 \nu ) \Gamma \left(\frac{1}{2}-2\varepsilon\right) \left[\Gamma \left(\varepsilon+\frac{1}{2}\right)\right]^2}
{_3F_2}\left(\!\begin{smallmatrix}\frac{1}{2}-\varepsilon,\frac{1}{2}-\varepsilon,\frac{1}{2}+2\varepsilon  \\\frac{1}{2}-\nu ,\frac{3}{2}+\nu \end{smallmatrix}\!\middle|\frac{1}{T}\right)  \\[8pt]
g_{\nu,\varepsilon}(T) & \frac{(-T)^\nu}{\sqrt{\pi } }\frac{\Gamma \left(\frac{1}{2}+\nu \right) \Gamma (1+2 \nu)\Gamma (1-2\varepsilon )[ \Gamma (1+\varepsilon)]^2 }{\Gamma (1+\nu -2 \varepsilon ) [\Gamma (1 +\nu+\varepsilon)]^2} {_3F_2}\left(\!\begin{smallmatrix}-\nu -\varepsilon,-\nu -\varepsilon ,-\nu+2 \varepsilon  \\\frac{1}{2}-\nu ,-2 \nu \end{smallmatrix}\!\middle|\frac{1}{T}\right) \\[8pt]
h_{\nu,\varepsilon}(T) & \frac{(-T)^{-1-\nu }}{\sqrt{\pi}}\frac{\Gamma (-1-2 \nu ) \Gamma \left( -\frac{1}{2}-\nu\right)  \Gamma (1-2\varepsilon )[ \Gamma (1+\varepsilon)]^2}{\sqrt{\pi }  \Gamma (-\nu-2\varepsilon  ) [\Gamma (-\nu +\varepsilon )]^2}{_3F_2}\left(\!\begin{smallmatrix}1+\nu -\varepsilon,1+\nu -\varepsilon,1+\nu +2\varepsilon  \\\frac{3}{2}+\nu ,2+2 \nu  \end{smallmatrix}\!\middle|\frac{1}{T}\right) \\
\end{array}\right.\label{eq:fgh}
\end{align}defined for $ 2\nu\notin\mathbb Z$ and $ \varepsilon\in\big(0,\frac12\big)$, we observe that\begin{align}
\begin{split}0={}&{_4F_3}\left(\left. \begin{array}{@{}c@{}}
 -\nu,1+\nu,\frac{1}{2},1 \\
1+\varepsilon,1+\varepsilon,1-2\varepsilon\ \\
\end{array} \right| T\right)-\frac{4\varepsilon^{3}}{\nu (\nu +1)_{}T}{_4F_3}\left(\left. \begin{array}{@{}c@{}}
 1,1-\varepsilon,1-\varepsilon,1+2\varepsilon \\
\frac{3}{2},1-\nu,2+\nu\ \\
\end{array} \right| \frac{1}{T}\right)\\&{}-f_{\nu,\varepsilon}(T)-g_{\nu,\varepsilon}(T)-h_{\nu,\varepsilon}(T)\end{split}
\end{align}  holds for $|T|=1$ and $ T\neq1$, because the right-hand side of the equation above represents the  sum over all the residues of\begin{align}
 -\frac{\sin(\nu\pi)}{\pi}\frac{ \Gamma (1-2 s)  \Gamma (s)\Gamma (-\nu -s) \Gamma (1+\nu-s) \Gamma (1-2\varepsilon) [\Gamma (1+\varepsilon)]^2}{   \Gamma (1-s-2\varepsilon) [\Gamma (1-s+\varepsilon)]^2}\frac{2^{2 s}}{(-T)^s}
\end{align}  in the complex $s$-plane. (This is a variation on  the    Guillera--Rogers trick \cite[Lemma 1]{GuilleraRogers2014}.)

Secondly, it so happens that  the operator\begin{align}
\widehat{\mathscr A}- \left[ 3\varepsilon ^2-(1+2 \nu )^2 t(1-t)  \right]\frac{\partial}{\partial t}-(1-2t)\left[ \frac{2 \varepsilon ^3}{t (1-t)}-\frac{(1+2 \nu )^2}{2}  \right]
\end{align}simultaneously annihilates $ f_{\nu,\varepsilon}(4t(1-t)$, $g_{\nu,\varepsilon}(4t(1-t)$, and $h_{\nu,\varepsilon}(4t(1-t)$, thus the function\begin{align}
\begin{split}\varphi_\varepsilon(T)\colonequals {}&\lim_{\nu\to-1/2}\left[ f_{\nu,\varepsilon}(T)+g_{\nu,\varepsilon}(T)+h_{\nu,\varepsilon}(T) \right]\\={}&{_4F_3}\left(\left. \begin{array}{@{}c@{}}
 \frac{1}{2},\frac{1}{2},\frac{1}{2},1 \\
1+\varepsilon,1+\varepsilon,1-2\varepsilon\ \\
\end{array} \right| T\right)+\frac{16\varepsilon^{3}}{T}{_4F_3}\left(\left. \begin{array}{@{}c@{}}
 1,1-\varepsilon,1-\varepsilon,1+2\varepsilon \\
\frac{3}{2},\frac{3}{2},\frac{3}{2} \\
\end{array} \right| \frac{1}{T}\right)\label{eq:phi_eps_refl}
\end{split}
\end{align}satisfies
a homogeneous differential equation\begin{align}
\left[ \widehat{\mathscr A}-3\varepsilon^{2}\frac{\partial}{\partial t}-\frac{2\varepsilon^{3}(1-2t)}{t(1-t)}\right]\varphi_\varepsilon(4t(1-t))=0,
\end{align}and the relation \begin{align}
\left.\!\frac{\partial\varphi_\varepsilon(4t(1-t))}{\partial\varepsilon}\right|_{\varepsilon=0}\xlongequal{\text{\eqref{eq:phi_eps_refl}}}
\left.\!\frac{\partial}{\partial\varepsilon}{_4F_3}\left(\left. \begin{array}{@{}c@{}}
 \frac{1}{2},\frac{1}{2},\frac{1}{2},1 \\
1+\varepsilon,1+\varepsilon,1-2\varepsilon\ \\
\end{array} \right| 4t(1-t)\right)\right|_{\varepsilon=0}=0
\end{align}follows from termwise differentiation of the $ _4F_3$ series.

Thirdly, solving
\begin{align}
\begin{split}\widehat{\mathscr A}\left.\!\frac{\partial^{3}\varphi_\varepsilon(4t(1-t))}{\partial\varepsilon^{3}}\right|_{\varepsilon=0}={}&\frac{12(1-2t)}{t(1-t)}\varphi_0(4t(1-t))\underset{\eqref{eq:phi_eps_refl}}{\xlongequal{\text{\eqref{eq:KK}}}}\frac{12(1-2t)}{t(1-t)}\left( \frac{2}{\pi} \right)^2\big[\mathbf K\big(\sqrt{t}\big)\big]^2
\end{split}
\end{align}
by variation of parameters, we get\begin{align}
\begin{split}&\left.\!\frac{\partial^{3}\varphi_\varepsilon(4t(1-t))}{\partial\varepsilon^{3}}\right|_{\varepsilon=0}\\={}&-\left(\frac{2}{\pi}\right)^4\int_t^\infty\frac{24(1-2s)}{s(1-s)}\big[\mathbf K\big(\sqrt{s}\big)\big]^2\big[\mathbf K\big(\sqrt{\mathstrut 1-s}\big)\mathbf K\big(\sqrt{\mathstrut t}\big)-\mathbf K\big(\sqrt{\mathstrut s}\big)\mathbf K\big(\sqrt{\mathstrut 1-t}\big)\big]^{2}\D s\\&{}+\tilde c_1\big[\mathbf K\big(\sqrt{t}\big)\big]^2+\tilde c_2 \mathbf K\big(\sqrt{t}\big)\mathbf K\big(\sqrt{1-t}\big)+\tilde c_3 \big[\mathbf K\big(\sqrt{1-t}\big)\big]^2,
\end{split}
\end{align}where
the exact values of the constants $ \tilde c_1$, $\tilde c_2$, and $ \tilde c_3$ are irrelevant at this moment.

Lastly, recall from \cite[Proposition 4.8]{Zhou2025NotesBHS} that
\begin{align}
\begin{split}{}&\frac{24}{t(1-t)}{_4F_3}\left(\left. \begin{array}{@{}c@{}}
 1,1,1,1 \\
\frac{3}{2},\frac{3}{2},\frac{3}{2} \\
\end{array} \right| \frac{1}{4t(1-t)}\right)\\={}&-\left(\frac{2}{\pi}\right)^2\int_t^\infty\frac{24(1-2s)}{s(1-s)}\big[\mathbf K\big(\sqrt{\mathstrut 1-s}\big)\mathbf K\big(\sqrt{\mathstrut t}\big)-\mathbf K\big(\sqrt{\mathstrut s}\big)\mathbf K\big(\sqrt{\mathstrut 1-t}\big)\big]^{2}\D s
\end{split}\label{eq:4F3Rama}
\end{align}holds for $ |4t(1-t)|\geq1$, so the third-order derivative of \eqref{eq:phi_eps_refl} allows us to confirm the second equality in \eqref{eq:H3int2} for  $| 4t(1-t)|=1$---there are no non-trivial linear combinations of $ \big[\mathbf K\big(\sqrt{t}\big)\big]^2$, $ \mathbf K\big(\sqrt{t}\big)\mathbf K\big(\sqrt{1-t}\big)$, and $ \big[\mathbf K\big(\sqrt{1-t}\big)\big]^2$ in the  trailing term,  due to the  $ O(t)$ behavior of  \eqref{eq:H3int2} when analytically continued to the $ t\to0^+$ regime.   \end{proof}
\subsection{Representations through Eichler integrals and Epstein zeta functions\label{subsec:Thm1.2}}
For $ z/i>0$, we have [cf.\ \eqref{eq:zKratio}]\begin{align}
2=\frac{\partial}{\partial z}\frac{iP_{-1/2}(2\alpha_4(z)-1)}{P_{-1/2}(1-2\alpha_{4}(z))}=\frac{1}{\pi i\alpha_4(z)[1-\alpha_4(z)][P_{-1/2}(1-2\alpha_{4}(z))]^{2}}\frac{\partial\alpha_4(z)}{\partial z},\label{eq:alpha4'}
\end{align}along with Ramanujan's relations for the Eisenstein series \cite[pp.~126--127]{RN3}:\begin{align}
\left\{\begin{array}{@{}r@{{}=\left[P_{-1/2}(1-2\alpha_{4}(z))\right]^4}l}
E_{4}(z) & \left\{1+14\alpha_4(z)+[\alpha_{4}(z)]^{2}\right\},\\
E_{4}(2z) & \left\{1-\alpha_4(z)+[\alpha_{4}(z)]^{2}\right\}, \\
E_{4}(4z) & \left\{1-\alpha_4(z)+\frac{[\alpha_{4}(z)]^{2}}{16}\right\}, \\
\end{array}\right.
\end{align}and \begin{align}
\left\{\begin{array}{@{}r@{{}=\left[P_{-1/2}(1-2\alpha_{4}(z))\right]^6}l}
E_{6}(z) & [1+\alpha_{4}(z)]\left\{1-34\alpha_4(z)+[\alpha_{4}(z)]^{2}\right\},\\[3pt]
E_{6}(2z) & [1+\alpha_{4}(z)]\left[ 1-\frac{\alpha_{4}(z)}{2} \right][1-2\alpha_{4}(z)], \\[3pt]
E_{6}(4z) & \left[ 1-\frac{\alpha_{4}(z)}{2} \right]\left\{1-\alpha_4(z)+\frac{[\alpha_{4}(z)]^{2}}{32}\right\}, \\
\end{array}\right.\label{eq:E6Rama}
\end{align}where $ \alpha_{4}(z)=\lambda(2z)$ and  $ P_{-1/2}(1-2t)=\frac{2}{\pi}\mathbf K\big(\sqrt{t}\big)$. The same  identities admit analytic continuations to regions free from the obstructions by  the branch cut of $ \mathbf K\big(\sqrt{t}\big),t\in\mathbb C\smallsetminus[1,\infty)$.

The formulae listed in the last paragraph will become instrumental in the transition from  the integral representations in \S\ref{subsec:3deriv} to the Eichler integrals in Theorem \ref{thm:H3}. \begin{proof}[Proof of Theorem \ref{thm:H3}]\begin{enumerate}[leftmargin=*,  label=(\alph*),ref=(\alph*),
widest=d, align=left] \item
In parallel to \eqref{eq:sumE4}, we have  \begin{align}
0=E_6\left( z+\frac{1}{2} \right)+E_6(z)-66E_{6}(2z)+64E_6(4z),\label{eq:sumE6}
\end{align}so  \eqref{eq:E6Rama} can be reassembled into\begin{align}
E_6\left( z+\frac{1}{2} \right)-E_{6}(z)=\frac{63\left[P_{-1/2}(1-2\alpha_{4}(z))\right]^6[1-2\alpha_{4}(z)]\alpha_4(z)[1-\alpha_4(z)]}{2}.
\end{align}Combining the last displayed equation with \eqref{eq:zKratio} and \eqref{eq:alpha4'}, we find\begin{align}
\begin{split}&\int_0^{\lambda(2z)}(1-2s)\big[\mathbf K\big(\sqrt{s}\big)\big]^4\left[\frac{i\mathbf K\big(\sqrt{\mathstrut 1-s}\big)}{\mathbf K\big(\sqrt{\mathstrut s}\big)} -2z\right]^{2}\D s\\={}&\frac{\pi^5 i}{63}\int_{z+\frac{1}{2}}^{i\infty}[E_6(2w)-E_6(w)]\left(w-z-\frac{1}{2}\right)^{2}\D w=\frac{\pi^{5}i\big[8\mathscr E''_6\big(z+\frac12\big)-\mathscr E''_6(2z)\big]}{6048},
\end{split}
\end{align} which enables us to transcribe     \eqref{eq:H3int1}  into \eqref{eq:H3Q1}.

Essentially the same workflow brings us\begin{align}
\begin{split}&\frac{E_{4}(z)-16E_4(4z)}{15}-\frac{E_6\big( z+\frac{1}{2} \big)-64E_{6}(2z)}{63}\\={}&\left[P_{-1/2}(1-2\alpha_{4}(z))\right]^4\left\{ \left[P_{-1/2}(1-2\alpha_{4}(z))\right]^2-1 \right\}[1-2\alpha_{4}(z)]
\end{split}
\end{align}and\begin{align}
\begin{split}&\int_0^{\lambda(2z)}\frac{2(1-2s)}{s(1-s)}\big[\mathbf K\big(\sqrt{s}\big)\big]^2\left\{ \big[\mathbf K\big(\sqrt{s}\big)\big]^2 -\left(\frac{\pi}{2}\right)^{2}\right\}\left[\frac{i\mathbf K\big(\sqrt{\mathstrut 1-s}\big)}{\mathbf K\big(\sqrt{\mathstrut s}\big)} -2z\right]^{2}\D s\\={}&-\pi^5 i\int_{z}^{i\infty}\left[ \frac{E_{4}(w)-16E_4(4w)}{15}-\frac{E_6\big( w+\frac{1}{2} \big)-64E_{6}(2w)}{63} \right]\left(w-z\right)^{2}\D w\\={}&\frac{\pi^5 i\big[4\mathscr E_4(z)-\mathscr E_4(4z)\big]}{60}-\frac{\pi^{5}i\big[\mathscr E''_6\big(z+\frac12\big)-8\mathscr E''_6(2z)\big]}{756},
\end{split}
\end{align} which transform \eqref{eq:H3int2} into  \eqref{eq:H3Q2}.\item Divide both sides of \eqref{eq:H3Q1} and   \eqref{eq:H3Q2} by $ -\pi\I z$, before differentiating in $ \I z$. One may refer to  \eqref{eq:E(z,2)LR'} for an integral representation of the Epstein zeta function.  \qedhere\end{enumerate}
\end{proof}\begin{remark}We may convert \cite[(1.8), $ m=2$]{EZF}
 into the following form:\begin{align}
E(z,3)={}&(\I z)^3+\frac{2835 \zeta (5)}{8 \pi ^5(\I z)^2}-\frac{15}{8(\I z)^2}\RE\left\{ i\int_z^{i\infty}[1-E_6(w)](w-z)^{2}(w-\overline z)^{2}\D w \right\},\label{eq:E(z,3)LR'}
\end{align}where $ z\in\mathfrak H$.
[If $ 2\RE z\in\mathbb Z$, then the expression inside the braces of \eqref{eq:E(z,3)LR'} is real-valued.] Here, a polynomial identity $ i(w-z)^{2}(w-\overline z)^{2}=i(w-z)^{4}-4(w-z)^3\I z-4i(w-z)^{2}(\I z)^2$ allows us to put down\begin{align}
i\int_z^{i\infty}[1-E_6(w)](w-z)^{2}(w-\overline z)^{2}\D w=i\mathscr E_6^{}(z)+ \mathscr E'_6(z)\I z-\frac{i \mathscr E''_6(z)(\I z)^2}{3},\label{eq:braced}
\end{align} where \begin{align}
\mathscr E_6'(z)\colonequals \frac{\partial\mathscr E_6(z)}{\partial z}=4\int_{z}^{i\infty}[1-E_6(w)](z-w)^3\D w.
\end{align}In principle, one may reformulate  Theorem \ref{thm:H3} into a form similar to that of Theorem \ref{thm:H2}, getting rid of $ \mathscr E''_6$  while keeping only $ E(\cdot,3)$, $ \mathscr E_6^{}$, $ \mathscr E'_6$, and $ \mathscr E'''_6$. In practice, such a  reformulation is more cumbersome than helpful: as will be indicated at the end of \S\ref{subsec:specEichlerE6}, no  closed-form evaluations [in the spirit of \eqref{eq:Sun1}--\eqref{eq:Sun4}] will emerge even after one combines  \eqref{eq:E(z,3)LR'} with Theorem \ref{thm:H3}(a).
\eor\end{remark}
\subsection{Special  sum rules for $ \mathscr E_6$ and derivatives\label{subsec:specEichlerE6}}
Before delving into special values of $ \check {\mathscr T}_{\check r(z)}(z)$ \big[defined by \eqref{eq:Ur(z)}, via $ \mathscr E''_6$ and $ \mathscr E'''_6$\big] in the last column of Table \ref{tab:H3}, we give an overview of some  properties enjoyed by the function  $ \mathscr E_6(z)$ for generic $z\in\mathfrak H $. Note that the Eichler integral $ \mathscr E_6$ has a  Lambert--Ramanujan  avatar:\begin{align}
\mathscr E_6(z)=\frac{378i}{\pi^5}\sum_{n=1}^\infty\frac{1}{n^5(e^{-2\pi in z}-1)}.\tag{\ref{eq:EichlerE6defn}$'$}\label{eq:EichlerE6defn'}
\end{align}Thus, for $ m=2$, the Lambert--Ramanujan reflection formula \eqref{eq:Ramanujan_reflection_notebook} can be rewritten as \begin{align}
\mathscr E_6(z)-z^4\mathscr E_6\left( -\frac{1}{z} \right)=-\frac{\left(z^2+1\right) \left(2 z^4-9 z^2+2\right)}{10 z}-\frac{189\zeta (5)\left(z^4-1\right) }{\pi ^5i}.\label{eq:EichlerE6refl}
\end{align}Thanks to \eqref{eq:EichlerE6defn'} and \eqref{eq:E(z,3)LR'}, the sum rule \eqref{eq:sumE6} has two more  companions:\begin{align}
0=16\mathscr E_6\left( z+\frac{1}{2} \right)+16\mathscr E_6(z)-33\mathscr E_{6}(2z)+\mathscr E_6(4z)\label{eq:sumEichlerE6}
\end{align}and\begin{align}
0=4E\left( z+\frac{1}{2} ,3\right)+4E(z,3)-33E(2z,3)+4E(4z,3),\label{eq:E(z,3)add}
\end{align}both of which are valid  for all $ z\in\mathfrak H$.

The aforementioned identities will facilitate the computations of   $ \mathscr E_6(z)$ and its derivatives at special points $z$, which are indispensable to the verification of \eqref{eq:SunH3a}--\eqref{eq:SunH3d} in the next proposition.\begin{proposition}\label{prop:H3proof}\begin{enumerate}[leftmargin=*,  label=\emph{(\alph*)},ref=(\alph*),
widest=d, align=left] \item
We have \begin{align}
i\mathscr E''_6\left( \frac{1+\sqrt{3}i}{2} \right)+\frac{\sqrt{3}}{2}\mathscr E'''_6\left( \frac{1+\sqrt{3}i}{2} \right)=3\sqrt{3}-\frac{168\zeta (3)}{\pi^{3}},\label{eq:EichlerE6isqrt3}
\end{align}which entails\begin{align}
\check {\mathscr T}_{1/64}\left( \frac{\sqrt{3}i}{2} \right)=\frac{25\zeta(3)}{24\pi}.\label{eq:Usqrt3i}
\end{align}
\item We have \begin{align}
\begin{split}{}&2i\left[ 39\mathscr E''_6\left( \frac{1+\sqrt{7}i}{2} \right) -4\mathscr E''_6\left( \sqrt{7}i \right)\right]+\sqrt{7}\left[ 39\mathscr E'''_6\left( \frac{1+\sqrt{7}i}{2} \right) -8\mathscr E'''_6\left( \sqrt{7}i \right)\right]\\={}&98 \sqrt{7}-\frac{6912 \zeta (3)}{\pi ^3},
\end{split}\label{eq:EichlerE6isqrt7}
\end{align}which entails\begin{align}
\check {\mathscr T}_{1/352}\left( \frac{\sqrt{7}i}{2} \right)=\frac{555 \zeta (3)}{2156 \pi }.\label{eq:Uisqrt7}
\end{align}
\item
We have \begin{align}
i\mathscr E''_6\left( \frac{i}{\sqrt{2}} \right)+i\mathscr E''_6\left( \sqrt{2}i \right)+\frac{1}{\sqrt{2}}\mathscr E'''_6\left( \frac{i}{\sqrt{2}} \right)+\sqrt{2}\mathscr E'''_6\left( \sqrt{2}i \right)=18 \sqrt{2}-\frac{567 \zeta (3)}{\pi ^3},\label{eq:EichlerE6isqrt2}
\end{align}which entails\begin{align}
\check {\mathscr T}_{1/8}\left(\frac{1}{2}+ \frac{{}i}{\sqrt{2}} \right)=\frac{15\zeta(3)}{4\pi}.
\label{eq:Uisqrt2}\end{align}

\item We have \begin{align}
i\mathscr E''_6(i)+\mathscr E'''_6(i)=8-\frac{189\zeta (3)}{\pi^{3}},\label{eq:EichlerE6i}
\end{align}which entails\begin{align}
\check {\mathscr T}_{1/64}\left( \frac{1}{2} +i\right)=\frac{57 \zeta (3)}{64 \pi }.\label{eq:Ui}
\end{align}

\end{enumerate}
\end{proposition}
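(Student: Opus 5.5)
The plan mirrors the proof of Proposition \ref{prop:E4''sum}, with the weight-4 objects replaced by weight-6 ones. Each of the four evaluations \eqref{eq:EichlerE6isqrt3}, \eqref{eq:EichlerE6isqrt7}, \eqref{eq:EichlerE6isqrt2}, \eqref{eq:EichlerE6i} combines only $\mathscr E''_6$ and $\mathscr E'''_6$, i.e.\ $\int[1-E_6(w)](z-w)^2\D w$ and $\int[1-E_6(w)](z-w)\D w$.

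\textbf{Step 1: the derivative reflection formula.} Differentiate the reflection formula \eqref{eq:EichlerE6refl} three times in $z$. This expresses $\mathscr E'''_6(z)$ through $z^{-2}\mathscr E'''_6(-1/z)$ plus lower derivatives $\mathscr E_6,\mathscr E_6',\mathscr E_6''$ at $-1/z$ with explicit polynomial weights. It also produces an elementary term and a $\zeta(5)$ term; the latter arises from the $z^4-1$ factor and is linear in $z$ after three derivatives.

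\textbf{Step 2: specialize and take imaginary parts.}

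\begin{itemize}
\item For (a), take $z=\frac{-1+\sqrt3 i}{2}$, which is fixed up to translation by $z\mapsto -1/z$.
\item For (d), take $z=i$.
\item For (c), take $z=i/\sqrt2$, which swaps with $\sqrt2 i$.
\item For (b), first differentiate the sum rule \eqref{eq:sumEichlerE6} twice (or three times). Then evaluate at $z=-\frac{2}{1+\sqrt7 i}$ as in Proposition \ref{prop:E4''sum}(b), using the relations $\frac12-\frac1z=\frac1{1-z}$ and $-\frac2z=z-1$ to fold every argument back onto $\frac{1+\sqrt7 i}{2}$ and $\sqrt7 i$ after applying the derivative reflection formula.
\end{itemize}

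At these points, with $2\RE z\in\mathbb Z$, the Lambert--Ramanujan avatar \eqref{eq:EichlerE6defn'} shows that $i\mathscr E_6$, $\mathscr E_6'$, $i\mathscr E_6''$ and $\mathscr E_6'''$ have definite reality. Extracting the real or imaginary part therefore isolates the desired combination.

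The leftover lower-order combination $i\mathscr E_6+\mathscr E_6'\I z-\tfrac{i}{3}\mathscr E_6''(\I z)^2$ is exactly the braced integral \eqref{eq:braced}. By \eqref{eq:E(z,3)LR'} it equals a multiple of $E(z,3)$ plus $(\I z)^3$ and $\zeta(5)$ terms. The $\zeta(5)$ contributions should cancel against those from Step 1, and $E(z,3)$ at these CM points is a rational multiple of $\zeta(3)/\pi^3$ by Table \ref{tab:H3}, via \eqref{eq:E(z,3)add} where needed. This yields the claimed values $3\sqrt3-168\zeta(3)/\pi^3$, and so on.

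A subtlety is that Step 1 mixes $\mathscr E_6''$ with $\mathscr E_6$ and $\mathscr E_6'$ in a different ratio than \eqref{eq:braced}. One may therefore need to use the second-derivative reflection formula as well, combining the two linearly so that only the braced combination remains alongside $\mathscr E''_6$ and $\mathscr E'''_6$.

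\textbf{Step 3: the consequences \eqref{eq:Usqrt3i}, \eqref{eq:Uisqrt7}, \eqref{eq:Uisqrt2}, \eqref{eq:Ui}.} Specialize $\check r$ in \eqref{eq:Ur(z)} to $\frac1{64}$, $\frac1{352}$, $\frac18$, $\frac1{64}$ respectively and simplify $\check{\mathscr T}_{\check r}(z)$ for generic $z$ into a single combination of $\mathscr E_6''$ and $\mathscr E_6'''$ at $z+\frac12$ and $2z$. The choice of $\check r$ is precisely what makes this combination proportional to the left-hand side just evaluated, possibly after one use of \eqref{eq:sumEichlerE6} differentiated. Substituting then gives rational multiples of $\zeta(3)/\pi$, after the $\sqrt d$ constants cancel against $\frac{8\pi^2\I z}{3}$ and the $\zeta(3)/(\pi(\I z)^2)$ term.

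\textbf{Main obstacle.} I expect case (b) to be hardest. After applying the thrice-differentiated reflection at two conjugate points $-\frac{2}{\pm1+\sqrt7 i}$, many lower-order terms at $\frac{1+\sqrt7 i}{2}$ appear with complex coefficients. Their imaginary parts must assemble into the braced integral so that only $E\big(\frac{1+\sqrt7 i}2,3\big)=\frac{540\zeta(3)}{7\pi^3}$ survives, giving the coefficients $39$, $4$ and $8$. I would first carry out the bookkeeping symbolically for general $z$ before specializing.
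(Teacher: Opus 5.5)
Your plan is correct and is essentially the paper's argument: Taylor coefficients of \eqref{eq:EichlerE6refl} at the special points (and, for (b), of \eqref{eq:sumEichlerE6}), the reality of $i\mathscr E_6,\mathscr E_6',i\mathscr E_6'',\mathscr E_6'''$ there, the Epstein representation \eqref{eq:E(z,3)LR'} via \eqref{eq:braced}, and finally a direct specialization of $\check{\mathscr T}_{\check r}$, which for the given $\check r$ is already proportional to the evaluated combination.

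The only difference is which Taylor orders you combine, and your choice also closes the linear system:
\begin{itemize}
\item The paper uses orders $0,1,3$. In (b) it uses $\varPhi_0,\varPhi_1,\varPhi_3$ together with both $E\big(\frac{1+\sqrt7 i}{2},3\big)$ and $E(\sqrt7 i,3)$.
\item You use orders $2$ and $3$. At $\frac{1+\sqrt3 i}{2}$ and $i$ the second-order coefficient contains no $\mathscr E_6''$, and it supplies exactly the missing constraint on $\mathscr E_6$ and $\mathscr E_6'$.
\item For (b), apply $\big(2\sqrt7\,\partial_z^3+8i\,\partial_z^2\big)$ to $\mathscr E_6(z)-z^4\mathscr E_6(-1/z)$ at $z=-\frac{2}{\pm1+\sqrt7 i}$ and sum. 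Then subtract $2i\varPhi_2+\sqrt7\varPhi_3$ and add $96\big[i\mathscr E_6+\frac{\sqrt7}{2}\mathscr E_6'-\frac{7i}{12}\mathscr E_6''\big]\big(\frac{1+\sqrt7 i}{2}\big)$. This reproduces \eqref{eq:EichlerE6isqrt7} exactly, including $98\sqrt7$ and $-\frac{6912\zeta(3)}{\pi^3}$.
\end{itemize}
So, as you predicted, only $E\big(\frac{1+\sqrt7 i}{2},3\big)$ is needed in (b).
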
\begin{proof}\begin{enumerate}[leftmargin=*,  label=(\alph*),ref=(\alph*),
widest=d, align=left] \item
Representing  $ E\left(\frac{1+\sqrt{3}i}{2},3\right)=\frac{105\zeta(3)}{2\pi^3}$ through \eqref{eq:E(z,3)LR'}, while bearing in mind the relation in \eqref{eq:braced}, we may establish \begin{align}
i\mathscr E_6^{}\left( \frac{1+\sqrt{3}i}{2} \right)+\frac{\sqrt{3}}{2}\mathscr E_6'\left( \frac{1+\sqrt{3}i}{2} \right)-\frac{i}{4}\mathscr E_6''\left( \frac{1+\sqrt{3}i}{2} \right)=\frac{189 \zeta (5)}{\pi ^5}-\frac{21 \zeta (3)}{\pi ^3}+\frac{3 \sqrt{3}}{20}.
\end{align} Since $ \mathscr E_6(z)=\mathscr E_6(z+1)$ for all $ z\in\mathfrak H$, the zeroth-, first- and  third-order derivatives of \eqref{eq:EichlerE6refl} at $ z=\frac{1+\sqrt{3}i}{2}$ simplify to\begin{align}\mathscr E_6^{}\left( \frac{1+\sqrt{3}i}{2} \right)=\frac{189\zeta (5)  }{\pi ^5i}+
\frac{11 \sqrt{3} i}{30},
\end{align} \begin{align}
4\mathscr E_6^{}\left( \frac{1+\sqrt{3}i}{2} \right)+\frac{3-\sqrt{3}i}{2}\mathscr E_6'\left( \frac{1+\sqrt{3}i}{2} \right)=\frac{756\zeta (5)  }{\pi ^5i}+\frac{29 \sqrt{3} i}{20}+\frac{1}{20},
\end{align}and\begin{align}
\begin{split}&-48\mathscr E_6^{}\left( \frac{1+\sqrt{3}i}{2} \right)-18\big(1-\sqrt{3}i\big)\mathscr E_6'\left( \frac{1+\sqrt{3}i}{2} \right)\\&{}+6\big(1+\sqrt{3}i\big)\mathscr E_6''\left( \frac{1+\sqrt{3}i}{2} \right)+\big(3-\sqrt{3}i\big)\mathscr E_6'''\left( \frac{1+\sqrt{3}i}{2} \right)\\={}&-\frac{9072 \zeta (5) }{\pi ^5i}-15  \sqrt{3}i-\frac{33}{5},
\end{split}
\end{align} respectively. Solving these simultaneous equations, we find\begin{align}
\left\{\begin{array}{@{}r@{\left( \frac{1+\sqrt{3}i}{2} \right)={}}l}
\mathscr E_6' & \frac{1}{30}, \\[7pt]
\mathscr E_6'' & \frac{84 \zeta (3)}{\pi ^3 i}+2 \sqrt{3}  i, \\[7pt]
\mathscr E_6''' & 10-\frac{168 \sqrt{3} \zeta (3)}{\pi ^3}, \\
\end{array}\right.
\end{align}thereby verifying \eqref{eq:EichlerE6isqrt3} \textit{a fortiori}.

Studying \begin{align}
\check {\mathscr T}_{1/64}\left( z \right)=-\frac{\pi^2i\mathscr E''_6\big(z+\frac12\big)}{192(\I z)^2}-\frac{\pi^2\mathscr E'''_6\big(z+\frac12\big)}{192\I z}+\frac{\pi^2\I z}{24}-\frac{3\zeta(3)}{32\pi(\I z)^2}\label{eq:U_64}
\end{align}for $ z=\frac{\sqrt{3}i}{2}$, we get \eqref{eq:Usqrt3i}.
\item The special values $E\left(\frac{1+\sqrt{7}i}{2},3\right)=\frac{540 \zeta (3)}{7 \pi ^3} $ and $E\left(\sqrt{7}i,3\right)=\frac{3375 \zeta (3)}{7 \pi ^3} $ (cf.\ Table \ref{tab:H3}) correspond to \begin{align}
i\mathscr E_6^{}\left( \frac{1+\sqrt{7}i}{2} \right)+\frac{\sqrt{7}}{2}\mathscr E_6'\left( \frac{1+\sqrt{7}i}{2} \right)-\frac{7i}{12}\mathscr E_6''\left( \frac{1+\sqrt{7}i}{2} \right)={}&\frac{189 \zeta (5)}{\pi ^5}-\frac{72 \zeta (3)}{\pi ^3}+\frac{49 \sqrt{7}}{60}\label{eq:E6isqrt7(a)}\intertext{and}i\mathscr E_6^{}\left(\sqrt{7}i\right)+\sqrt{7}\mathscr E_6'\left(\sqrt{7}i\right)-\frac{7i}{3}\mathscr E_6''\left(\sqrt{7}i\right)={}&\frac{189 \zeta (5)}{\pi ^5}-\frac{1800 \zeta (3)}{\pi ^3}+\frac{392 \sqrt{7}}{15},
\end{align} Extracting the Taylor coefficients of \eqref{eq:sumEichlerE6}, up to the  $ O\left( \left(z+\frac{2}{1+\sqrt{7}i}\right) ^{3}\right)$ term, we get{\allowdisplaybreaks
\begin{align}
16\mathscr E_6^{}\left( -\frac{2}{-1+\sqrt{7}i} \right)+16\mathscr E_6^{}\left( -\frac{2}{1+\sqrt{7}i} \right)-33\mathscr E_6^{}\left( \frac{1+\sqrt{7}i}{2} \right)+\mathscr E_6^{}\left(\sqrt{7}i\right)={}&0,\\8\mathscr E_6'\left( -\frac{2}{-1+\sqrt{7}i} \right)+8\mathscr E_6'\left( -\frac{2}{1+\sqrt{7}i} \right)-33\mathscr E_6'\left( \frac{1+\sqrt{7}i}{2} \right)+2\mathscr E_6'\left(\sqrt{7}i\right)={}&0,\\4\mathscr E_6''\left( -\frac{2}{-1+\sqrt{7}i} \right)+4\mathscr E_6''\left( -\frac{2}{1+\sqrt{7}i} \right)-33\mathscr E_6''\left( \frac{1+\sqrt{7}i}{2} \right)+4\mathscr E_6''\left(\sqrt{7}i\right)={}&0,\\2\mathscr E_6'''\left( -\frac{2}{-1+\sqrt{7}i} \right)+2\mathscr E_6'''\left( -\frac{2}{1+\sqrt{7}i} \right)-33\mathscr E_6'''\left( \frac{1+\sqrt{7}i}{2} \right)+8\mathscr E_6'''\left(\sqrt{7}i\right)={}&0.\label{eq:Phi3}
\end{align}
}To ease later reference, we denote the expressions on the left-hand sides of the last four equations by $\varPhi_0 $, $ \varPhi_1$, $ \varPhi_2$, and $ \varPhi_3$, respectively. Pick a differential operator \begin{align}
\widehat{\mathscr D}\colonequals2\sqrt{7} \frac{\partial^3}{\partial z^3}+\frac{96}{\sqrt{7}}\frac{\partial}{\partial z}+\frac{384 i}{7}.
\end{align}If we interpret the expression\begin{align}
\begin{split}&\left.\!\widehat{\mathscr D}\left[\mathscr E_6(z)-z^4\mathscr E_6\left( -\frac{1}{z} \right)\right]\right|_{z=-\frac{2}{-1+\sqrt{7}i}}+\left.\!\widehat{\mathscr D}\left[\mathscr E_6(z)-z^4\mathscr E_6\left( -\frac{1}{z} \right)\right]\right|_{z=-\frac{2}{1+\sqrt{7}i}}\\&{}+\frac{72}{7}\left[ i\mathscr E_6^{}\left( \frac{1+\sqrt{7}i}{2} \right)+\frac{\sqrt{7}}{2}\mathscr E_6'\left( \frac{1+\sqrt{7}i}{2} \right)-\frac{7i}{12}\mathscr E_6''\left( \frac{1+\sqrt{7}i}{2} \right) \right]\\&{}+\frac{24}{7}\left[ i\mathscr E_6^{}\left(\sqrt{7}i\right)+\sqrt{7}\mathscr E_6'\left(\sqrt{7}i\right)-\frac{7i}{3}\mathscr E_6''\left(\sqrt{7}i\right) \right]-\frac{24i\varPhi_0}{7} -\frac{12\varPhi_1}{\sqrt{7}}-\sqrt{7}\varPhi_3\end{split}\label{eq:sqrt7bal}
\end{align}literally, without resorting to any functional equations other than $ \mathscr E_6(z)= \mathscr E_6(z+1)$ and its derivatives, then we arrive at the left-hand side of \eqref{eq:EichlerE6isqrt7}. If we simplify \eqref{eq:sqrt7bal}  through the right-hand sides of   \eqref{eq:EichlerE6refl} and \eqref{eq:E6isqrt7(a)}--\eqref{eq:Phi3}, then we end up with the right-hand side of
\eqref{eq:EichlerE6isqrt7}.

To verify \eqref{eq:Uisqrt7}, simply investigate the $ z=\frac{\sqrt{7}i}{2}$ case of \begin{align}
\begin{split}\check{\mathscr T}_{1/352}(z)={}&-\frac{\pi^{2}i\left[39 \mathscr E''_6\big(z+\frac12\big)-4\mathscr E''_6(2z) \right]}{7392(\I z)^2}-\frac{\pi^{2}i\left[39 \mathscr E'''_6\big(z+\frac12\big)-8\mathscr E'''_6(2z) \right]}{7392\I z}\\&{}+\frac{\pi^2\I z}{132}-\frac{3\zeta(3)}{176\pi(\I z)^2}.
\end{split}
\end{align}\item By virtue of  \eqref{eq:E(z,3)LR'} and \eqref{eq:braced}, we may transcribe $ E\left(\frac{i}{\sqrt{2}},3\right)=E\left(\sqrt{2}i,3\right)=\frac{2835 \zeta (3)}{32 \pi ^3}$ into the following pair of identities:\begin{align}
i\mathscr E_6^{}\left(\frac{i}{\sqrt{2}}\right)+\frac{1}{\sqrt{2}}{}\mathscr E_6'\left(\frac{i}{\sqrt{2}}\right)-\frac{i}{6}\mathscr E_6''\left(\frac{i}{\sqrt{2}}\right)={}&\frac{189 \zeta (5)}{\pi ^5}-\frac{189 \zeta (3)}{8 \pi ^3}+\frac{\sqrt{2}}{15},\label{eq:E6isqrt2(a)}\\i\mathscr E_6^{}\left(\sqrt{2}i\right)+{\sqrt{2}}{}\mathscr E_6'\left(\sqrt{2}i\right)-\frac{2i}{3}\mathscr E_6''\left(\sqrt{2}i\right)={}&\frac{189 \zeta (5)}{\pi ^5}-\frac{189 \zeta (3)}{2 \pi ^3}+\frac{32 \sqrt{2}}{15}.\label{eq:E6isqrt2(b)}
\end{align}Evaluating \begin{align}
\begin{split}&{}\left.\!-\left( \frac{1}{2\sqrt{2}}\frac{\partial^{3}}{\partial z^{3}}+\frac{3}{\sqrt{2}}\frac{\partial}{\partial z}+3i \right)\left[\frac{\mathscr E_6(z)}{z^{2}}-z^2\mathscr E_6\left( -\frac{1}{z} \right)\right]\right|_{z=\frac{i}{\sqrt{2}}}
\\&{}+30\left[ i\mathscr E_6^{}\left(\frac{i}{\sqrt{2}}\right)+\frac{1}{\sqrt{2}}{}\mathscr E_6'\left(\frac{i}{\sqrt{2}}\right)-\frac{i}{6}\mathscr E_6''\left(\frac{i}{\sqrt{2}}\right)\right]\\&{}-\frac{3}{2}\left[ i\mathscr E_6^{}\left(\sqrt{2}i\right)+{\sqrt{2}}{}\mathscr E_6'\left(\sqrt{2}i\right)-\frac{2i}{3}\mathscr E_6''\left(\sqrt{2}i\right) \right] \end{split}
\end{align}with the aid of \eqref{eq:EichlerE6refl}, \eqref{eq:E6isqrt2(a)} and \eqref{eq:E6isqrt2(b)}, we recover \eqref{eq:EichlerE6isqrt2}.

The formula \begin{align}
\begin{split}\check{\mathscr T}_{1/8}(z)={}&-\frac{\pi^{2}i\left[ \mathscr E''_6\big(z+\frac12\big)+\mathscr E''_6(2z) \right]}{216(\I z)^2}-\frac{\pi^{2}i\left[ \mathscr E'''_6\big(z+\frac12\big)+2\mathscr E'''_6(2z) \right]}{216\I z}\\&{}+\frac{\pi^2\I z}{3}-\frac{3\zeta(3)}{4\pi(\I z)^2}
\end{split}
\end{align}degenerates to \eqref{eq:Uisqrt2} when $ z=\frac12+\frac{i}{\sqrt{2}}$.

\item Similar to part (a), we  build an equation\begin{align}
i\mathscr E_6^{}(i)+{}\mathscr E_6'(i)-\frac{i}{3}\mathscr E_6''(i)=\frac{189 \zeta (5)}{\pi ^5}-\frac{63 \zeta (3)}{2 \pi ^3}+\frac{8}{15}\label{eq:E6i(a)}
\end{align} on the integral representation of $ E(i,3)=\frac{945\zeta(3)}{16\pi^3}$, while \begin{align}
2i\mathscr E_6^{}(i)+{}\mathscr E_6'(i)=\frac{378 \zeta (5)}{\pi ^5}-\frac{13}{10}\label{eq:E6i(b)}
\end{align}and \begin{align}
12i\mathscr E_6^{}(i)+9{}\mathscr E_6'(i)-3i\mathscr E_6''(i)-\mathscr E_6'''(i)=\frac{2268 \zeta (5)}{\pi ^5}-\frac{87}{10}\label{eq:E6i(c)}
\end{align}follow from the $ O(z-i)$  and   $ O\big((z-i)^3\big)$ terms in the  Taylor expansion of \eqref{eq:EichlerE6refl}.\footnote{The $ O((z-i)^0)$ term leaves us only a trivial identity ``$0=0 $'', while the $ O((z-i)^2)$ term produces the same information as the $ O(z-i)$ term.} Via the linear combination $ 6\times$\eqref{eq:E6i(a)}$ +3\times$\eqref{eq:E6i(b)}$-$\eqref{eq:E6i(c)}, we may deduce \eqref{eq:EichlerE6i} immediately.

Specializing \eqref{eq:U_64} to $ z=\frac12+i$, we confirm \eqref{eq:Ui}.\qedhere\end{enumerate}
\end{proof}\begin{remark}As demonstrated above, one may evaluate  $ \mathscr E''_6\left(\frac{1+\sqrt{3}i}{2}\right)$  without knowing $ \mathscr E'''_6\left(\frac{1+\sqrt{3}i}{2}\right)$ beforehand.  Therefore, we can deduce   the following identity from  Theorem \ref{thm:H3}(a):{
\begin{align}\frac{\displaystyle\sum_{k=0}^\infty\binom{2k}k^3\left[\mathsf H_{2k}^{(3)}-\frac{7}{64}\mathsf H_k^{(3)}\right]\frac{1}{2^{8k}}}{\displaystyle\sum_{k=0}^\infty\binom{2k}k^3\frac{1}{2^{8k}}}=\frac{\pi ^3}{32 \sqrt{3}}-\frac{7 \zeta (3)}{16}-\frac{\pi^{3}i\left[ 4 \mathscr E_4\left( \frac{\sqrt{3}i}{2} \right)-\mathscr E_4\left( 2\sqrt{3}i \right)\right]}{960}.
\end{align}
}This is  by far the closest peer to  \eqref{eq:Sun1}--\eqref{eq:Sun4} that involves third-order harmonic numbers. There are at least two reasons why we cannot produce more  series evaluations in this vein: alongside our  inability to isolate $ \mathscr E''_6(z)$ from various sum rules for generic $z$,   we are also hampered by the difficulty of turning $ 4\mathscr E_4(z)-\mathscr E_4(4z)$  into familiar mathematical constants  for the  special cases of   $z$ listed in Table \ref{tab:H3}.
\eor\end{remark}
\section{Discussions and outlook\label{sec:outlook}}
\subsection{Infinite series involving $ \binom{2k}{k}^2$, $ \mathsf H^{(2)}_{k}$, and $ \mathsf H^{(2)}_{2k}$\label{subsec:sqr}}
Here, we construct a partial analog of Theorem \ref{thm:H2}(a) when we have $ \binom{2k}{k}^2$ in each summand, instead of $ \binom{2k}{k}^3$.  \begin{proposition}If  $z$ satisfies the following constraints:\begin{align}
\I z>0,\;|\RE z|\leq\frac12,\;|\alpha_4(z)|<1,
\end{align}then
 we have {\allowdisplaybreaks
\begin{align}
\begin{split}\frac{\displaystyle \sum_{k=0}^\infty\binom{2k}{k}^2\left[ \mathsf H_{2k}^{(2)}-\frac{1}{4}\mathsf H_{ k}^{(2)} \right]\left[\frac{\alpha_{4}(z)}{2^{4}}\right]^k}{\displaystyle  \sum_{k=0}^\infty\binom{2k}{k}^2\left[\frac{\alpha_{4}(z)}{2^{4}}\right]^k}={}&\sum_{n=0}^\infty\frac{1}{(2n+1)^{2}\cosh^2\frac{(2n+1)\pi z_{}}{i}},\end{split}\label{eq:LR1sqr}\\\begin{split}\frac{\displaystyle \sum_{k=0}^\infty\binom{2k}{k}^2\mathsf H_{ k}^{(2)}\left[\frac{\alpha_{4}(z)}{2^{4}}\right]^k}{\displaystyle  \sum_{k=0}^\infty\binom{2k}{k}^2\left[\frac{\alpha_{4}(z)}{2^{4}}\right]^k}={}&\sum_{n=1}^\infty\frac{2}{n^{2}\cosh\frac{2n\pi z_{}}{i}}-\sum_{n=1}^\infty\frac{1}{n^{2}\cosh^2\frac{2n\pi z_{}}{i}},
\end{split}\label{eq:LR2sqr}
\end{align}
}where \begin{align}
\sum_{k=0}^\infty\binom{2k}{k}^2\left[\frac{\alpha_{4}(z)}{2^{4}}\right]^k=\frac{2}{\pi}\mathbf K\big(\sqrt{\alpha_4(z)}\big)=P_{-1/2}(1-2\alpha_4(z)).
\end{align}
\end{proposition}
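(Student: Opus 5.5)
The plan is to follow the proof of the $\binom{2k}k^3$ analogue \eqref{eq:LR1}--\eqref{eq:LR2}, replacing Clausen's product formula by the single Legendre function. By termwise differentiation, with $t=\alpha_4(z)$,
\begin{align*}
\sum_{k=0}^\infty\binom{2k}{k}^2\left[ \mathsf H_{2k}^{(2)}-\frac{1}{4}\mathsf H_{ k}^{(2)} \right]\frac{t^k}{2^{4k}}=-\frac18\left.\frac{\partial^2P_\nu(1-2t)}{\partial\nu^2}\right|_{\nu=-1/2}.
\end{align*}
Indeed, $(-\nu)_k(1+\nu)_k=\prod_{m=0}^{k-1}[(m+\frac12)^2-(\nu+\frac12)^2]$, so the second $\nu$-derivative at $-\frac12$ produces $-2\sum_{m<k}(m+\frac12)^{-2}=-8(\mathsf H^{(2)}_{2k}-\frac14\mathsf H^{(2)}_k)$ times $(\frac12)_k^2$. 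The second $\varepsilon$-derivative of $\frac1{\Gamma(1+\varepsilon)}{}_2F_1(\frac12,\frac12;1+\varepsilon;t)$ at $\varepsilon=0$ similarly produces the $\mathsf H^{(2)}_k$ series, together with the $\gamma_0$-dependent terms that are visible in \eqref{eq:eps'}. After removing the $(t/(1-t))^{\varepsilon/2}$ factor from \eqref{eq:Pnumu}, I will therefore express $\sum\binom{2k}k^2\mathsf H^{(2)}_k(t/16)^k$ through $\partial_\varepsilon P^{-\varepsilon}_{-1/2}$ and $\partial^2_\varepsilon P^{-\varepsilon}_{-1/2}$ at $\varepsilon=0$. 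These are the same objects that were already solved in the proof of Lemma \ref{lm:var_para}. The difference is that we now divide by $P_{-1/2}(1-2t)=\frac2\pi\mathbf K(\sqrt t)$ instead of by its square.

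For \eqref{eq:LR1sqr}, I substitute the variation-of-parameters formula for $\partial_\nu^2P_\nu$ from the proof of Lemma \ref{lm:var_para}. Dividing by $\frac2\pi\mathbf K(\sqrt t)$ and using \eqref{eq:zKratio} (with $2z$ in place of $z$), the quotient becomes a constant multiple of
\begin{align*}
\int_0^{\lambda(2z)}\big[\mathbf K\big(\sqrt{s}\big)\big]^2\left[\frac{i\mathbf K\big(\sqrt{1-s}\big)}{\mathbf K\big(\sqrt s\big)}-2z\right]\D s.
\end{align*}
This is exactly the $z$-derivative of the integral quoted from \cite[(4.8)]{EZF} in the proof of \eqref{eq:LR1}. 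Differentiating the right-hand side $-\sum(2n+1)^{-3}[e^{(2n+1)2\pi z/i}+1]^{-1}$ gives $\cosh^{-2}$ terms. Comparing with \eqref{eq:LR1}, the answer should be exactly half of that right-hand side, since there we had $\mathbf K^2$ in both numerator and denominator and the extra factor $\mathbf K(\sqrt t)$ cancels identically. In fact, \eqref{eq:nu''} shows that the $\binom{2k}k^3$ series equals $\frac2\pi\mathbf K(\sqrt t)$ times $-\frac{1}{2\pi}\partial_\nu^2P_\nu$, and our new numerator is $-\frac18\partial_\nu^2P_\nu$. The ratio of the two normalizations therefore produces the factor $\frac12$ directly, with no new integral to compute. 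I would first establish this for $0<\alpha_4(z)\le\frac12$ and then extend it by analytic continuation to $|\alpha_4(z)|<1$.

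For \eqref{eq:LR2sqr}, I will likewise reuse the $\varepsilon$-computation: the quantity $\partial_\varepsilon^2P^\varepsilon_{-1/2}$ including the constants $c_1,c_2$, and $\partial_\varepsilon P^\varepsilon_{-1/2}$ from \eqref{eq:eps'}. The $\gamma_0+2\log2$ terms must cancel. After dividing by $\frac2\pi\mathbf K(\sqrt t)$, what remains is the same integral that appears in \eqref{eq:eps''} (weight $\frac{1}{s(1-s)}$) divided by $\mathbf K(\sqrt t)^2$, plus elementary terms in $\mathbf K(\sqrt{1-t})/\mathbf K(\sqrt t)=2z/i$ and $G$. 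The main obstacle is here. Unlike \eqref{eq:eps''}, the combination is no longer linear in $\mathscr Q_2$, because the $[\partial_\varepsilon P^{-\varepsilon}]^2$ term enters with a different weight relative to $P\,\partial^2_\varepsilon P$. So the $\sinh^{-2}$ sums of \eqref{eq:LR2} must recombine into $\cosh^{-1}$ and $\cosh^{-2}$ series. My first approach is to avoid the integrals altogether: I would check the identity in $q=e^{2\pi iz}$ by expressing $\mathbf K(\sqrt{\lambda(2z)})$ through $\theta_3^2(2z)$, so that both sides become $q$-series. The logarithmic-derivative relations $\partial_z\log\theta_3$ give $\sum n^{-2}$-type Lambert series whose coefficients match $\frac{2}{n^2\cosh}-\frac{1}{n^2\cosh^2}$. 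The matching $\zeta(3)$ and $G$ constants will be fixed by evaluating both sides at $z=\frac i2$, as in the justification of \eqref{eq:'eps''}.
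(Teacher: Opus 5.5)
Your argument for \eqref{eq:LR1sqr} is correct and is the paper's. The numerator is $-\frac18\partial_\nu^2P_\nu(1-2t)|_{\nu=-1/2}$, while by \eqref{eq:nu''} the cubic numerator is $-\frac14P_{-1/2}(1-2t)\,\partial_\nu^2P_\nu(1-2t)|_{\nu=-1/2}$, so the new quotient is exactly half of \eqref{eq:LR1}. (Your ``$\frac2\pi\mathbf K(\sqrt t)$ times $-\frac1{2\pi}\partial_\nu^2P_\nu$'' should read $\frac2\pi\mathbf K(\sqrt t)$ times $-\frac14\partial_\nu^2P_\nu$; only this normalization gives the ratio $\frac12$.)

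The genuine gap is in \eqref{eq:LR2sqr}: a one-parameter $\varepsilon$-deformation of a Legendre function does not produce the $\mathsf H_k^{(2)}$ series. Since $k!/(1+\varepsilon)_k=\exp\bigl(-\varepsilon\mathsf H_k+\frac{\varepsilon^2}{2}\mathsf H_k^{(2)}-\cdots\bigr)$, you actually get
\[\left.\frac{\partial^2}{\partial\varepsilon^2}\frac{{}_2F_1\bigl(\frac12,\frac12;1+\varepsilon;t\bigr)}{\Gamma(1+\varepsilon)}\right|_{\varepsilon=0}=\sum_{k=0}^\infty\binom{2k}k^2\Bigl[(\mathsf H_k-\gamma_0)^2+\mathsf H_k^{(2)}-\zeta(2)\Bigr]\frac{t^k}{2^{4k}}.\]
Up to second order, the Taylor data of the family $P^{-\varepsilon}_{-1/2}$ consist only of $P_{-1/2}$, the $\mathsf H_k$ series, and this combination. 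So nothing separates the $(\mathsf H_k)^2$ series from the $\mathsf H_k^{(2)}$ series. In Lemma \ref{lm:var_para} that separation comes from Clausen's product \eqref{eq:Clausen'}: the denominators $(1+\varepsilon)_k(1-\varepsilon)_k$ have no $O(\varepsilon)$ term, and there is no analogue for a single ${}_2F_1$. Your expression is linear in $P^{-\varepsilon}_{-1/2}$, so the $[\partial_\varepsilon P^{-\varepsilon}_{-1/2}]^2$ term you single out as the obstacle never arises; the real obstruction is the $(\mathsf H_k)^2$ contamination. The $q$-series fallback does not close the gap either. Substituting the expansion of $\lambda(2z)$ into the numerator gives some $q$-series, but you offer no structural reason for it to equal the right-hand side, and comparing finitely many coefficients proves nothing. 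Moreover $\partial_z\log\theta_3$ yields weight-two Lambert series, not $n^{-2}$-type ones, and no $\zeta(3)$ or $G$ occurs in \eqref{eq:LR2sqr} at all.

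What is missing is a direct handle on $F(t)\colonequals\sum_{k\geq0}\binom{2k}k^2\mathsf H_k^{(2)}(t/2^4)^k$. The paper writes $\mathsf H_k^{(2)}-\zeta(2)=\int_0^1\frac{x^k\log x}{1-x}\D x$. Fractional integration by parts then gives $-\frac2\pi\int_0^1\frac{(\arccos\sqrt x)^2\D x}{\sqrt{x(1-x)(1-tx)}}=-\frac4\pi\int_0^{\mathbf K(\sqrt t)}\bigl[\frac\pi2-\am(u|t)\bigr]^2\D u$. Inserting the Fourier series of $\am(u|t)$ produces $\frac{q^k}{1+q^{2k}}=\frac{1}{2\cosh(2k\pi z/i)}$ at once.

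Alternatively, your $q$-series idea can be made rigorous through an ODE. Since $k^2b_k=(k-\frac12)^2b_{k-1}$ for $b_k=\binom{2k}k^2/2^{4k}$ and $\mathsf H_k^{(2)}-\mathsf H_{k-1}^{(2)}=k^{-2}$, one finds $\widehat{\mathscr L}_{-1/2}F=[P_{-1/2}(1-2t)-1]/t$. Write $F=P_{-1/2}(1-2t)\,g(z)$ with $t=\lambda(2z)$, and use \eqref{eq:alpha4'}. The equation becomes
\[g''(z)=-4\pi^2(\theta_3^4\theta_4^4-\theta_3^2\theta_4^4),\]
where $\theta_3(q)=\sum_{n\in\mathbb Z}q^{n^2}$, $\theta_4(q)=\sum_{n\in\mathbb Z}(-1)^nq^{n^2}$ and $q=e^{2\pi iz}$, subject to $g=O(q)$ at $i\infty$. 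The classical expansions
\[\theta_3^4\theta_4^4=1+16\sum_{n\geq1}\frac{(-1)^nn^3q^{2n}}{1-q^{2n}},\qquad \theta_3^2\theta_4^4=1-4\sum_{m\geq0}\frac{(-1)^m(2m+1)^2q^{2m+1}}{1-q^{2m+1}}\]
then match the second derivative of the right-hand side of \eqref{eq:LR2sqr}. The weight-three piece $\theta_3^2\theta_4^4$ is exactly why $\cosh^{-1}$ terms appear. It is also why the weight-four Eichler machinery behind \eqref{eq:LR2} cannot simply be recycled.
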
\begin{proof}The left-hand side of \eqref{eq:LR1sqr} is equal to \begin{align}
-\frac{1}{8P_{-1/2}(1-2\alpha_4(z))}\left.\!\frac{\partial^2 P_\nu(1-2\alpha_4(z))}{\partial\nu^{2}}\right|_{\nu=-1/2},
\end{align}so its right-hand side follows from the proof of  \eqref{eq:LR1}.

It now suffices to prove   \eqref{eq:LR2sqr} for $ z/i>0$, whereupon $t=\alpha_4(z)\in(0,1) $. By differentiation of Euler's beta integral, one can show that \begin{align}
\int_0^1 \frac{x^k\log^{r-1}x}{1-x}\D x=\mathsf H_{k}^{(r)}-\zeta(r)
\end{align}for $ k\in\mathbb Z_{\geq0},r\in\mathbb Z_{>1}$. In particular, we have\begin{align}
\begin{split}
\sum_{k=0}^\infty\binom{2k}k^2\left[\mathsf H_k^{(2)}-\frac{\pi^{2}}{6}\right]\left(\frac{t }{2^4}\right)^k={}&\frac{2}{\pi}\int_0^1\frac{\mathbf K\big(\sqrt{t x}\big)\log x}{1-x}\D x=-\frac{2}{\pi}\int_0^1\frac{\big(\arccos\sqrt{x}\big)^{2}\D x}{\sqrt{x(1-x)(1-t x)}},
\end{split}
\end{align}where the last equality hinged on  fractional integration by parts (cf.\  \cite[(5.2)]{Campbell2023CCG}). With the Jacobi amplitude function $ \am(u|t)$, which  inverts the incomplete elliptic integral of the first kind, in the following sense:\begin{align}
u=\int_0^{\am(u|t)}\frac{\D\theta}{\sqrt{1-t\sin^2\theta}}\in\big[0,\mathbf K\big(\sqrt{t}\big)\big],
\end{align} we may compute\begin{align}
\int_0^1\frac{\big(\arccos\sqrt{x}\big)^{2}\D x}{\sqrt{x(1-x)(1-t x)}}=2\int_0^{\mathbf K(\sqrt{t})}\left[ \frac{\pi}{2} -\am(u|t)\right]^{2}\D u.
\end{align}With $q=e^{-\pi\mathbf K(\sqrt{1-t})/\mathbf K(\sqrt{t})} $, we have the Fourier series \cite[item 908.00]{ByrdFriedman} \begin{align}
\am(u|t)=\frac{\pi u}{2\mathbf K\big(\sqrt{t}\big)}+2\sum_{m=0}^\infty\frac{q^{m+1}}{(m+1)(1+q^{2m+2})}\sin\frac{(m+1)\pi u}{\mathbf K\big(\sqrt{t}\big)}
\end{align}at our disposal,  so a brief calculation reveals that \begin{align}
\int_0^1\frac{\big(\arccos\sqrt{x}\big)^{2}\D x}{\sqrt{x(1-x)(1-t x)}}={}&\mathbf K\big(\sqrt{t}\big)\left[ \frac{\pi^2}{6} -4\sum_{k=1}^\infty\frac{1}{k^2}\frac{q^{k}}{1+q^{2k}}\left( 1-\frac{q^{k}}{1+q^{2k}} \right)\right].
\end{align}Thus, we eventually get\begin{align}
\sum_{k=0}^\infty\binom{2k}k^2\mathsf H_k^{(2)}\left(\frac{t }{2^4}\right)^k=\frac{8\mathbf K\big(\sqrt{t}\big)}{\pi}\sum_{k=1}^\infty\frac{1}{k^2}\frac{q^{k}}{1+q^{2k}}\left( 1-\frac{q^{k}}{1+q^{2k}} \right),
\end{align}as claimed.
\end{proof}\begin{remark}We have \big[see \eqref{eq:EichlerE4'} for the definition of $ \mathscr E'_4$\big]\begin{align}
\sum_{n=0}^\infty\frac{1}{(2n+1)^{2}\cosh^2\frac{(2n+1)\pi z_{}}{i}}={}&\frac{\pi^{2}\big[4\mathscr E'_4\big(z+\frac12\big)-\mathscr E'_4(2z)\big]}{120},\label{eq:EichlerE4dP''}\\\sum_{n=1}^\infty\frac{1}{n^{2}\cosh^2\frac{2n\pi z_{}}{i}}={}&\frac{\pi^{2}\big[4\mathscr E'_4\big(4z\big)-\mathscr E'_4(2z)\big]}{30},
\end{align}but there are no straightforward ways to express the infinite series \begin{align}
\sum_{n=1}^\infty\frac{1}{n^{2}\cosh\frac{2n\pi z_{}}{i}}
\label{eq:unres}\end{align}through the values of the Eichler integrals (and/or their derivatives) at special points. In fact, the last displayed formula does not fit into an ``automorphic'' functional equation like \eqref{eq:EichlerE4refl}---a close analog  would be an entry in Ramanujan's second notebook \cite[p.~277]{RN2}:\begin{align}
\sum_{n=1}^\infty\frac{1}{n^{2}\cosh\frac{2n\pi z_{}}{i}}=\frac{\pi^{2}\big(1-6z^2\big)}{6}-\frac{8zG}{i}-\frac{16z}{i}\sum_{n=0}^\infty\frac{(-1)^{n}}{(2n+1)^{2}\Big[ e^{\frac{(2n+1)\pi i_{}}{2z}} -1\Big]}.\label{eq:RN2p277}
\end{align} Such an infinite series can be expressed through elliptic logarithms [see \eqref{eq:RN2p277'} below], yet does not resolve into any Eichler-type integrals (so far as we understand). Indeed, we do not know how to express a special value (cf.\ \cite[(2.34)]{Zhou2025NotesBHS})\begin{align}
\frac{\displaystyle\sum_{k=0}^\infty \binom{2k}k^2\frac{\mathsf H_k^{(2)}}{2^{5k}}}{\displaystyle\sum_{k=0}^\infty \binom{2k}k^2\frac{1}{2^{5k}}}=\frac{\pi^{2}}{6}-\frac{2}{9} {_4F_3}\left(\left.\begin{array}{@{}c@{}}
1,1,1,\frac{3}{2} \\[4pt]
\frac{7}{4},\frac{7}{4},2 \\
\end{array}\right| 1\right)-\frac{16\pi^{2}}{\big[\Gamma\big(\frac14\big)\big]^4}{_4F_3}\left(\left.\begin{array}{@{}c@{}}
\frac{1}{2},\frac{1}{2},1,1 \\[4pt]
\frac{5}{4},\frac{5}{4},\frac{3}{2} \\
\end{array}\right| 1\right)
\end{align}
in terms of familiar mathematical constants.\eor\end{remark}

While performing modular parametrizations for infinite series whose summands are (inversely) proportional to $ \binom{2k}{k}^2$, one often ends up with  ``unresolved'' series like \eqref{eq:unres}. This may occur even without the participation of harmonic numbers, such as (cf.\ \cite[item 908.02]{ByrdFriedman}) \begin{align}
\begin{split}\sum_{k=1}^\infty\frac{(2^4 t)^k}{k^{2}\binom{2k}k^2}={}&\int_{0}^1\frac{2\sqrt{t}\arcsin\sqrt{tx}}{\sqrt{x(1-x)(1-tx)}}\D x=2\sqrt{t}\int_0^{2\mathbf K(\sqrt{t})}\arcsin\big(\sqrt{t}\sin\am(u|t)\big)\D u\\={}&\frac{2^{5}\sqrt{t}\mathbf K\big(\sqrt{t}\big)}{\pi}\sum_{n=0}^\infty\frac{1}{(2n+1)^{2}}\frac{q^{n+\frac{1}{2}}}{1+q^{2n+1}}
\end{split}\label{eq:inv_sqr}
\end{align}for  $t\in(0,1)$ and $ q=e^{-\pi\mathbf K(\sqrt{1-t})/\mathbf K(\sqrt{t})}$. By contrast, the Eichler integral representations in Theorems \ref{thm:H2} and \ref{thm:H3} are rare occurrences.
\subsection{Infinite series involving $ \binom{2k}{k}^3$ and harmonic numbers of orders up to $3$\label{eq:Hmix}}In this subsection, we record variants of Lemma \ref{lm:H3a} and Lemma \ref{lm:H3b}, in which the summands involve harmonic numbers    $ \mathsf H_k^{(r)}\colonequals \sum_{0<n\leq k}\frac{1}{n^{r}}$ of orders $r\in\{1,2,3\}$. By convention, we will  abbreviate $\mathsf H_k^{}\equiv\mathsf H_k^{(1)}$. \begin{proposition}For  $ |4t(1-t)|\leq1$ and $ \RE t\leq\frac12$, we have \begin{align}
\begin{split}&32\sum_{k=0}^\infty\binom{2k}{k}^3\left\{ \mathsf H_{2k}^{(3)}-\frac{1}{8}\mathsf H_{k}^{(3)} -\frac{3}{2}\left[\mathsf H_{2k}^{(2)}-\frac{1}{4}\mathsf H_{k}^{(2)} \right]\left(\mathsf H_{2k} ^{}-\mathsf H_k^{}\right)\right\}\left[ \frac{t(1-t)}{2^4} \right]^{k}\\={}&28\zeta(3)[P_{-1/2}(1-2t)]^2-\pi P_{-1/2}(2t-1)\left.\!\frac{\partial^2 P_{\nu}(1-2t)}{\partial\nu^2} \right|_{\nu=-1/2}\\{}&- \pi P_{-1/2}(1-2t)\left.\!\frac{\partial^2 P_{\nu}(2t-1)}{\partial\nu^2} \right|_{\nu=-1/2}\\{}&-P_{-1/2}(1-2t)\left[ \pi^{3}P_{-1/2}(2t-1)+2\left.\!\frac{\partial^2 P_{\nu}(1-2t)}{\partial\nu^2} \right|_{\nu=-1/2}\log\frac{t(1-t)}{2^4}\right].\end{split}\label{eq:H3mix1}
\end{align}\end{proposition}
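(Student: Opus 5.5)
The plan is to avoid a closed form for the left-hand side of \eqref{eq:H3mix1}. Instead I would show that both sides of \eqref{eq:H3mix1} satisfy the same inhomogeneous Appell--Legendre equation $\widehat{\mathscr A}f=\text{(source)}$ and then match the homogeneous part by the behaviour as $t\to0^+$, as in the proofs of Lemmas \ref{lm:H3a} and \ref{lm:H3b}. The only external inputs will be Lemma \ref{lm:var_para}, Lemma \ref{lm:H3a}, the inhomogeneous Legendre equation $\widehat{\mathscr L}_{-1/2}\,\partial_\nu^2P_\nu(1-2t)|_{\nu=-1/2}=-2P_{-1/2}(1-2t)$, and the fact that $\widehat{\mathscr A}$ annihilates every product of two solutions of $\widehat{\mathscr L}_{-1/2}$.

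\emph{Step 1 (hypergeometric deformation for the left-hand side).} Write $\Delta_k^{(r)}\colonequals\mathsf H_{2k}^{(r)}-2^{-r}\mathsf H_k^{(r)}$, so that $\sum_{j<k}(j+\frac12)^{-r}=2^r\Delta_k^{(r)}$. Term $(1/2+\varepsilon)_k/k!$ carries the log-derivative $\sum_{j<k}(j+\frac12+\varepsilon)^{-1}$. Term $(-\nu)_k(1+\nu)_k$ at $\nu=-\frac12+\delta$ contributes an even function of $\delta$, whose $\delta^2$-coefficient is $-4\Delta_k^{(2)}$ (this is the first equality of \eqref{eq:nu''}). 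The cubic combination $\Delta^{(3)}_k-\frac32\Delta^{(2)}_k(\mathsf H_{2k}-\mathsf H_k)$ should therefore come from a mixed derivative $\partial_\nu^2\partial_\varepsilon$ of a two-parameter family such as ${_3F_2}\big(\begin{smallmatrix}-\nu,1+\nu,\frac12+\varepsilon\\1,1+\varepsilon\end{smallmatrix}\big|4t(1-t)\big)$. The $\partial_\varepsilon$ factor supplies $2(\mathsf H_{2k}-\mathsf H_k)$, and because $\nu$ and $\varepsilon$ enter the logarithm additively with $\partial_\nu$ vanishing at $\nu=-\frac12$, the mixed derivative is exactly the product term. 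This is then combined with the pure $\Delta^{(3)}_k$ series of Lemma \ref{lm:H3a}. Termwise differentiation fixes the rational prefactors; I would expect something like $32\,\text{LHS}=-\frac13\partial_\varepsilon^3[\cdots]_{\text{Lemma \ref{lm:H3a}}}+c\,\partial_\nu^2\partial_\varepsilon[\cdots]$ with an explicit constant $c$.

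\emph{Step 2 (differential equation and source terms).} Differentiate the parametric differential equation of the family with respect to the parameters, as in \eqref{eq:AL1}, to get $\widehat{\mathscr A}(\text{LHS})=$ an explicit source. That source is built from $(1-2t)[P_{-1/2}(1-2t)]^2$, $P_{-1/2}(1-2t)\,\partial_\nu^2P_\nu(1-2t)|_{\nu=-1/2}$ and their $t$-derivatives. For the right-hand side, apply $\widehat{\mathscr A}$ term by term:
\begin{itemize}
\item $28\zeta(3)[P_{-1/2}(1-2t)]^2$ and $\pi^3P_{-1/2}(1-2t)P_{-1/2}(2t-1)$ are killed, being products of two Legendre solutions.
\item The products $P\cdot\partial_\nu^2P$ are handled by the Leibniz-type rule for $\widehat{\mathscr A}$ on products $uv$, together with $\widehat{\mathscr L}_{-1/2}\partial_\nu^2P=-2P$; this leaves only terms linear in the inhomogeneity.
\item The logarithmic term $\log\frac{t(1-t)}{2^4}$ produces the extra contributions with $(1-2t)/[t(1-t)]$, which must cancel against the $\partial_\varepsilon$-generated source of Step 1.
\end{itemize}

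\emph{Step 3 (boundary matching).} The difference of the two sides lies in the span of $[\mathbf K(\sqrt t)]^2$, $\mathbf K(\sqrt t)\mathbf K(\sqrt{1-t})$ and $[\mathbf K(\sqrt{1-t})]^2$. I would expand the right-hand side as $t\to0^+$ using $P_{-1/2}(2t-1)=\frac1\pi\log\frac{16}{t}+O(t\log t)$ and the logarithmic expansion of $\partial_\nu^2P_\nu(2t-1)|_{\nu=-1/2}$, obtained from the integral in the proof of Lemma \ref{lm:var_para} after $t\mapsto1-t$. The aim is to check that the $\log^2t$, $\log t$ and constant terms cancel, so the right-hand side is $O(t)$ like the left-hand side; this forces the three coefficients to vanish. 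The constant $28\zeta(3)$ is precisely what kills the constant term, via the value of $\partial_\nu^2P_\nu(-1)$-type asymptotics, which equal $\zeta(3)$-multiples as in \eqref{eq:'eps''}.

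The main obstacle is Step 2, specifically the $\widehat{\mathscr A}$ computation on the logarithmic term. Those contributions must be checked to cancel the $\varepsilon$-generated source exactly, and this is where the coefficient $-\frac32$ is forced. If that bookkeeping becomes unwieldy, I would instead reparametrize by $t=\alpha_4(z)$ using \eqref{eq:zKratio} and compare $q$-expansions, as in the proof of Theorem \ref{thm:H2}.
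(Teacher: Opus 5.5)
Your plan is sound and would prove the statement once Step 2 is carried out, but it follows a different and much heavier route than the paper's. Your Step 1 is right, with $c=3$.

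\textbf{The paper's route.} The paper solves no differential equation here. It starts from the two-parameter Clausen-type product formula
\[ {_3F_2}\left(\left.\begin{array}{@{}c@{}}-\nu+\varepsilon,1+\nu+\varepsilon,\frac12+\varepsilon\\ 1+\varepsilon,1+2\varepsilon\end{array}\right|4t(1-t)\right)=\frac{[\Gamma(1+\varepsilon)P^{-\varepsilon}_\nu(1-2t)]^{2}}{[t(1-t)]^{\varepsilon}} \]
and applies $\partial_\nu^2\partial_\varepsilon$ at $(\nu,\varepsilon)=(-\frac12,0)$ to both sides.

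Here $\varepsilon$ sits inside the $\nu$-dependent Pochhammer symbols. So the $\delta^2$-coefficient $-\sum_{j<k}(j+\frac12+\varepsilon)^{-2}$ of the logarithm acquires an $\varepsilon$-slope $16\Delta^{(3)}_k$. This adds to your product term $-4\Delta^{(2)}_k\cdot 6(\mathsf H_{2k}-\mathsf H_k)$, and this one family yields the entire left-hand side, with the factor $32$ and the coefficient $-\frac32$ built in. The right-hand side then follows from the Leibniz rule and known formulas for $\partial_\varepsilon P^{-\varepsilon}_\nu(1-2t)|_{\varepsilon=0}$ and its $\nu$-derivatives. No constants of integration need to be fixed.

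\textbf{What your route buys.} It stays inside the toolkit of Lemmas \ref{lm:H3a}--\ref{lm:H3b}, and it exhibits the right-hand side as a solution of an inhomogeneous symmetric-square equation. Your Step 3 does close. As $t\to0^+$,
\[ \partial_\nu^2P_\nu(2t-1)|_{\nu=-1/2}=\pi\log\tfrac{t}{16}+\tfrac{28\zeta(3)}{\pi}+O(t\log t). \]
Hence $-\pi P_{-1/2}(1-2t)\,\partial_\nu^2P_\nu(2t-1)|_{\nu=-1/2}$ cancels the constant and $\log t$ contributions of $28\zeta(3)[P_{-1/2}(1-2t)]^2$ and $-\pi^3P_{-1/2}(1-2t)P_{-1/2}(2t-1)$. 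Both sides in fact begin with $4t$.

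\textbf{What Step 2 still needs.} Step 2 is unexecuted, and its inputs are not the ones you list. The ${_3F_2}$ operator of your family depends on $\nu$ through $\nu(1+\nu)$. So $\partial_\nu^2$ of its coefficients acts on the $\varepsilon$-derivative of your family at $\nu=-\frac12$, and the source requires
\[ \sum_{k=0}^\infty\binom{2k}k^3(\mathsf H_{2k}-\mathsf H_k)\left[\frac{t(1-t)}{2^4}\right]^k=-\frac16\left\{[P_{-1/2}(1-2t)]^2\log\frac{t(1-t)}{2^4}+\pi P_{-1/2}(1-2t)P_{-1/2}(2t-1)\right\}. \]
This is most naturally obtained from one $\varepsilon$-derivative of the product formula above. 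Once you have that formula, differentiating it once more is the shortest proof.

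You will also need three further facts:
\begin{itemize}
\item $\int_0^1[\mathbf K(\sqrt s)]^2\D s=\frac72\zeta(3)$ and $\int_0^1\mathbf K(\sqrt s)\mathbf K(\sqrt{1-s})\D s=\frac{\pi^3}{8}$, to extract the $\zeta(3)$ constant in Step 3 from the integral representation;
\item the normalization that, at $(\nu,\varepsilon)=(-\frac12,0)$, the operator $\vartheta^3-x(\vartheta+\frac12)^3$ with $\vartheta=x\frac{\D}{\D x}$ and $x=4t(1-t)$ equals $\frac{t(1-t)}{1-2t}\widehat{\mathscr A}$.
\end{itemize}
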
\begin{proof}Alongside   \eqref{eq:Clausen'}, we have an  identity of Clausen's type  (cf.\ \cite[(3.19)--(3.20)]{Zhou2025NotesBHS})\footnote{Both \eqref{eq:Clausen'} and \eqref{eq:Clausen}
apply to $ |4t(1-t)|\leq1$ and $ \RE t\leq\frac12$. } \begin{align}
{_3}F_2\left(\left. \begin{array}{@{}c@{}}
-\nu+\varepsilon,1+\nu+\varepsilon,\frac{1}{2}+\varepsilon\ \\
1+\varepsilon,1+2\varepsilon\ \\
\end{array} \right| 4t(1-t)\right)=\frac{[\Gamma(1+\varepsilon)P^{-\varepsilon}_\nu(1-2t)]^{2}}{[t(1-t)]^{\varepsilon}}.\label{eq:Clausen}
\end{align}For  $ |4t(1-t)|\leq1$, we may turn \begin{align}\left.\!\frac{\partial^3}{\partial\nu ^2\partial\varepsilon}{_3F_2}\left(\left. \begin{array}{@{}c@{}}
-\nu+\varepsilon,1+\nu+\varepsilon,\frac{1}{2}+\varepsilon\ \\
1+\varepsilon,1+2\varepsilon\ \\
\end{array} \right| 4t(1-t)\right)\right|_{\raisebox{.3em}{\ensuremath{\substack{\nu=-1/2\\\varepsilon=0\hfill}}}}
\end{align}into the left-hand side of \eqref{eq:H3mix1} by termwise differentiation. Meanwhile, evaluating\begin{align}
\left.\!\frac{\partial^3}{\partial\nu ^2\partial\varepsilon}\frac{[\Gamma(1+\varepsilon)P^{-\varepsilon}_\nu(1-2t)]^{2}}{[t(1-t)]^{\varepsilon}}\right|_{\raisebox{.3em}{\ensuremath{\substack{\nu=-1/2\\\varepsilon=0\hfill}}}}\end{align} with the aid of \cite[Lemma 2.2]{Zhou2025NotesBHS}, we reach  the  right-hand side of \eqref{eq:H3mix1}.
\end{proof}\begin{remark} The identity\begin{align}
\left.\!\frac{1}{P_{-1/2}(1-2\alpha_{4}(z))}\frac{\partial^2 P_{\nu}(1-2\alpha_{4}(z))}{\partial\nu^2} \right|_{\nu=-1/2}=\frac{\pi^{2}\big[4\mathscr E'_4\big(z+\frac12\big)-\mathscr E'_4(2z)\big]}{15}\tag{\ref{eq:EichlerE4dP''}$'$}
\end{align}  may help us deduce a modular parametrization for \eqref{eq:H3mix1}, but this does not lead to closed-form evaluations (in terms of well-known mathematical constants), due to lack of ``magic cancellations'' similar to those encountered in   \S\ref{sec:H2} and \S\ref{sec:H3}.
\eor\end{remark}

\begin{proposition}For  $ |4t(1-t)|\leq1$, $ \RE t\leq\frac12$, and $ \I\frac{1}{t(1-t)}\neq0$, we have \begin{align}\footnotesize
\begin{split}&4\sum_{k=0}^\infty\binom{2k}{k}^3\left[ \mathsf H_{k}^{(3)} -3\mathsf H_{k}^{(2)} \left(\mathsf H_{2k} ^{}-\mathsf H_k^{}\right)\right]\left[ \frac{t(1-t)}{2^4} \right]^{k}\\={}&-\left(\frac{2}{\pi}\right)^2\int_t^\infty\frac{4(1-2s)}{s(1-s)}\big[\mathbf K\big(\sqrt{\mathstrut 1-s}\big)\mathbf K\big(\sqrt{\mathstrut t}\big)-\mathbf K\big(\sqrt{\mathstrut s}\big)\mathbf K\big(\sqrt{\mathstrut 1-t}\big)\big]^{2}\D s\\&{}+\frac{\sqrt{-T}P_{-1/2}\big(\sqrt{1-T}\big) P_{-1/2}\big(\!-\!\sqrt{1-T}\big)}{3}\left\{ \pi ^{3}-\frac{i\I T}{|\I T|}\left[\pi^2\log(-64T)-12\zeta(3)\right] \right\}\\&{}+\frac{2\sqrt{-T}}{3}\left\{ \left[P_{-1/2}\big(\sqrt{1-T}\big)\right] ^{2}-\left[ P_{-1/2}\big(\!-\!\sqrt{1-T}\big) \right]^2\right\}\left[\pi^2\log(-64T)-3\zeta(3)\right]\\&{}-\frac{2\pi ^{3}\sqrt{-T}}{3}\frac{i\I T}{|\I T|}\left[P_{-1/2}\big(\sqrt{1-T}\big)\right]^{2}\\&{}+\sqrt{-T}\left\{ P_{-1/2}\big(\sqrt{1-T}\big) \left[ \pi-\frac{i\I T}{|\I T|} \log(-64T)\right] -P_{-1/2}\big(\!-\!\sqrt{1-T}\big)\log(-64T)\right\}\left.\!\frac{\partial^2 P_{\nu}\big(\!-\!\sqrt{1-T}\big)}{\partial\nu^2} \right|_{\nu=-1/2}\\&{}+\sqrt{-T}\left\{ P_{-1/2}\big(\!-\!\sqrt{1-T}\big) \left[ \pi-\frac{i\I T}{|\I T|} \log(-64T)\right] \right.\\&{}\left.{}+P_{-1/2}\big(\sqrt{1-T}\big)\left[\log(-64T)+\frac{2\pi i\I T}{|\I T|}\right]\right\}\left.\!\frac{\partial^2 P_{\nu}\big(\sqrt{1-T}\big)}{\partial\nu^2} \right|_{\nu=-1/2},\end{split}\label{eq:H3mix2}
\end{align}where $ T=\frac{1}{4t(1-t)}$.
\end{proposition}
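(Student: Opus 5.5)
Differentiate a parametrized $_4F_3$ (or the Clausen-type identity \eqref{eq:Clausen}) once in $\varepsilon$ and twice in $\nu$ at $\nu=-\frac12$, $\varepsilon=0$, so that the mixed harmonic combination $\mathsf H_k^{(3)}-3\mathsf H_k^{(2)}(\mathsf H_{2k}-\mathsf H_k)$ appears, and then evaluate the same derivative on an analytic side built from the Guillera--Rogers residue identity in the proof of Lemma~\ref{lm:H3b}. Concretely, I would start from
\[
{_4F_3}\left(\left.\begin{array}{@{}c@{}}-\nu+\varepsilon,1+\nu+\varepsilon,\frac12+\varepsilon,1\\1+\varepsilon,1+\varepsilon,1+2\varepsilon\end{array}\right|T^{-1}\right)
\]
or an equivalent deformation whose $\nu$--$\varepsilon$ derivatives reproduce the left-hand side termwise. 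The first step is to check by termwise differentiation that the mixed third derivative at $(\nu,\varepsilon)=(-\frac12,0)$ yields exactly $4\sum\binom{2k}{k}^3[\mathsf H_k^{(3)}-3\mathsf H_k^{(2)}(\mathsf H_{2k}-\mathsf H_k)][t(1-t)/2^4]^k$. I expect the factor $\mathsf H_{2k}-\mathsf H_k$ to come from $\partial_\varepsilon$ of $(\frac12+\varepsilon)_k/(1+2\varepsilon)_k$, and the $\mathsf H^{(2)},\mathsf H^{(3)}$ parts from the $\nu$-derivatives of $(-\nu)_k(1+\nu)_k$.

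Next, I would run the Mellin--Barnes residue argument of Lemma~\ref{lm:H3b} with $\nu$ kept generic. This expresses the $_4F_3$ at argument $T^{-1}=4t(1-t)$ as a companion $_4F_3$ at argument $T$ plus three $_3F_2$ pieces analogous to $f_{\nu,\varepsilon},g_{\nu,\varepsilon},h_{\nu,\varepsilon}$. Each of these $_3F_2$ pieces should be Clausen-factorizable, via \eqref{eq:Clausen'} and \eqref{eq:Clausen} after a quadratic change of variable, into products of Legendre functions $P^{\pm\varepsilon}_\nu(\pm\sqrt{1-T})$ times powers $(-T)^{\nu}$, $(-T)^{-1-\nu}$ and $\sqrt{-T}$. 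The companion $_4F_3$ at argument $T$ is the term that, once differentiated three times, gives the integral $\int_t^\infty$. I would identify it through \eqref{eq:4F3Rama} (Proposition 4.8 of \cite{Zhou2025NotesBHS}): after the $\varepsilon$- and $\nu$-derivatives it should reduce to ${_4F_3}(1,1,1,1;\frac32,\frac32,\frac32|T)$ times a constant, and \eqref{eq:4F3Rama} converts that into the displayed integral.

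The remaining work is to differentiate the Clausen products with Lemma 2.2 of \cite{Zhou2025NotesBHS}. In those products, $\partial_\varepsilon P^{\pm\varepsilon}_{-1/2}$ is supplied by \eqref{eq:eps'}, $\partial_\nu^2$ produces the $\partial^2P_\nu/\partial\nu^2$ terms, and differentiating the powers of $-T$ produces the $\log(-64T)$ factors. The Gamma-factor expansions produce $\zeta(3)$, $\pi^2$ and $\pi^3$. All Euler--Mascheroni constants and $\log 2$ terms must cancel, and their cancellation is the check that the normalization is right. Finally, the branch choices for $(-T)^{\nu}$ and $\sqrt{1-T}$ on either side of the real axis give the sign factor $i\I T/|\I T|$, which is why the hypothesis $\I\frac{1}{t(1-t)}\neq0$ is imposed.

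I expect the main obstacle to be this bookkeeping. The residue identity has to be set up with the right Gamma weights so that the $\varepsilon$-derivative of order one, rather than three, already kills the $_4F_3(\cdot|T)$ remainder at order $\varepsilon^0$ and $\varepsilon^1$. The branch-dependent terms also have to be tracked consistently. As a fallback, I would instead derive an inhomogeneous Appell--Legendre equation $\widehat{\mathscr A}y=\cdots$ for the left-hand side, check that the right-hand side satisfies the same equation, and fix the three free constants by the $O(t)$ behaviour as $t\to0$, as in Lemma~\ref{lm:H3a}.
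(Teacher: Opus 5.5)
There is a genuine gap at the first step: the deformation you propose does not produce the left-hand side of \eqref{eq:H3mix2}. In your ${}_4F_3$ the $\nu$-dependence sits in the upper parameters $(-\nu+\varepsilon)_k(1+\nu+\varepsilon)_k=(\frac12-\delta+\varepsilon)_k(\frac12+\delta+\varepsilon)_k$, where $\delta=\nu+\frac12$. Their $\delta$-derivatives at $\delta=\varepsilon=0$ produce
\[
\sum_{0\le j<k}\Bigl(j+\frac12\Bigr)^{-r}=2^r\bigl[\mathsf H^{(r)}_{2k}-2^{-r}\mathsf H^{(r)}_k\bigr],
\]
and never $\mathsf H^{(r)}_k$. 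Your termwise check would therefore return the summand factor
\[
32\bigl[\mathsf H^{(3)}_{2k}-\tfrac18\mathsf H^{(3)}_k\bigr]-8\bigl[\mathsf H^{(2)}_{2k}-\tfrac14\mathsf H^{(2)}_k\bigr](6\mathsf H_{2k}-7\mathsf H_k).
\]
This is a relative of \eqref{eq:H3mix1}, and the same happens if you differentiate \eqref{eq:Clausen} instead. It is not $4\bigl[\mathsf H^{(3)}_k-3\mathsf H^{(2)}_k(\mathsf H_{2k}-\mathsf H_k)\bigr]$.

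Worse, expanding at $\varepsilon=0$ cannot generate the integral term at all. In the Guillera--Rogers relation for your series, the companion ${}_4F_3(\cdot|T)$ carries the factor $\prod_j(1-b_j)=-2\varepsilon^3$. Its cofactor is nonzero at $(\nu,\varepsilon)=(-\frac12,0)$, so $\partial_\varepsilon\partial_\nu^2$ at $\varepsilon=0$ annihilates the companion. Your ``obstacle'' paragraph, which aims to kill that remainder, thus contradicts your plan to extract $\int_t^\infty$ from it via \eqref{eq:4F3Rama}.

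The missing idea is to put the series of interest on the reciprocal side of \eqref{eq:GRrecip} and to expand at the finite value $\varepsilon=\frac12$, outside the infinitesimal regime of Guillera--Rogers. The paper applies $\partial^3/\partial\nu^2\partial\varepsilon$ at $(\nu,\varepsilon)=(-\frac12,\frac12)$ to
\[
{}_4F_3\Bigl(1,1-\varepsilon,1-\varepsilon,1-\varepsilon;\tfrac32-\varepsilon,1-\nu-\varepsilon,2+\nu-\varepsilon\,\Big|\,4t(1-t)\Bigr).
\]
With $\eta=\varepsilon-\frac12$, the lower parameters become $1\mp\delta-\eta$. These are what produce $\mathsf H^{(2)}_k$ and $\mathsf H^{(3)}_k$, and the logarithmic $\eta$-derivative of the summand is $-6(\mathsf H_{2k}-\mathsf H_k)$. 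Together they give exactly the left-hand side.

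On the other side, the prefactor $(1-2\varepsilon)(\nu+\varepsilon)(1+\nu-\varepsilon)=-2\eta(\delta^2-\eta^2)$ has a triple zero that the third-order derivative exactly consumes. The companion therefore contributes only
\[
16T\,{}_4F_3\bigl(1,1,1,1;\tfrac32,\tfrac32,\tfrac32\,\big|\,T\bigr)=\frac{4}{t(1-t)}\,{}_4F_3\bigl(1,1,1,1;\tfrac32,\tfrac32,\tfrac32\,\big|\,T\bigr),
\]
which \eqref{eq:4F3Rama} turns into the integral. The remaining terms of \eqref{eq:GRrecip} involve only the order-zero Legendre functions $P_\nu(\sqrt{1-T})$ and $P_\nu(-\sqrt{1-T})$, with explicit Gamma/trigonometric prefactors and the power $(-T)^{1-\varepsilon}$. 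Differentiating these at $\varepsilon=\frac12$ yields the $\log(-64T)$, $\zeta(3)$, $\pi^3$ and $\partial_\nu^2P_\nu$ terms, so $\partial_\varepsilon P^{\pm\varepsilon}_\nu$ and \eqref{eq:eps'} never enter.

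Your Appell--Legendre fallback is too unspecific to close the gap as written.
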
\begin{proof}For $\nu\in(-1,0)$,  $ \varepsilon\in(0,1)$, $|T|=1 $ and $\I T\neq0$, we have  the following functional equation (cf.\ \cite[Lemma 1]{GuilleraRogers2014}):\begin{align}\begin{split}&
{_4F_3}\left(\left. \begin{array}{@{}c@{}}
1, 1-\varepsilon,1-\varepsilon,1-\varepsilon \\
\frac{3}{2}-\varepsilon,1-\nu-\varepsilon,2+\nu-\varepsilon \\
\end{array} \right| \frac{1}{T}\right){}\\={}&-\frac{(1-2\varepsilon ) (\nu +\varepsilon) (1+\nu -\varepsilon )T }{2\varepsilon ^3}{_4F_3}\left(\left. \begin{array}{@{}c@{}}
1, -\nu+\varepsilon,1+\nu+\varepsilon,\frac{1}{2}+\varepsilon \\
1+\varepsilon,1+\varepsilon,1+\varepsilon \\
\end{array} \right| T\right)\\&{}+\frac{ (-T)^{1-\varepsilon }  \pi \sqrt{\pi }\Gamma \big(\frac{3}{2}-\varepsilon \big) \Gamma (1-\nu-\varepsilon) \Gamma (2+\nu-\varepsilon)}{2 [\Gamma (1-\varepsilon )]^3\sin (\varepsilon \pi  )}\left\{ \left[\frac{1}{\sin(\nu\pi)}-\frac{2\sin (\nu \pi  )}{\tan^2(\varepsilon\pi)}\right. \right.\\{}&\left.{}+\frac{2i\sin (\nu \pi  )}{\tan(\varepsilon\pi)}\frac{\I T}{|\I T|}\right]\big[P_{\nu}\big(\sqrt{1-T}\big)\big]^{2}-\frac{\big[P_{\nu}\big(\!-\!\sqrt{1-T}\big)\big]^{2}}{\sin (\nu \pi  )}
\\{}&\left.{}-2\left[\frac{ 1}{\tan(\varepsilon\pi)} -\frac{i\I T}{|\I T|}\right]P_{\nu}\big(\sqrt{1-T}\big)P_{\nu}\big(\!-\!\sqrt{1-T}\big)\right\}.\end{split}\label{eq:GRrecip}\end{align}
Here, while counting the residues of  \begin{align}
\frac{ \Gamma \big(\frac{3}{2}-\varepsilon \big) [\Gamma (1-s-\varepsilon )]^3 \Gamma (1 -\nu-\varepsilon ) \Gamma (2 +\nu -\varepsilon)}{[\Gamma (1-\varepsilon )]^3 \Gamma \big(\frac{3}{2}-s-\varepsilon \big) \Gamma (1-s-\nu-\varepsilon  ) \Gamma (2-s +\nu -\varepsilon)}\frac{\pi(-T)^{s}}{\sin(s\pi)}
\end{align} in the complex $s$-plane, we have expanded \eqref{eq:Clausen'} and \eqref{eq:Clausen} up to $O(\varepsilon^2)$ terms, effectively expressing certain  series (cf.\ \cite[(3.16)--(3.17)]{Zhou2025NotesBHS}) in terms of $ \big[P_{\nu}\big(\sqrt{1-T}\big)\big]^{2}$,  $P_{\nu}\big(\sqrt{1-T}\big)P_{\nu}\big(\!-\!\sqrt{1-T}\big)$ and $\big[P_{\nu}\big(\!-\!\sqrt{1-T}\big)\big]^{2} $.

In the original setting of \cite[Lemma 1]{GuilleraRogers2014}, the parameter $ \varepsilon$ was intended as an infinitesimal, but this does not stop us from considering\begin{align}
\left.\!\frac{\partial^3}{\partial\nu ^2\partial\varepsilon}{_4F_3}\left(\left. \begin{array}{@{}c@{}}
1, 1-\varepsilon,1-\varepsilon,1-\varepsilon \\
\frac{3}{2}-\varepsilon,1-\nu-\varepsilon,2+\nu-\varepsilon \\
\end{array} \right| 4t(1-t)\right)\right|_{\raisebox{.3em}{\ensuremath{\substack{\nu=-1/2\\\varepsilon=1/2\hfill}}}},
\end{align} which is equal to the left-hand side of \eqref{eq:H3mix2}. Now we apply the same differential operator to the right-hand side of  \eqref{eq:GRrecip} for $ |4t(1-t)|=1$. First, note that  [cf.\ \eqref{eq:4F3Rama}]\begin{align}\begin{split}&
\left.\!\frac{\partial^3}{\partial\nu ^2\partial\varepsilon}\left[-\frac{(1-2\varepsilon ) (\nu +\varepsilon) (1+\nu -\varepsilon )T }{2\varepsilon ^3}{_4F_3}\left(\left. \begin{array}{@{}c@{}}
1, -\nu+\varepsilon,1+\nu+\varepsilon,\frac{1}{2}+\varepsilon \\
1+\varepsilon,1+\varepsilon,1+\varepsilon \\
\end{array} \right| T\right)\right]\right|_{\raisebox{.3em}{\ensuremath{\substack{\nu=-1/2\\\varepsilon=1/2\hfill}}}}\\={}&-\left(\frac{2}{\pi}\right)^2\int_t^\infty\frac{4(1-2s)}{s(1-s)}\big[\mathbf K\big(\sqrt{\mathstrut 1-s}\big)\mathbf K\big(\sqrt{\mathstrut t}\big)-\mathbf K\big(\sqrt{\mathstrut s}\big)\mathbf K\big(\sqrt{\mathstrut 1-t}\big)\big]^{2}\D s
\end{split}
\end{align}enters the right-hand side of \eqref{eq:H3mix2}. Next, the third-order derivative of the  remaining terms on the right-hand side of  \eqref{eq:GRrecip} are responsible for all the expressions involving $ T=\frac{1}{4t(1-t)}$.

Although the computations above depend on a constraint  $ |4t(1-t)|=1$, the generic form of   \eqref{eq:H3mix2} follows readily from analytic continuation.          \end{proof}\begin{remark}One may introduce modular parametrizations to   \eqref{eq:GRrecip}, while observing that (cf.\ \cite[pp.\ 38--40]{ByrdFriedman})\begin{align}
2^{4}\alpha_{4}(z)[1-\alpha_4(z)]\alpha_4\left( \frac{1}{2} \frac{2z-1}{2z+1}\right)\left[ 1-\alpha_4\left( \frac{1}{2} \frac{2z-1}{2z+1}\right) \right]={}&1.
\end{align}The main lesson here is the following: even when the summands of our series are not as fine-tuned as those in Theorem \ref{thm:H2} or Theorem  \ref{thm:H3}, there may still be modularity lurking somewhere, though perhaps in a very sophisticated manner.
\eor\end{remark}
\subsection{A perspective on binomial harmonic sums}In recent years, the numerical experiments of the first-named author \cite{Sun2024binomI,Sun2022,Sun2026binomII} have led to many conjectural closed forms for  \textit{binomial harmonic sums}  in  the form of \begin{align}
\sum_{k=1}^\infty\frac{\binom{2k}k^{\mathscr N}}{k^s(2k-1)^{s'}}\left( \frac{T}{2^{2\mathscr N}} \right)^{k}\left[ \prod_{j=1}^M\mathsf H_{k}^{(r_j)} \right]\left[\prod_{\vphantom{j}\smash[b]{j'}=1}^{M'}{{\mathsf H}}_{\smash[t]{2}k}^{( {r}'_{j'})}\right],\label{eq:binomH'}
\end{align} where $\mathscr N,s,s'\in\mathbb Z
$, $ M,M'\in\mathbb Z_{\geq0}$, and $ |T|<1$.

The  $\mathscr N\in\{-1,0,1\}$  cases are pertinent to  $ \varepsilon$-expansions [namely, hypergeometric deformations of the   $ _pF_q$ series defined in  \eqref{eq:defn_pFq}] of quantum electrodynamics and quantum chromodynamics \cite{Ablinger2017,Ablinger2022,ABRS2014,ABS2011,ABS2013,BorweinBroadhurstKamnitzer2001,Broadhurst1996,Broadhurst1999,DavydychevKalmykov2001,DavydychevKalmykov2004,KalmykovVeretin2000,Kalmykov2007,Weinzierl2004bn}.
In  \cite[\S3]{Zhou2022mkMpl},  binomial harmonic sums for  $\mathscr N\in\{-1,0,1\}$ are converted into linear combinations of contour integrals \begin{align}
\int_C \Li_{a_1,\dots,a_n}(z_1(t),\dots,z_n(t))\frac{\D t}{R(t)},
\label{eq:MPL_int_repn}\end{align}before their closed forms are found. Here in the integrand of \eqref{eq:MPL_int_repn}, we have rational functions   $ z_1(t),\dots,z_n(t)$, and $R(t)$,   while  Goncharov's multiple polylogarithms (MPLs) \cite{Goncharov1997,Goncharov1998} are analytic continuations of the convergent series\begin{align}
\Li_{a_1,\dots,a_n}(z_1,\dots,z_n)\colonequals \sum_{\ell_{1}>\dots>\ell_{n}>0}\prod_{j=1}^n\frac{z_{j}^{\ell_{j}}}{\ell_j^{a_j}}\label{eq:Mpl_defn}\end{align}
 for certain $ a_1,\dots,a_n\in\mathbb Z_{>0}$. All these MPLs themselves arise from (iterated) integrals over products of logarithms and rational functions, such as  \begin{align}
\int_0^z\frac{\log (1-t)}{1-t}\log t\D t=\Li_{2,1}(z,1)-\frac{\log^{2} (1-z)}{2}\log z
\end{align}for $ 0<|z|<1$.
Integrals over MPLs, like \eqref{eq:MPL_int_repn}, are also expressible through a linear combination of MPLs, which in turn, leads to closed-form evaluation of    \eqref{eq:binomH'} for   $\mathscr N\in\{-1,0,1\}$. For $z_1=\dots =z_n=1$ and $ a_1\neq1$, the MPL $ \Li_{a_1,\dots,a_n}(z_1,\dots,z_n)$ becomes a multiple zeta value (MZV) $ \zeta_{a_1,\dots,a_n}$, which generalizes  the Riemann zeta value $\zeta(a)\equiv \zeta_a $ for $ a\in\mathbb Z_{>1}$. From a higher point of view, the MPLs and the MZVs are essentially motivic periods on the moduli spaces of genus-zero curves (namely, rational curves) \cite{Brown2009a,Brown2009arXiv,Brown2009b}.

If we have  $ |\mathscr N|>1$ in   \eqref{eq:binomH'},\footnote{For the borderline cases where $ T=1$ and $ \mathscr N\in\{-2,2\}$ in   \eqref{eq:binomH'}, one is still dealing with rational curves instead of elliptic curves. The corresponding series have been already covered by the methods  in \cite[\S3]{Zhou2022mkMpl}.} then we are obliged to cope with the  moduli spaces of genus-one curves (namely, elliptic curves), in which  the complete elliptic integrals $ \mathbf K\big(\sqrt{t})$ and $ \mathbf K\big(\sqrt{1-t}\big)$ take the places of $\log t$ and $\log(1-t)$.
There is a wealth of  literature on genus-one analogs of MZVs/MPLs, namely elliptic multiple zeta values (EMZVs) \cite{BMMS2015EMZV,BSZ2019,Enriquez2014EMZV,Enriquez2016EMZV,Goncharov1998ellmot,LMS2021EMZV,Nils2020EMZV,ZerbiniThesis} and elliptic multiple polylogarithms (EMPLs) \cite{ABDvHIRRS2018,ABSW2016kiteEPL,AW2018EMPL,AdamsWeinzierl2019,BHMcLvHW2018tt,BDDPT2018EPL,BDDT2018EMPL,BDDT2018EMPL2,BroedelKaderli2020EPL,BrownLevin2013EMPL,Samart2020,WZ2023EMPL}, with diverse notational conventions. In one of these notational systems (see \cite{ABDvHIRRS2018,ABSW2016kiteEPL,AW2018EMPL,BroedelKaderli2020EPL}, for example),  {the elliptic polylogarithm} is defined as\begin{align} \ELi_{n;m}(x;y;q)\colonequals \sum_{j=1}^\infty\sum_{k=1}^\infty\frac{x^jy^k}{j^{n}k^m}q^{jk}=\sum_{j=1}^\infty\frac{x^{j}}{j^{n}}\Li_m\big(y q^j\big)=\sum_{k=1}^\infty\frac{y^{k}}{k^{m}}\Li_n\big(x q^k\big).\label{eq:Eli_defn}
\end{align}This definition   not only echoes   \eqref{eq:Mpl_defn}, but also embodies  the Lambert--Ramanujan series\begin{align}
\ELi_{0;m}(1;1;q)={}&\sum_{k=1}^\infty\frac{1}{k^m}\frac{q^k}{1-q^{k}}
\end{align}as a special case. Since we have \begin{align}
{}&\sum_{k=0}^\infty\frac{(-1)^{k}}{(2k+1)^2}\frac{1}{q^{-2k-1}-1}=\frac{8\ELi_{0;2}(1;i;q)+2\ELi_{0;2}(1;1;q^{2})-\ELi_{0;2}(1;1;q^{4})}{8i},\label{eq:RN2p277'}
\end{align}the right-hand side of \eqref{eq:RN2p277} can be expressed in terms of elliptic polylogarithms.
It appears that for generic settings of   \eqref{eq:binomH'} with   $ |\mathscr N|>1$,  one cannot reduce the corresponding elliptic polylogarithms [cf.\ \eqref{eq:unres}  and \eqref{eq:inv_sqr}] to familiar arithmetic objects like Eichler integrals.

According to   \cite{Zhou2025NotesBHS} and the current work, each provable evaluation of  \eqref{eq:binomH'} for $ \mathscr N\in\{-3,2,3\}$ invokes a product of at least  $ |\mathscr N|-1$ elliptic integrals (either in isolation or as part of an  integrand).
It is possible to derive  closed-form evaluations for certain integrals over the products of four or more elliptic integrals, such as (cf.\ \cite[Table 1]{WanZucker2014}, \cite[(4.18)--(4.19)]{EZF}, and \cite[(1.1)]{Zhou2017Int4Pnu})\begin{align}
\zeta(5)={}&\frac{8}{93}\int_0^1(1-2t)\big[\mathbf K\big(\sqrt{1-t}\big)\big]^4\D t,\label{eq:zeta5_int}\\\zeta(7)={}&\frac{32}{5715}\int_0^1[2-17t(1-t)]\big[\mathbf K\big(\sqrt{1-t}\big)\big]^6\D t.
\end{align}More generally, some special values of the Epstein zeta functions $ E(z,s)$ and the Dirichlet $ L$-functions $ L_d(s)$ (where $ s\in\mathbb Z_{>1}$)  can be represented by integrals whose integrands involve products of $ 2(s-1)$ complete elliptic integrals, such as (cf.\ \cite[(4.29)]{EZF})\begin{align}
\begin{split}&\frac{105L_{-4}(4)}{136 \pi ^4}\\={}&\frac{200025 \zeta (7)}{2176 \pi ^7}-\frac{70}{136 \pi ^7}\int_0^{1/2}[2-17t(1-t)]\big[\mathbf K\big(\sqrt{t}\big)\big]^6\left\{\left[ \frac{\mathbf K\big(\sqrt{1-t}\big)}{\mathbf K\big(\sqrt{t}\big)} \right]^{2}-1\right\}^{3}\D t.
\end{split}
\end{align} Will these integral formulae  shed new light\footnote{Some individual cases of  \eqref{eq:binomH'}  for $ \mathscr N\in\{-7,-5,5,7\}$ have already been  tackled combinatorially \cite{Au2025a,Au2025b,HouHeWang2023,Wei2023c,Wei2023Sun,Wei2023b,WeiXu2023}. As for ``new light'', we are expecting modular parametrizations similar to the current work. } on   \eqref{eq:binomH'}  for $ \mathscr N\in\{-7,-5,5,7\}$? Maybe we will see the answer in the near future.



\end{document}